\newcommandx{\unsure}[2][1=]{\todo[linecolor=red,backgroundcolor=red!25,bordercolor=red,#1]{#2}}
\newcommandx{\info}[2][1=]{\todo[linecolor=OliveGreen,backgroundcolor=OliveGreen!25,bordercolor=OliveGreen,#1]{#2}}
\newtheorem{theorem}{Theorem}[section]
\newtheorem{lemma}[theorem]{Lemma}
\newtheorem{corollary}[theorem]{Corollary}
\newtheorem{conjecture}[theorem]{Conjecture}
\newtheorem{definition}[theorem]{Definition}
\theoremstyle{remark}
\newtheorem{remark}[theorem]{Remark}
\newtheorem{example}[theorem]{Example}
\DeclareMathOperator{\lw}{lw}
\DeclareMathOperator{\conv}{conv}
\DeclareMathOperator{\proj}{Proj}
\DeclareMathOperator{\vol}{vol}
\DeclareMathOperator{\GL}{GL}
\DeclareMathOperator{\AGL}{AGL}
\DeclareMathOperator{\codim}{codim}
\DeclareMathOperator{\coker}{coker}
\DeclareMathOperator{\im}{im}
\newcommand{\RR}{\mathbb{R}}
\newcommand{\ZZ}{\mathbb{Z}}
\newcommand{\PP}{\mathbb{P}}
\newcommand{\interior}{^{(1)}}
\newcommand{\topinterior}{^{\circ}}
\newcommand{\wedgepow}[1]{{\bigwedge}^{\!{#1}}\,}
\begin{document}

\title{Computing graded Betti tables of toric surfaces}
\author{W.\ Castryck, F.\ Cools, J.\ Demeyer, A.\ Lemmens}
\date{}
\maketitle

\begin{abstract}
We present various facts on the graded Betti table of a projectively embedded
toric surface, expressed in terms of the combinatorics of its defining lattice polygon. These facts include explicit formulas for a number of entries, as well as a lower bound on the length
of the linear strand that we conjecture to be sharp (and prove to be so in several special cases).
We also present an algorithm for determining the graded Betti table of a given toric surface by explicitly computing its Koszul cohomology, 
and report on an implementation in SageMath. 
It works well for ambient projective spaces of dimension
up to roughly $25$, depending on the concrete combinatorics, although the current implementation runs in finite characteristic only.
As a main application we obtain the graded Betti table of the Veronese surface $\nu_6(\PP^2) \subseteq \PP^{27}$ in characteristic $40\,009$. This allows us to formulate precise conjectures
predicting what certain entries look like in the case of an arbitrary Veronese surface $\nu_d(\PP^2)$.

\end{abstract}

\small
\tableofcontents

\normalsize

\section{Introduction}


Let $k$ be a field of characteristic $0$ and let $\Delta \subseteq \RR^2$ be a lattice polygon,
by which we mean the convex hull of
a finite number of points of the standard lattice $\ZZ^2$.
We write $\Delta^{(1)}$ for the convex hull of
the lattice points in the interior of $\Delta$. Assume that $\Delta$ is two-dimensional, write $N_\Delta = | \Delta \cap \ZZ^2 |$,
and let $S_\Delta = k[  X_{i,j}  \, | \,  (i,j) \in \Delta \cap \ZZ^2  ]$, so that $\PP^{N_\Delta-1} = \proj S_\Delta$.
The toric surface over $k$ associated to $\Delta$
is the Zariski closure of the image of
\[ \varphi_\Delta : (k^*)^2 \hookrightarrow \mathbb{P}^{ N_\Delta - 1} : (a, b) \mapsto (a^i b^j)_{(i,j) \in \Delta \cap \ZZ^2}. \]
We denote it by $X_\Delta$ and its ideal by $I_\Delta$. 
It has been proved by Koelman~\cite{koelman} that $I_\Delta$ is generated by binomials
of degree $2$ and $3$, where degree $2$ suffices if and only if $| \partial \Delta \cap \ZZ^2 | > 3$. 

Our object of interest is the graded Betti table of $X_\Delta$, which gathers the exponents appearing
in a minimal free resolution 
\[ \dots \rightarrow \bigoplus_{q \geq 2} S_\Delta(-q)^{\beta_{2,q}} \rightarrow \bigoplus_{q \geq 1} S_\Delta(-q)^{\beta_{1,q}} \rightarrow \bigoplus_{q \geq 0} S_\Delta(-q)^{\beta_{0,q}} \rightarrow \faktor{S_\Delta}{I_\Delta} \rightarrow 0 \]
of the homogeneous coordinate ring of $X_\Delta$ as a graded $S_\Delta$-module, obtained by taking syzygies.
Traditionally one writes $\beta_{p,p+q}$ in the $p$th column and the $q$th row. 
Alternatively and often more conveniently,
the Betti numbers $\beta_{p,p+q}$ are the dimensions of the Koszul cohomology spaces $K_{p,q}(X_\Delta,  \mathcal{O}(1))$,  
which will be described in detail in Section~\ref{section_koszulcohomology}.

\begin{remark} If $\Delta$ and $\Delta'$ are lattice polygons,
we say that they are \emph{unimodularly equivalent} (denoted by $\Delta \cong \Delta'$)
if they are obtained from one another using a transformation
from the affine group $\AGL_2(\ZZ)$, that is a map of the form
\[
    \RR^2 \rightarrow \RR^2 : (x,y) \mapsto (x,y) A + (a,b) \text{ with } A \in \GL_2(\ZZ) \text{ and } a,b \in \ZZ.
\]
Unimodularly equivalent polygons yield projectively equivalent toric surfaces,
which have the same graded Betti table.
So we are interested in lattice polygons up to unimodular equivalence only.
\end{remark}

In Section~\ref{section_firstfacts} we prove/gather some first facts on the graded Betti table. To begin with, we show 
that it has the following shape:
\begin{lemma} \label{lemmashapebetti} The graded Betti table of $X_\Delta$ has the form
\begin{equation} \label{toricbetti}
\small
\begin{array}{r|ccccccc}
  & 0 & 1 & 2 & 3 & \dots & N_\Delta-4 & N_\Delta-3 \\
\hline
0 & 1 & 0 & 0 & 0 & \dots & 0 & 0 \\
1 & 0 & b_1 & b_2 & b_3 & \dots & b_{N_\Delta-4} & b_{N_\Delta-3} \\
2 & 0 & c_{N_\Delta-3} & c_{N_\Delta-4} & c_{N_\Delta-5} & \dots & c_2 & c_1 \\
\end{array},
\end{equation}
where omitted entries are understood to be $0$. Moreover $(\forall \ell : c_\ell = 0) \Leftrightarrow \Delta^{(1)} = \emptyset$.
\end{lemma}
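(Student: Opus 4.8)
The plan is to read off the shape of the table \eqref{toricbetti} from two invariants of the graded ring $R := S_\Delta/I_\Delta$: its projective dimension over $S_\Delta$ and its Castelnuovo--Mumford regularity. First I would recall that every two-dimensional lattice polygon is normal (it admits a unimodular triangulation), so that $X_\Delta$ is projectively normal and $R$ equals the normal semigroup ring $k\bigl[\operatorname{cone}(\Delta\times\{1\})\cap\ZZ^3\bigr]=\bigoplus_{m\ge 0}H^0(X_\Delta,\mathcal O(m))$; in particular $R$ is a Cohen--Macaulay domain of Krull dimension $3$ by Hochster's theorem. Since $S_\Delta$ is a polynomial ring in $N_\Delta$ variables, the Auslander--Buchsbaum formula gives $\operatorname{pd}_{S_\Delta}R=N_\Delta-\operatorname{depth}R=N_\Delta-3$, which accounts for the column range and for the last column being nonzero. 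The top row is immediate: $R$ is a cyclic graded $S_\Delta$-module generated in degree $0$, so $\beta_{0,q}=\delta_{0,q}$; and $\beta_{1,1}=\dim_k(I_\Delta)_1=0$ because the Laurent monomials $\{a^ib^j:(i,j)\in\Delta\cap\ZZ^2\}$ are linearly independent, i.e.\ $X_\Delta$ is non-degenerate. A routine minimality argument propagates $\beta_{1,1}=0$ to $\beta_{p,p}=0$ for every $p\ge 1$, and since $\beta_{p,q}=0$ for $q<p$ always, the only possibly nonzero entries lie in rows $q\in\{0,1,2\}$ — provided we also know that row $q$ vanishes for $q\ge 3$, i.e.\ that $\operatorname{reg}R\le 2$.

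To bound the regularity I would pass to local cohomology. As $R$ is Cohen--Macaulay of dimension $3$ we have $H^i_{\mathfrak m}(R)=0$ for $i\ne 3$, so $\operatorname{reg}R=\operatorname{end}\bigl(H^3_{\mathfrak m}(R)\bigr)+3$, where $\operatorname{end}$ denotes the top nonzero graded degree (finite and well defined by Grothendieck non-vanishing). Because $R$ is saturated and equals its own section ring, $H^3_{\mathfrak m}(R)_j\cong H^2\bigl(X_\Delta,\mathcal O(j)\bigr)$ for all $j\in\ZZ$. Now $\mathcal O(1)$ is a very ample line bundle on the complete toric surface $X_\Delta$, so $\mathcal O(j)$ is nef for $j\ge 0$, and Demazure vanishing yields $H^2(X_\Delta,\mathcal O(j))=0$ in that range (this also covers $j=0$). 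Hence $\operatorname{end}\bigl(H^3_{\mathfrak m}(R)\bigr)\le -1$ and $\operatorname{reg}R\le 2$, which completes the description of the shape.

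For the ``moreover'' I would follow the same chain one degree lower. By the shape already established, $(\forall\ell:c_\ell=0)$ is equivalent to the entire row $q=2$ vanishing, i.e.\ to $\operatorname{reg}R\le 1$, i.e.\ to $H^3_{\mathfrak m}(R)_{-1}=H^2(X_\Delta,\mathcal O(-1))=0$. A standard computation of toric cohomology — via Serre duality, $H^2(X_\Delta,\mathcal O(-1))^\vee\cong H^0(X_\Delta,\omega_{X_\Delta}(1))$ — identifies $h^2(X_\Delta,\mathcal O(-1))$ with the number of lattice points in the interior of $\Delta$, which is $0$ precisely when $\Delta^{(1)}=\emptyset$. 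Chaining the equivalences gives $(\forall\ell:c_\ell=0)\Leftrightarrow\Delta^{(1)}=\emptyset$. The only points needing care are infrastructural: confirming that $R$ genuinely is the normal section ring (so that both Cohen--Macaulayness and the translation $H^3_{\mathfrak m}(R)_j=H^2(X_\Delta,\mathcal O(j))$ apply), and invoking the versions of toric Serre duality and vanishing valid when $X_\Delta$ is singular — in particular keeping track that it is the dualizing sheaf $\omega_{X_\Delta}$, not a canonical line bundle, that appears in the last identification. I do not expect any substantive difficulty beyond assembling these standard toric inputs; this is where I would be most careful rather than where I anticipate a genuine obstacle.
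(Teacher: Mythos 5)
Your argument is correct, and its first half coincides with the paper's: Hochster's theorem gives that $S_\Delta/I_\Delta$ is Cohen--Macaulay of Krull dimension $3$, and Auslander--Buchsbaum pins the projective dimension at $N_\Delta-3$; your explicit treatment of the top row via non-degeneracy and minimality is left implicit in the paper but is the standard bookkeeping. Where you diverge is in bounding the regularity and proving the ``moreover'': the paper stays entirely on the Hilbert-function side, noting that $H_{X_\Delta}(d)$ equals the Ehrhart polynomial $\vol(\Delta)d^2+\tfrac{1}{2}|\partial\Delta\cap\ZZ^2|d+1$ for all $d\geq 0$, observing (via Ehrhart reciprocity) that the polynomial and the function first agree at $s=0$ when $\Delta^{(1)}\neq\emptyset$ and at $s=-1$ when $\Delta^{(1)}=\emptyset$, and then quoting \cite[Cor.\,4.8]{eisenbud} for Cohen--Macaulay modules to read off $\operatorname{reg}=2$ resp.\ $1$. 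You instead compute the same invariant (in effect the $a$-invariant of the Ehrhart ring) through local cohomology: $\operatorname{reg}R=\operatorname{end}(H^3_{\mathfrak m}(R))+3$, the identification $H^3_{\mathfrak m}(R)_j\cong H^2(X_\Delta,\mathcal O(j))$ via the section-ring description, Demazure vanishing for $j\geq 0$, and Serre duality plus the interior-lattice-point description of $H^0(\omega_{X_\Delta}(1))$ for $j=-1$. Both routes rest on projective normality of $X_\Delta$ and are essentially two faces of the same reciprocity statement; the paper's version is more elementary and citation-light (no need to worry about duality on the singular surface), while yours makes the geometric meaning of the quadratic strand's nonvanishing transparent ($c_1$ counts interior lattice points) and uses exactly the duality mechanism the paper deploys anyway in Section~\ref{section_koszulcohomology}, at the modest cost of the infrastructural care you yourself flag (dualizing sheaf rather than canonical bundle, CM Serre duality on a singular toric surface). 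No gap.
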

\noindent We also 
provide a closed formula for the antidiagonal differences:
\begin{lemma} \label{diagonalterms}
For $\ell = 1, \dots, N_\Delta - 2$ one has
\[ b_\ell - c_{N_\Delta - 1 - \ell} = \ell {N_\Delta - 1 \choose \ell + 1} - 2 {N_\Delta - 3 \choose \ell - 1} \vol(\Delta) \]
where it is understood that $b_{N_\Delta - 2} = c_{N_\Delta - 2} = 0$.
\end{lemma}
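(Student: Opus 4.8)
The plan is to extract the antidiagonal differences $b_\ell - c_{N_\Delta-1-\ell}$ from the numerator of the Hilbert series of the homogeneous coordinate ring $M := S_\Delta/I_\Delta$, and then to compute that numerator via Ehrhart theory.

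First I would record that, from the minimal free resolution,
\[
  \sum_{t\ge 0}\bigl(\dim_k M_t\bigr)\,z^t \;=\; \frac{K(z)}{(1-z)^{N_\Delta}},\qquad
  K(z):=\sum_{p,q}(-1)^p\beta_{p,q}\,z^q.
\]
By Lemma~\ref{lemmashapebetti} the only nonzero Betti numbers are $\beta_{0,0}=1$, $\beta_{p,p+1}=b_p$ and $\beta_{p,p+2}=c_{N_\Delta-2-p}$; hence the coefficient of $z^{\ell+1}$ in $K(z)$ equals $(-1)^{\ell}b_\ell+(-1)^{\ell-1}c_{N_\Delta-1-\ell}=(-1)^{\ell}\bigl(b_\ell-c_{N_\Delta-1-\ell}\bigr)$, and one checks that the extreme cases $\ell=1$ and $\ell=N_\Delta-2$ reproduce precisely the conventions $b_{N_\Delta-2}=c_{N_\Delta-2}=0$. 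So it is enough to pin down the coefficients of $z^2,\dots,z^{N_\Delta-1}$ of $K(z)$, and for that I would compute $K(z)$ in closed form.

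The key input is that $X_\Delta$ is projectively normal: every two-dimensional lattice polygon is normal (has the integer decomposition property), so the $t$-fold Minkowski sum of $\Delta\cap\ZZ^2$ is all of $t\Delta\cap\ZZ^2$, and therefore $\dim_k M_t=|t\Delta\cap\ZZ^2|$ equals the Ehrhart polynomial $L_\Delta(t)=\vol(\Delta)\,t^2+\tfrac12|\partial\Delta\cap\ZZ^2|\,t+1$ for every $t\ge 0$, not just asymptotically. Using Pick's formula in the shape $\tfrac12|\partial\Delta\cap\ZZ^2|=N_\Delta-1-\vol(\Delta)$ and summing the series, one gets $\sum_t L_\Delta(t)z^t=h(z)/(1-z)^3$ with $h(z)=1+(N_\Delta-3)z+\bigl(2\vol(\Delta)-N_\Delta+2\bigr)z^2$, and consequently
\[
  K(z)=(1-z)^{N_\Delta}\cdot\frac{h(z)}{(1-z)^3}=h(z)\,(1-z)^{N_\Delta-3}.
\]

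Finally I would expand $h(z)(1-z)^{N_\Delta-3}$ with the binomial theorem and read off the coefficient of $z^{\ell+1}$; together with the second paragraph this gives
\[
  b_\ell-c_{N_\Delta-1-\ell}
  =-\binom{N_\Delta-3}{\ell+1}+(N_\Delta-3)\binom{N_\Delta-3}{\ell}+(N_\Delta-2)\binom{N_\Delta-3}{\ell-1}-2\vol(\Delta)\binom{N_\Delta-3}{\ell-1},
\]
after which it only remains to check the identity $-\binom{N_\Delta-3}{\ell+1}+(N_\Delta-3)\binom{N_\Delta-3}{\ell}+(N_\Delta-2)\binom{N_\Delta-3}{\ell-1}=\ell\binom{N_\Delta-1}{\ell+1}$, which is routine (repeated Pascal). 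The only step that is not bookkeeping is the projective normality of $X_\Delta$; it is classical, but it is exactly what guarantees there is no low-degree correction term distinguishing $\sum_t(\dim_k M_t)z^t$ from $h(z)/(1-z)^3$ — such a term would corrupt precisely the coefficients of $K(z)$ that we need — so that is the point I would treat most carefully.
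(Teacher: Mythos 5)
Your proposal is correct. It is, however, packaged differently from the paper's argument: the paper computes $b_\ell - c_{N_\Delta-1-\ell}$ as the Euler characteristic of the weight-$(\ell+1)$ strand of the Koszul complex, i.e.\ as $\sum_{j=0}^{\ell+1}(-1)^{j+1}\binom{N_\Delta}{\ell+1-j}N_{j\Delta}$, then substitutes the Ehrhart polynomial via Pick's theorem and finishes with a binomial identity; you instead read the same alternating sum off the numerator $K(z)=\sum_{p,q}(-1)^p\beta_{p,q}z^q$ of the Hilbert series, compute that numerator in closed form as $h(z)(1-z)^{N_\Delta-3}$ with $h(z)=1+(N_\Delta-3)z+(2\vol(\Delta)-N_\Delta+2)z^2$, and extract the coefficient of $z^{\ell+1}$. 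The two routes are equivalent term by term (your resolution identity for the Hilbert series is the paper's fact that Euler characteristics can be computed on the complex or on its cohomology), and both rest on the same inputs: the shape of the Betti table from Lemma~\ref{lemmashapebetti} (so that only $b_\ell$ and $c_{N_\Delta-1-\ell}$ survive in degree $\ell+1$, with the stated conventions at $\ell=1$ and $\ell=N_\Delta-2$), the fact that the Hilbert function equals the Ehrhart polynomial $\vol(\Delta)t^2+\tfrac12|\partial\Delta\cap\ZZ^2|t+1$ for all $t\geq 0$ (normality of lattice polygons, which the paper also invokes and which you rightly single out as the one substantive input), Pick's theorem, and a routine Pascal-type identity at the end; I checked your final identity $-\binom{N_\Delta-3}{\ell+1}+(N_\Delta-3)\binom{N_\Delta-3}{\ell}+(N_\Delta-2)\binom{N_\Delta-3}{\ell-1}=\ell\binom{N_\Delta-1}{\ell+1}$ and it does follow by expanding $\binom{N_\Delta-1}{\ell+1}$ twice by Pascal and using absorption. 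What your version buys is a slightly more self-contained derivation (no need to pass through Koszul cohomology, only the definition of the Betti table via a minimal free resolution) and a closed form for the whole $K$-polynomial at once; what the paper's version buys is that the same bookkeeping immediately refines to the bigraded statement \eqref{diagonaltermsgraded}, which is used later in the algorithm.
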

\noindent This reduces the determination of the graded Betti numbers to that of the $b_i$'s (or of the $c_i$'s).
Finally we give explicit formulas for
the entries $b_1$, $b_2$, and $b_{N_\Delta - 4}$, $b_{N_\Delta - 3}$,
which then also yield explicit descriptions of
$c_1$, $c_2$, $c_3$, and $c_{N_\Delta - 3}$. The precise statements are a bit lengthy and can be found in Section~\ref{subsection_explicitformulas}.

We mentioned Koelman's result on the generators of $I_\Delta$: this was vastly generalized in the Ph.D.\ thesis
of Hering \cite[Thm.\,IV.20]{heringphd}, building on an observation due to Schenck \cite{schenck}
and invoking a theorem of Gallego--Purnaprajna \cite[Thm.\,1.3]{gallego}. She provided a combinatorial interpretation
for the number of leading zeroes in the quadratic strand (the row $q=2$).
\begin{theorem}[Hering, Schenck] \label{heringschenckthm}
If $\Delta^{(1)} \neq \emptyset$ then $\min \{ \, \ell \, | \, c_{N_\Delta-\ell} \neq 0 \, \} = | \partial \Delta \cap \ZZ^2 |$,
where $\partial \Delta$ denotes the boundary of $\Delta$.
\end{theorem}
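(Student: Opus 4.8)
The plan is to cut $X_\Delta$ down to a general hyperplane section and reduce the statement to Green-type facts about an embedded curve. The key point is that $X_\Delta$ is arithmetically Cohen--Macaulay: its homogeneous coordinate ring $R = S_\Delta/I_\Delta$ is the affine semigroup ring of the cone over $\Delta$, which is normal (every lattice polygon has the integer decomposition property in dimension two) and hence Cohen--Macaulay by Hochster's theorem. So a general linear form $\ell$ is a nonzerodivisor on $R$; reducing a minimal graded free resolution of $R$ modulo $\ell$ gives one of $R/\ell R$, and therefore $K_{p,q}(X_\Delta,\mathcal{O}(1)) \cong K_{p,q}(C,\mathcal{O}_C(1))$ for all $p,q$, where $C = X_\Delta \cap H \subseteq \PP^{N_\Delta-2}$ is a general hyperplane section, automatically embedded by the complete linear system $|\mathcal{O}_C(1)|$. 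By Bertini $C$ is integral and not contained in the toric boundary, and comparing Hilbert polynomials via Pick's theorem shows that it has arithmetic genus $g := |\Delta^{(1)}\cap\ZZ^2| \geq 1$, degree $\deg\mathcal{O}_C(1) = 2\vol(\Delta) = 2g+b-2$, and $h^0(C,\mathcal{O}_C(1)) = N_\Delta-1 = g+b-1$, where $b := |\partial\Delta\cap\ZZ^2|$.

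By the shape of the Betti table (Lemma~\ref{lemmashapebetti}), the asserted equality $\min\{\ell : c_{N_\Delta-\ell}\neq 0\} = b$ says precisely that $X_\Delta$ satisfies property $N_p$ exactly for $p \leq b-3$. Through the isomorphism above this splits into: (i) $K_{p,2}(C,\mathcal{O}_C(1)) = 0$ for $p \leq b-3$; and (ii) $K_{b-2,2}(C,\mathcal{O}_C(1)) \neq 0$. Statement~(i) is immediate from Green's $(2g{+}1{+}p)$-theorem, since $\deg\mathcal{O}_C(1) = 2g+b-2 = 2g+1+(b-3)$: this is Schenck's observation. Alternatively (i) can be deduced directly on the surface from the Gallego--Purnaprajna theorem, once one exhibits the Minkowski-type decomposition of $\Delta$ that its hypotheses demand.

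For (ii) I would use Green's duality for the arithmetically Cohen--Macaulay curve $C$, formulated through the canonical module (as $C$ need not be Gorenstein): it identifies $K_{b-2,2}(C,\mathcal{O}_C(1))^{\vee}$ with $K_{g-1,0}(C,\omega_C;\mathcal{O}_C(1))$, and then it suffices to write down one nonzero element of the latter. Set $L = \mathcal{O}_C(1)$. By adjunction $\omega_C \cong \omega_{X_\Delta}(1)|_C$, so $L \otimes \omega_C^{\vee} \cong \mathcal{O}_{X_\Delta}(-K_{X_\Delta})|_C$; since $-K_{X_\Delta} = \sum_\rho D_\rho$ is effective and $C$ is not contained in its support, restricting the tautological section gives a nonzero --- hence, as $C$ is integral, injective --- map $s\colon \omega_C \to L$, so multiplication by $s$ embeds the $g$-dimensional space $V := H^0(C,\omega_C)$ into $H^0(C,L)$. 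Choosing a basis $v_1,\dots,v_g$ of $V$, the Koszul-type element
\[
  \xi \;=\; \sum_{j=1}^{g}(-1)^j\,(sv_1)\wedge\cdots\wedge\widehat{(sv_j)}\wedge\cdots\wedge(sv_g)\otimes v_j \ \in\ \wedgepow{g-1}H^0(L)\otimes H^0(\omega_C)
\]
is nonzero (the $sv_i$ are linearly independent) and is killed by the Koszul differential into $\wedgepow{g-2}H^0(L)\otimes H^0(\omega_C\otimes L)$, because $(sv_i)\cdot v_j = (sv_j)\cdot v_i$ there; hence $0 \neq [\xi] \in K_{g-1,0}(C,\omega_C;L)$. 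For $g=1$ this degenerates to $K_{0,0}(C,\omega_C;L) = H^0(\omega_C) \neq 0$, which matches $c_1 = g \neq 0$.

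The step I expect to be the real obstacle is not the non-vanishing --- the construction above is uniform in $g$ --- but the technical bookkeeping forced by the singularities of $X_\Delta$, which lie along its toric boundary: one must carefully justify the Cohen--Macaulay property, the value of $p_a(C)$, the identification of the graded pieces of the canonical module of $R/\ell R$ with $H^0(\omega_C(m))$, and the precise forms of Green's $(2g{+}1{+}p)$-theorem and of Green duality in this non-Gorenstein arithmetically Cohen--Macaulay setting. All of this is available in the literature, but assembling it is the genuine work; a less delicate alternative is to argue throughout with the Artinian reduction of $R$, or on a toric resolution of $X_\Delta$, at the cost of re-deriving the comparison isomorphisms for Koszul cohomology.
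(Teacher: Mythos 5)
The paper itself gives no proof of this statement --- it is quoted from Hering's thesis, which builds on Schenck's hyperplane-section observation --- and your argument reconstructs essentially that same route: Cohen--Macaulayness to pass to a general hyperplane section $C$ with $K_{p,q}(X_\Delta,\mathcal{O}(1)) \cong K_{p,q}(C,\mathcal{O}_C(1))$, of genus $g = N_{\Delta^{(1)}}$ and degree $2g + |\partial\Delta\cap\ZZ^2| - 2$, then Green's $(2g{+}1{+}p)$-theorem for the vanishing range and duality plus an explicit Koszul class (precisely the Green--Lazarsfeld nonvanishing construction, with the section of $-K_{X_\Delta}$ supplying the map $\omega_C \to L$) for the failure of $N_{b-2}$; the argument is correct, with the indices, the cycle condition and the vanishing of the preceding term $H^0(\omega_C \otimes L^{-1})$ all checking out. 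One simplification you could make: in characteristic $0$ a general hyperplane misses the finitely many singular points of $X_\Delta$, so $C$ is a smooth projective curve and the non-Gorenstein/canonical-module precautions in your last paragraph are unnecessary.
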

\noindent In Green's language of property $N_p$, this reads that 
$S_\Delta / I_\Delta$ satisfies $N_p$ if and only if $| \partial \Delta \cap \ZZ^2 | \geq p + 3$. Hering's thesis contains several other statements 
of property $N_p$ type for toric varieties of any dimension.

In Section~\ref{section_Kp1} we work towards a similar combinatorial expression for the number
of zeroes at the end of the linear strand (the row $q=1$). We are unable
to provide a definitive answer, but we formulate a concrete conjecture that we can prove in many special cases.
The central combinatorial notion is the following:
\begin{definition}
Let $\Delta$ be a lattice polygon.
If $\Delta \neq \emptyset$, then the \emph{lattice width} of $\Delta$, denoted $\lw(\Delta)$,
is the minimal height $d$ of a horizontal strip $\RR \times [0,d]$ in which $\Delta$ can
be mapped using a unimodular transformation.
If $\Delta = \emptyset$, we define $\lw(\Delta) = -1$.
\end{definition}
Remark that $\lw(\Delta) = 0$ if and only if $\Delta$ is zero- or one-dimensional.
The lattice width can be computed recursively; see \cite[Thm.\,4]{CaCo} or \cite[Thm.\,13]{lubbesschicho}: if $\Delta$ is two-dimensional then
\[ \lw(\Delta) = \left\{ \begin{array}{ll} \lw(\Delta^{(1)}) + 3 & \text{if $\Delta \cong d \Sigma$ for some $d \geq 2$,} \\ \lw(\Delta^{(1)}) + 2 & \text{if not}, \end{array} \right. \]
where $\Sigma := \conv \{ (0,0), (1,0), (0,1) \}$.

The multiples of $\Sigma$, whose associated toric surfaces are the Veronese surfaces (more precisely $X_{d\Sigma}$ is
the image of $\PP^2$ under the $d$-uple embedding $\nu_d$), will keep playing a special role throughout the rest of this paper. Another important role is attributed to 
multiples of $\Upsilon = \conv \{ (-1,-1), (1,0), (0,1) \}$.
Finally we also introduce the polygons $\Upsilon_d = \conv  \{ (-1,-1), (d,0), (0,d) \}$, where we note that $\Upsilon_1 = \Upsilon$. For the sake of overview,
these polygons are depicted in Figure~\ref{overviewpols}, along with some elementary combinatorial properties.
\begin{figure}[thb]
 \begin{tikzpicture}
   \draw [thick] (0,0) -- (2,0) -- (0,2) -- (0,0);
   \node at (-0.2,-0.25) {\small $(0,0)$};
   \node at (2.2,-0.25) {\small $(d,0)$};
   \node at (-0.2,2.2) {\small $(0,d)$};
   \node at (0.6,0.6) {\small $d \Sigma$};
   \node at (1,-1) {\small $\lw(d\Sigma) = d$};
   \node at (1,-1.5) {\small $(d\Sigma)^{(1)} \cong (d-3)\Sigma$};  
 \end{tikzpicture}
 \qquad \quad 
 \begin{tikzpicture}
   \draw [thick] (-1,-1) -- (1,0) -- (0,1) -- (-1,-1);
   \node at (-1,-1.2) {\small $(-d,-d)$};
   \node at (1.5,0) {\small $(d,0)$};
   \node at (0,1.2) {\small $(0,d)$};
   \node at (0,0) {\small $d \Upsilon$};
   \node at (0,-2) {\small $\lw(d\Upsilon) = 2d$};
   \node at (0,-2.5) {\small $(d\Upsilon)^{(1)} \cong (d-1)\Upsilon$};  
 \end{tikzpicture}
 \qquad \quad 
 \begin{tikzpicture}
   \draw [thick] (-0.333,-0.333) -- (1.666,0) -- (0,1.666) -- (-0.333,-0.333);
   \node at (-0.333,-0.5333) {\small $(-1,-1)$};
   \node at (0.4,0.4) {\small $\Upsilon_d$};
   \node at (0.7,-1.333) {\small $\lw(\Upsilon_d) = d + 1$};
   \node at (0.7,-1.833) {\small $\Upsilon_d^{(1)} \cong (d-1)\Sigma$};
   \node at (0,1.866) {\small $(0,d)$};
   \node at (2.166,0) {\small $(d,0)$};
 \end{tikzpicture}
\caption{Three recurring families of polygons} \label{overviewpols}
\end{figure}
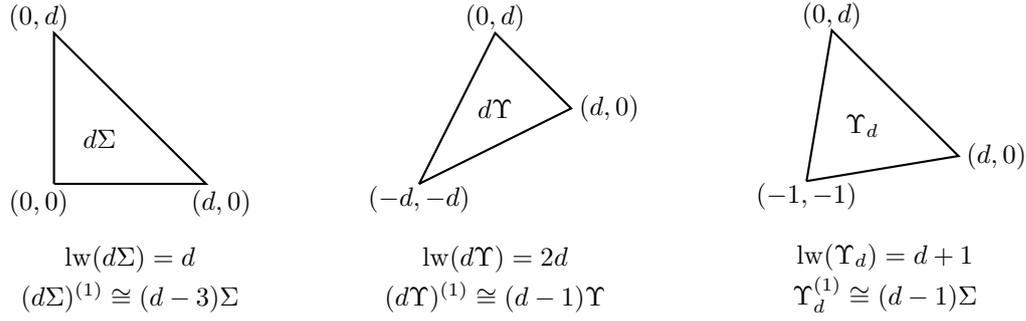

Our conjecture is as follows:
\begin{conjecture} \label{Kp1conjecture}
If $\Delta \not \cong \Sigma, \Upsilon$ then one has $\min \{ \, \ell \, | \, b_{N_\Delta-\ell} \neq 0 \, \} = \lw(\Delta) + 2$, unless
\[ \Delta \cong d\Sigma \text{ for some $d \geq 2$} \ \quad \ \text{or} \ \quad \ \Delta \cong \Upsilon_d \text{ for some $d \geq 2$} \ \quad \ \text{or} \ \quad \ \Delta \cong 2 \Upsilon \]
in which case it is $\lw(\Delta) + 1$.
\end{conjecture}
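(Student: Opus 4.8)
We sketch a strategy toward Conjecture~\ref{Kp1conjecture}, which we can carry out in several cases and which pinpoints the general difficulty. Write $b_p = \dim_k K_{p,1}(X_\Delta,\mathcal{O}(1))$. The claimed equality is the conjunction of two statements: the non-vanishing of $b_{N_\Delta-\lw(\Delta)-2}$ (resp.\ of $b_{N_\Delta-\lw(\Delta)-1}$ in the exceptional families), which is the lower bound on the length of the linear strand established earlier in Section~\ref{section_Kp1}; and the complementary \emph{vanishing} of $b_p$ for $p$ past that column. The plan is to concentrate on the vanishing.

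The main step is a reduction to a curve. Let $C = X_\Delta \cap H$ be a general hyperplane section: a general $\Delta$-non-degenerate curve of genus $g = |\Delta^{(1)}\cap\ZZ^2|$, linearly normally embedded in $\PP^{N_\Delta-2}$ by $\mathcal{O}(1)|_C$, whose degree is $2\vol(\Delta) = 2g + |\partial\Delta\cap\ZZ^2| - 2$. Since $X_\Delta$ is projectively normal (every plane lattice polygon being normal) and $K_{p,0}(X_\Delta,\mathcal{O}(1)) = 0$ for $p\geq 1$, the restriction exact sequence in Koszul cohomology gives an injection $K_{p,1}(X_\Delta,\mathcal{O}(1)) \hookrightarrow K_{p,1}(C,\mathcal{O}(1)|_C)$ for all $p\geq 1$, so it suffices to bound the length of the linear strand of $C$. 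When $\Delta$ is not in the exceptional list and $|\partial\Delta\cap\ZZ^2|$ is large enough relative to $g$, this is exactly what the gonality theorem for curves (Ein--Lazarsfeld, in Rathmann's effective form) delivers: $K_{p,1}(C,\mathcal{O}(1)|_C) = 0$ once $p \geq h^0(\mathcal{O}(1)|_C) - (\text{gonality of } C) = N_\Delta-1-\lw(\Delta)$, where we have used the combinatorial fact that the gonality of a general $\Delta$-non-degenerate curve equals $\lw(\Delta)$; this yields $b_p = 0$ for $p > N_\Delta-\lw(\Delta)-2$, as predicted.

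For the remaining polygons — the exceptional families $d\Sigma$, $\Upsilon_d$, $2\Upsilon$, and all $\Delta$ for which the degree $2\vol(\Delta)$ is too small for the curve gonality theorem (equivalently, $|\partial\Delta\cap\ZZ^2|$ is small) — one would proceed on two fronts. First, in the exceptional families the hyperplane section $C$ is itself special — a smooth plane curve, or a curve lying on a surface of minimal degree — so its linear strand is one longer than its gonality alone would force; here one needs the extra non-vanishing class on $X_\Delta$ constructed by hand (for $d\Sigma$ from the catalecticant presentation of the Veronese ideal, for $\Upsilon_d$ and $2\Upsilon$ from analogous determinantal or blow-up descriptions), together with a matching vanishing obtained from a refined, plane-curve-adapted version of the gonality theorem applied to $C$. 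Second, for polygons with few boundary points one falls back on the intrinsic multigraded description $K_{p,1}(X_\Delta,\mathcal{O}(1)) = \bigoplus_{m\in\ZZ^2} K_{p,1}(X_\Delta,\mathcal{O}(1))_m$ from Section~\ref{section_koszulcohomology} and argues by induction along $\Delta \mapsto \Delta^{(1)}$, using the lattice-width recursion $\lw(\Delta) = \lw(\Delta^{(1)})+2$ or $+3$ and the duality between the linear strand of $X_\Delta$ and the quadratic strand controlled by $\Delta^{(1)}$ (Lemma~\ref{diagonalterms}, Theorem~\ref{heringschenckthm}): an exact sequence peeling off a boundary lattice point, or linking $S_\Delta/I_\Delta$ to the ring of $\Delta^{(1)}$, squeezes each piece $K_{p,1}(X_\Delta)_m$ between Koszul cohomology of $X_{\Delta^{(1)}}$ and of lower-dimensional boundary strata. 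The base cases are $\Delta^{(1)}=\emptyset$, where the Betti table is already determined by Lemma~\ref{lemmashapebetti} and Lemma~\ref{diagonalterms}, and the finitely many polygons of small $\lw(\Delta)$, which can be disposed of by hand or by the algorithm of the later sections.

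The main obstacle is the positivity threshold in the gonality theorem: the degree $2\vol(\Delta) = 2g + |\partial\Delta\cap\ZZ^2| - 2$ of $\mathcal{O}(1)|_C$ need not lie in the range where the gonality conjecture for curves is known, and there is no uniform replacement — no vanishing theorem for the multigraded Koszul homology of an arbitrary lattice polygon — that would close the induction in general. This is exactly the regime of the exceptional families and, more broadly, of polygons with small $|\partial\Delta\cap\ZZ^2|$, where the hyperplane section carries linear syzygies beyond what its gonality predicts; pinning those down seems to require either a strengthening of the gonality theorem adapted to $\Delta$-non-degenerate (in particular plane) curves or a genuinely new combinatorial vanishing statement, which is why the conjecture remains open in general.
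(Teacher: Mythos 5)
First, a point of framing: the statement you set out to prove is Conjecture~\ref{Kp1conjecture}, which the paper itself does not prove. What the paper actually establishes is Theorem~\ref{theoremKp1progress}: the upper bound $\min\{\ell \mid b_{N_\Delta-\ell}\neq 0\}\leq \lw(\Delta)+2$ (sharpened to $\lw(\Delta)+1$ in the exceptional families) via the ambient rational normal scroll and explicit Koszul cycles (Lemmas~\ref{Kp1upperbound}, \ref{dSigmaupperbound}, \ref{Upsilonupperbound}, \ref{Upsilondupperbound}), and the complementary vanishing only in special cases: $N_\Delta\leq 32$ by exhaustive computation (Section~\ref{section_computing}), $\lw(\Delta)\leq 6$ via the pruning result (Theorem~\ref{deformthm}, Corollary~\ref{deformcor}) combined with the classification of minimal polygons \cite{coolslemmens}, and a conditional statement ($\dagger$) resting on Green's canonical syzygy conjecture for curves on $X_\Delta$, deliberately deferred to \cite{nextpaper}. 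Your proposal is likewise not a proof and says so; judged as a strategy, its non-vanishing half simply re-uses the paper's Section~\ref{section_Kp1}, while its vanishing half is genuinely different from anything carried out in this paper.

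On that vanishing half: passing to a general hyperplane section $C\in|L_\Delta|$ is legitimate and in fact stronger than you claim --- since $S_\Delta/I_\Delta$ is Cohen--Macaulay and $H^1(X,qL)=0$ for $q\geq 0$, the graded Betti table of $C$, embedded by the complete linear system $\mathcal{O}(1)|_C$ with $h^0=N_\Delta-1$, coincides with that of $X_\Delta$, not merely receives an injection. Your route through the Ein--Lazarsfeld/Rathmann gonality theorem plus $\mathrm{gon}(C)=\lw(\Delta)$ from \cite{CaCo} is close in spirit to the curve-based approach the authors keep out of this paper (the $\dagger$ item, which instead goes through Green's conjecture and Lelli-Chiesa \cite{lellichiesa}), and it does ``explain'' part of the exceptional list, since $d\Sigma$ and $2\Upsilon$ are exactly the polygons where the gonality drops to $\lw(\Delta)-1$. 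But the concrete gap is the one you name and then do not bridge: $\deg \mathcal{O}(1)|_C = 2N_{\Delta^{(1)}}-2+|\partial\Delta\cap\ZZ^2|$, whereas the effective gonality theorem requires degree of order $4g$ (or strong $H^1$-vanishing hypotheses), so unconditionally you cover only polygons whose boundary lattice points outnumber roughly twice the interior ones --- a thin slice disjoint from the hard cases. Your ``second front'', an induction along $\Delta\mapsto\Delta^{(1)}$ peeling off boundary points, is only a hope: no exact sequence relating $K_{p,1}(X_\Delta)$ to Koszul cohomology of $X_{\Delta^{(1)}}$ is exhibited, and note that the paper's own reduction moves sideways rather than inward --- Theorem~\ref{deformthm} prunes a vertex at fixed lattice width and propagates vanishing from the smaller polygon to the larger one, reducing the conjecture to minimal polygons, which is precisely what yields the $\lw(\Delta)\leq 6$ cases. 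Two further small corrections: the gonality of a general $\Delta$-nondegenerate curve equals $\lw(\Delta)$ only up to the exceptions $d\Sigma$ and $2\Upsilon$ (harmless here, since you exclude them), and for the exceptional families the extra non-vanishing class you propose to build from catalecticant or determinantal presentations is already available unconditionally in the paper via the explicit cycles of Lemmas~\ref{dSigmaupperbound}--\ref{Upsilondupperbound}, so nothing new is needed there; what is missing, both in your sketch and in the paper, is the matching vanishing in the exceptional and low-boundary regimes, and that is exactly why the statement remains a conjecture.
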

\noindent In other words we conjecture that the number of zeroes at the end of the linear strand equals $\lw(\Delta) - 1$, unless
$\Delta$ is of the form $d \Sigma$, $\Upsilon_d$ or $2\Upsilon$, in which case it equals $\lw(\Delta) - 2$.

\begin{remark} The excluded cases $\Delta \cong \Sigma, \Upsilon$ are pathological: 
the Betti tables are
\[  \small \begin{array}{r|cc}
  & 0 \\
\hline
0 & 1 \\
1 & 0 \\
2 & 0 \\  
\end{array}
\qquad \text{resp.} \qquad
\begin{array}{r|cc}
  & 0 & 1 \\
\hline
0 & 1 & 0 \\
1 & 0 & 0 \\
2 & 0 & 1 \\  
\end{array}, \]
i.e.\ the entire linear strands are zero. \end{remark}

As explained in Section~\ref{section_Kp1} the upper bound $\min \{ \, \ell \, | \, b_{N_\Delta-\ell} \neq 0 \, \} \leq \lw(\Delta) + 2$
follows from the fact that our toric surface $X_\Delta$ is naturally contained
in a rational normal scroll of dimension $\lw(\Delta) + 1$, which is known to have 
non-zero linear syzygies up to
column $p = N_\Delta - \lw(\Delta) - 2$. Then also $X_\Delta$ must have non-zero linear syzygies up to that point,
yielding the desired bound. Thus another way of reading Conjecture~\ref{Kp1conjecture}
is that the natural bound coming from this ambient rational normal scroll is usually sharp.
This is in the philosophy of Green's $K_{p,1}$ theorem \cite[Thm.\,3.31]{nagelaprodu} that
towards the end of the resolution, `most' linear syzygies must come from the smallest ambient variety of minimal degree. 
 In the exceptional cases $d \Sigma$, $\Upsilon_d$ and $2\Upsilon$ we
can prove the sharper bound $\min \{ \, \ell \, | \, b_{N_\Delta-\ell} \neq 0 \, \} \leq \lw(\Delta) + 1$ by following a slightly different 
argument, using explicit computations in Koszul cohomology.


We can prove sharpness of these bounds in a considerable number of special situations, overall
leading to the following partial
result towards Conjecture~\ref{Kp1conjecture}.

\begin{theorem} \label{theoremKp1progress}
If $\Delta \not \cong \Sigma, \Upsilon$ then one has $\min \{ \, \ell \, | \, b_{N_\Delta-\ell} \neq 0 \, \} \leq \lw(\Delta) + 2$. If
\[ \Delta \cong d\Sigma \text{ for some $d \geq 2$} \ \quad \ \text{or} \ \quad \ \Delta \cong \Upsilon_d \text{ for some $d \geq 2$} \ \quad \ \text{or} \ \quad \ \Delta \cong 2 \Upsilon \]
then moreover one has the sharper bound $\lw(\Delta) + 1$. In other words the sharpest applicable upper bound predicted
by Conjecture~\ref{Kp1conjecture} holds. Moreover:
\begin{itemize}
  \item If $N_\Delta \leq 32$ then the bound is met.
  \item If a certain non-exceptional lattice polygon $\Delta$ (i.e.\ not of the form $d \Sigma, \Upsilon_d, 2\Upsilon$) meets the bound then so does 
  every lattice polygon containing
  $\Delta$ and having the same lattice width.
    In particular if $\lw(\Delta) \leq 6$ then the bound is met.
  
  \item[$\dagger$] If $\Delta = \Gamma^{(1)}$ for some larger lattice polygon $\Gamma$ and if Green's canonical syzygy conjecture
  holds for smooth curves on $X_\Delta$ (known to be true if $H^0(X_\Delta, -K_{X_\Delta}) \geq 2$) then the bound is met.
  
\end{itemize}
\end{theorem}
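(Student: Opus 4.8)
The plan is to separate the two upper‑bound statements from the two mechanisms for sharpness.

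\textbf{Upper bounds.} For $\min\{\ell\mid b_{N_\Delta-\ell}\neq 0\}\le\lw(\Delta)+2$ I would use the rational normal scroll $S$ of dimension $\lw(\Delta)+1$ containing $X_\Delta$: having minimal degree, its resolution is linear (Eagon--Northcott) of length $\codim S=N_\Delta-\lw(\Delta)-2$, so $K_{p,1}(S,\mathcal O(1))\neq 0$ for $1\le p\le N_\Delta-\lw(\Delta)-2$. Writing $R=S_\Delta$ and applying the long exact Koszul sequence to $0\to I_\Delta/I_S\to R/I_S\to R/I_\Delta\to 0$, the decisive point is that $I_\Delta/I_S$ is generated in degrees $\ge 2$ (since $I_\Delta$ is, by \cite{koelman}, and $I_S\subseteq I_\Delta$), so its whole linear strand vanishes and $K_{p,1}(R/I_S)\hookrightarrow K_{p,1}(R/I_\Delta)$ for all $p$; taking $p=N_\Delta-\lw(\Delta)-2$ gives the bound. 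For the three exceptional families the scroll only gives $\lw(\Delta)+2$, so to reach $\lw(\Delta)+1$ I would instead exhibit, in the combinatorial model of $K_{\bullet,\bullet}(X_\Delta,\mathcal O(1))$ of Section~\ref{section_koszulcohomology}, an explicit nonzero cocycle in $K_{N_\Delta-\lw(\Delta)-1,\,1}(X_\Delta,\mathcal O(1))$; the self‑similarity of $d\Sigma$, $\Upsilon_d$ and $2\Upsilon$ under $\Delta\mapsto\Delta^{(1)}$ (Figure~\ref{overviewpols}) should let one build such a class recursively or transport it from a smaller member of the family.

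\textbf{Sharpness, computationally and by propagation.} Up to unimodular equivalence there are only finitely many lattice polygons with $N_\Delta\le 32$; for each I would compute the Betti table with the algorithm of this paper in a suitable finite characteristic. Because the toric ideal is defined over $\ZZ$ and $S_\ZZ/I_{\Delta,\ZZ}$ is $\ZZ$‑free, the graded Betti numbers are upper semicontinuous over $\spec\ZZ$, so a vanishing $b_{N_\Delta-\ell}=0$ observed in characteristic $p$ forces it in characteristic $0$; together with the scroll non‑vanishing this pins down $\min\{\ell\mid b_{N_\Delta-\ell}\neq 0\}$. For propagation, take non‑exceptional $\Delta\subseteq\Gamma$ with $\lw(\Gamma)=\lw(\Delta)=:w$ and $b^\Delta_{N_\Delta-w-1}=0$, pick a chain $\Delta=\Delta_0\subsetneq\dots\subsetneq\Delta_k=\Gamma$ that adds one lattice point at a time while keeping the lattice width equal to $w$, and show the vanishing survives each step. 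Comparing the scroll sequences of $\Delta_i$ and $\Delta_{i+1}$ (both scrolls have dimension $w+1$, hence linear strands stopping at the same distance from the end), one must check that the single new coordinate contributes, in the degree governing the tail of the linear strand, only Koszul classes already accounted for by the scroll; the combinatorial description of $K_{p,1}$, in which adding a lattice point merely enlarges an explicit complex, is where I would make this precise. A flatness‑type fact---each lattice polygon of width $w$ contains a sub‑polygon of the same width with a number of lattice points bounded in terms of $w$---then places, for $w\le 6$, such a sub‑polygon inside the computed range, and propagation delivers the bound for all $\lw(\Delta)\le 6$.

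\textbf{Sharpness via canonical curves.} For the $\dagger$ case, write $\Delta=\Gamma^{(1)}$ and let $C$ be a general member of $|\mathcal O_{X_\Gamma}(1)|$. Toric adjunction gives $K_C=(K_{X_\Gamma}+\mathcal O_{X_\Gamma}(1))|_C$, and $H^0(X_\Gamma,K_{X_\Gamma}+\mathcal O_{X_\Gamma}(1))$ is spanned by the interior lattice points of $\Gamma$, i.e.\ by $\Delta\cap\ZZ^2$; hence the canonical model of $C$ sits in $\PP^{N_\Delta-1}$, has image on $X_\Delta$, and $g(C)=N_\Delta$. Using the restriction sequence for $C\hookrightarrow X_\Delta$ and the intermediate‑cohomology vanishing on the projectively normal, arithmetically Cohen--Macaulay surface $X_\Delta$, I would identify $K_{p,1}(X_\Delta,\mathcal O(1))$ with $K_{p,1}(C,K_C)$ in the range that matters. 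Green's canonical syzygy conjecture then forces $b_p(C)=0$ for $p\ge g-1-\ci(C)$ (with $\ci$ the Clifford index), and the known computation of the Clifford index of a general curve on a toric surface---equal to $\lw(\Gamma)-2$ in general, and to $\deg-4$ when $\Gamma$ is a dilate of $\Sigma$ and $C$ is therefore a smooth plane curve---identifies $g-1-\ci(C)$ with exactly the value predicted by Conjecture~\ref{Kp1conjecture}; with the scroll non‑vanishing this yields equality.

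\textbf{The main obstacle.} The crux is the propagation step. It is the only mechanism turning finitely many computed cases into an infinite family, yet the relation between $X_\Delta$ and $X_\Gamma$ for $\Delta\subseteq\Gamma$ is neither a finite morphism nor a linear section of varieties, so the required transfer of the vanishing of the \emph{last} linear Betti number must be read off the combinatorial Koszul complex rather than from classical projective geometry; pinning down which new cocycles the extra lattice points can create in precisely the boundary degree of the linear strand is the delicate part. By contrast the $\dagger$ argument is conditional on Green's conjecture, and the computational part, while finite, is pushed close to the edge of feasibility.
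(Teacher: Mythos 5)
Your argument for the generic upper bound is sound and is essentially the paper's: the scroll of dimension $\lw(\Delta)+1$ has an Eagon--Northcott, hence purely linear, resolution, and your Tor-argument (since $I_\Delta/I_S$ vanishes in degrees $\leq 1$, one has $\mathrm{Tor}_p(I_\Delta/I_S,k)_{p+1}=0$, so $K_{p,1}$ of the scroll injects into $K_{p,1}(X_\Delta)$) is a clean justification of the paper's ``summand'' claim behind Lemma~\ref{Kp1upperbound}; the $N_\Delta\leq 32$ verification by finite-characteristic computation plus semicontinuity also matches Section~\ref{section_computing}. However, two parts of the theorem are not actually proved in your proposal. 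The first is the sharper bound $\lw(\Delta)+1$ for $d\Sigma$, $\Upsilon_d$, $2\Upsilon$: you only assert that a nonzero class in $K_{N_\Delta-\lw(\Delta)-1,1}$ ``should'' be obtainable from the self-similarity $\Delta\mapsto\Delta^{(1)}$. Passing to the interior polygon induces no map on Koszul cohomology that transports a last nonvanishing linear syzygy with the correct index shift, and your sketch contains no argument that the would-be class is not a coboundary, which is the entire difficulty. The paper instead writes down explicit two- and three-term cycles (images under $\delta_{\ZZ^2}$ of elements of $\wedgepow{\ell+1}V_\Delta\otimes V_{\ZZ^2}$) and rules out exactness by a counting lemma (an element of $\wedgepow{\ell+1}V_\Delta$ with $n$ nonzero coordinates maps to exactly $(\ell+1)n$ nonzero coordinates) together with a bidegree comparison; the case $\Upsilon_d$ is then deduced from $d\Sigma$ by deleting the variable $X_{-1,-1}$, not via interiors (Lemmas~\ref{dSigmaupperbound}, \ref{Upsilonupperbound}, \ref{Upsilondupperbound}).

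The second and main gap is the propagation step, which you yourself flag as unresolved; it is the real content of the second bullet. The paper's mechanism (Theorem~\ref{deformthm}) is not a comparison of the two scrolls: it is the direct implication that if $P$ is a vertex of $\Delta$, $\Delta'=\conv(\Delta\cap\ZZ^2\setminus\{P\})$ is two-dimensional and $K_{p,1}(X_{\Delta'},L_{\Delta'})=0$, then $K_{p+1,1}(X_\Delta,L_\Delta)=0$. Its proof chooses a linear form $L$ separating the lattice points of $\Delta$ and maximal at $P$, takes a hypothetical $x\in\ker\delta\setminus\im\delta$ whose support has minimal $L$-maximum, isolates the terms whose wedge part contains the maximal point $P'$, shows the resulting $y\in\wedgepow{p}V_{\Delta'}\otimes V_{\Delta'}$ is a cocycle (here $\delta(x)=0$ is needed to force the tensor factors into $\Delta'$), writes $y=\delta(z)$ by the hypothesis on $\Delta'$, and subtracts $\delta(P'\wedge z)$ to strictly lower the $L$-maximum of the support, a contradiction. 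The index shift $p\mapsto p+1$ is precisely what keeps the distance to the end of the linear strand fixed as $N_\Delta$ grows by one; your plan to show that ``the new coordinate contributes only classes accounted for by the scroll'' has no evident route, since in column $N_\Delta-\lw(\Delta)-1$ the scroll accounts for only part of the linear strand and controls neither the new cocycles nor the new coboundaries. Two further remarks: for $\lw(\Delta)\leq 6$ you need the quantitative input of \cite{coolslemmens} (a minimal polygon of width at most $6$ has at most $29$ lattice points, so the pruning chain ends inside the verified range $N_\Delta\leq 32$, stopping one step early when the endpoint is exceptional), which your ``bounded in terms of $w$'' only gestures at; and the item marked $\dagger$ is explicitly not proved in this paper (it is deferred to the follow-up on curves and to Lelli-Chiesa), so while your canonical-curve sketch is in the intended spirit, it cannot be counted as established here either.
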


\noindent Sharpness in the cases where $N_\Delta \leq 32$ is obtained by explicit verification, based on the data from~\cite{movingout} 
and using the algorithm described in Section~\ref{section_computing}; this covers 
more than half a million (unimodular equivalence classes of) small lattice polygons. 
The statement involving $\lw(\Delta) \leq 6$ relies on this exhaustive verification, along with the classification of
inclusion-minimal lattice polygons having a given lattice width, which is elaborated in~\cite{coolslemmens}.


\begin{remark}
The statement marked with $\dagger$ will not be proven in the current paper, 
even though it is actually the reason why we came up with Conjecture~\ref{Kp1conjecture} in the first place. To date,
Green's canonical syzygy conjecture for curves in toric surfaces remains open in general, 
but the cases where $H^0(X_\Delta, -K_{X_\Delta}) \geq 2$  
are covered by recent work of Lelli-Chiesa~\cite{lellichiesa},
which allows one to deduce Conjecture~\ref{Kp1conjecture} for all multiples of $\Upsilon$, for all multiples of $\Sigma$, 
for all polygons $[0,a] \times [0,b]$ with $a,b \geq 1$, and so on. 
The details of this are discussed in a subsequent paper~\cite{nextpaper}, which is devoted to syzygies of curves in toric surfaces.
For the sake of conciseness
we have chosen to keep the present document curve-free.
\end{remark}

Next we describe our algorithm for determining 
the graded Betti table of $X_\Delta \subseteq \PP^{N_\Delta - 1}$ upon input of a lattice polygon $\Delta$, by explicitly computing its Koszul cohomology. The
details can be found in Section~\ref{section_computing}, but in a nutshell the ingredients are as follows.
The most dramatic speed-up comes from incorporating the torus action, which decomposes the cohomology spaces into eigenspaces, one for each bidegree $(a,b) \in \mathbb{Z}^2$, all but finitely many of which are trivial. 
Another important speed-up comes from toric Serre duality, enabling a meet-in-the-middle approach where one fills the graded Betti table starting from
the left and from the right simultaneously. A third speed-up comes from the explicit formula for the antidiagonal differences given in Lemma~\ref{diagonalterms}, thanks
to which it suffices to determine half of the graded Betti table only. Moreover if $\left| \partial \Delta \cap \ZZ^2 \right|$ is large (which is particularly the case for
the Veronese polygons $d\Sigma$) then many
of these entries come for free using Hering and Schenck's Theorem~\ref{heringschenckthm}.
A fourth theoretical ingredient is a combinatorial description of certain exact subcomplexes of the Koszul complex
that can be quotiented out, resulting in smaller vector spaces, thereby making the linear algebra more manageable. Because this seems interesting in its own right, we
have devoted the separate Section~\ref{section_alexandertruck} to it. Final ingredients include sparse linear algebra, using symmetries, and working in finite characteristic. More precisely, 
most of the data gathered in this article, some of which can be found in Appendix~\ref{appendix_data}, are obtained by computing modulo $40\,009$, the smallest prime
number larger than $40\,000$.

\begin{remark} \label{remark_semicontinuity}
By semi-continuity the entries of the graded Betti table cannot decrease upon reduction of $X_\Delta$ modulo some prime number. Therefore
working in finite characteristic is fine for proving that certain entries are zero, as is done in our partial verification of Conjecture~\ref{Kp1conjecture}. But entries that are found to be non-zero
might a priori be too large, even though we do not expect them to be. Therefore the non-zero entries of some of the graded Betti tables given in Appendix~\ref{appendix_data} are conjectural.
For technical reasons our current implementation does not straightforwardly adapt to characteristic zero, but we are working on fixing this issue.
Although it would come at the cost of some efficiency, this should enable us to confirm all of the data from Appendix~\ref{appendix_data} in
characteristic zero.
\end{remark}

In view of the wide interest in syzygies of Veronese modules~\cite{brunsconca,EinErmanLazarsfeld,greco,loose,ottaviani,park,rubei}, the 
most interesting new graded Betti table that we obtain is that of $X_{6\Sigma} \subseteq \PP^{27}$, i.e.\ the image of $\PP^2$ under the $6$-uple embedding $\nu_6$, in characteristic $40\,009$.
Up to $5\Sigma$ this data was recently gathered (in characteristic zero) by Greco and Martino~\cite{greco}. An extrapolating glance at these Betti tables naturally leads to the following conjecture:

\begin{conjecture} \label{veronese_conjecture}
Consider the graded Betti table of the $d$-fold Veronese surface $X_{d \Sigma}$. If $d \geq 2$ then the last
non-zero entry on the linear strand is
\[ b_{d(d+1)/2} = \frac{d^3(d^2 - 1)}{8}, \]
while if $d \geq 3$ then the first non-zero entry on the quadratic strand is
\[ c_g = { N_{(d\Sigma)\interior} + 8 \choose 9}  \]
where $N_{(d\Sigma)\interior} = |�(d \Sigma)^{(1)} \cap \ZZ^2 | = (d-1)(d-2)/2$.
\end{conjecture}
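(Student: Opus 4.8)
The plan is to extract both values from the $\GL_3(k)$-equivariant Koszul cohomology of $(\PP^2,\mathcal O(d))$. The homogeneous coordinate ring of $X_{d\Sigma}$ is the Veronese subring $R^{(d)}=\bigoplus_{n\ge0}H^0(\PP^2,\mathcal O(dn))$, so $\beta_{p,p+q}(X_{d\Sigma})=\dim K_{p,q}(\PP^2;\mathcal O,\mathcal O(d))$, and each of these spaces is a polynomial representation of $\GL_3$, hence a direct sum of Schur modules $S_\lambda(k^3)$; computing a Betti number means finding which $\lambda$ occur, with which multiplicity, and summing their Weyl dimensions. The columns are already known: $c_g$ sits in column $|\partial(d\Sigma)\cap\ZZ^2|-2=3d-2=N_\Delta-2-g$ by Theorem~\ref{heringschenckthm} (using $N_\Delta=|\partial(d\Sigma)\cap\ZZ^2|+g$ and $g=N_{(d\Sigma)^{(1)}}$), while ``$b_{d(d+1)/2}$ is the last non-zero linear entry'' is the case $\Delta\cong d\Sigma$ of Conjecture~\ref{Kp1conjecture} --- Theorem~\ref{theoremKp1progress} for $d\le 6$, and in general a consequence of the Lelli-Chiesa input mentioned above. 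So only two dimensions remain to be pinned down.

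For $d\le 4$ nothing needs to be computed. The antidiagonal of $b_{d(d+1)/2}$ meets the quadratic strand in the slot $c_d$, which lies strictly to the left of column $3d-2$ and is therefore a ``leading zero'' by Theorem~\ref{heringschenckthm}; Lemma~\ref{diagonalterms} then evaluates $b_{d(d+1)/2}$ outright. Dually, for $d=3,4$ the antidiagonal of $c_g$ meets the linear strand in $b_{3d-1}$, which is a trailing zero since $3d-1>d(d+1)/2$ (using Theorem~\ref{theoremKp1progress}), so Lemma~\ref{diagonalterms} gives $c_g$. For $d\ge 5$ the two strands overlap near both antidiagonals, so at least one of the two entries has to be computed directly, the other one following from Lemma~\ref{diagonalterms}. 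I would reduce the quadratic entry to a linear-strand computation by graded local duality on the (Cohen--Macaulay) affine cone over $X_{d\Sigma}$: one gets $\beta_{3d-2,3d}(R^{(d)})=\dim K_{d(d-3)/2,\,1}(\omega)$, where $\omega=\bigoplus_{t\ge0}H^0(\PP^2,\mathcal O(dt-3))$ is the canonical module. As $\omega_0=H^0(\mathcal O(-3))=0$, this group is a pure kernel,
\[
  c_g=\dim\ker\!\Bigl(\wedgepow{p}V\otimes H^0(\mathcal O(d-3))\longrightarrow\wedgepow{p-1}V\otimes H^0(\mathcal O(2d-3))\Bigr),\qquad p=\tfrac{d(d-3)}{2},\ \ V=H^0(\mathcal O(d)),
\]
an explicit $\GL_3$-equivariant map between explicit representations; the linear entry $b_{d(d+1)/2}=\dim K_{d(d+1)/2,1}(R^{(d)})$ has an equally explicit presentation.

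To identify these kernels I would use the rational normal scroll $S_d\supseteq X_{d\Sigma}$ of dimension $\lw(d\Sigma)+1=d+1$ and codimension $N_\Delta-d-2$, whose resolution is the Eagon--Northcott complex of a $2\times(N_\Delta-d-1)$ matrix: its linear strand is completely explicit and persists up to column $N_\Delta-d-2$ --- which already forces the linear strand of $X_{d\Sigma}$ up to there, and by Theorem~\ref{theoremKp1progress} one column further. In that last column the scroll contributes nothing, so the long exact Koszul-cohomology sequence of $0\to\mathcal I_{S_d}\to\mathcal I_{X_{d\Sigma}}\to\mathcal I_{X_{d\Sigma}/S_d}\to 0$ expresses $K_{d(d+1)/2,1}(X_{d\Sigma})$ --- and, after the duality reduction, the kernel above --- purely in terms of the $\mathcal O_{S_d}$-module structure of $X_{d\Sigma}$, which can be made explicit; I expect it to collapse to a single Schur module. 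For $c_g$ the shape of the answer is a strong hint: $\binom{g+8}{9}=\dim\operatorname{Sym}^{g-1}(k^{10})$ with $10=h^0(\PP^2,\mathcal O(3))=N_{3\Sigma}$ and $g=\dim\omega_1$, matching the recursion $(d\Sigma)^{(1)}\cong(d-3)\Sigma$ and the fact that $\omega\otimes\mathcal O(1)$ is generated by $\mathcal O(d-3)$; I would try to identify the kernel with a twist of $\operatorname{Sym}^{g-1}H^0(\PP^2,\mathcal O(3))$. The final step --- summing the Weyl dimensions of whatever Schur modules survive --- is then elementary and should collapse to $\tfrac{d^3(d^2-1)}{8}$ and $\binom{g+8}{9}$.

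The main obstacle is the matching upper bound. Exhibiting the expected syzygies is easy: they come from the scroll plus one explicit cycle. Showing there is nothing more --- that the equivariant differential above has exactly the predicted corank for every $d$ --- is exactly the kind of statement that needs a sharp $K_{p,1}$-type vanishing, here in an equivariant/toric incarnation strong enough to control the entire Schur-functor decomposition, not merely the rightmost column. At present this is only verified term by term for $d\le 6$ (which is how the conjecture was discovered, via the algorithm of Section~\ref{section_computing}), and making it uniform in $d$ is the heart of the matter. A secondary difficulty is making the $\mathcal O_{S_d}$-resolution of $X_{d\Sigma}$ precise enough to run the long exact sequence without losing information, and keeping the local-duality reduction $\GL_3$-equivariant so that the endgame is bookkeeping rather than a numerical coincidence.
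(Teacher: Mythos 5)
Be aware of what you are being asked to prove: in the paper this statement is Conjecture~\ref{veronese_conjecture}, and the paper itself offers no proof. Its only support is computational -- the known tables up to $5\Sigma$ in characteristic zero, the new table of $6\Sigma$ (and part of $7\Sigma$) modulo $40\,009$ obtained with the algorithm of Section~\ref{section_computing}, plus the remark that the more general guess for $c_g$ has been shown to be a \emph{lower bound} in the fourth author's thesis. So there is no ``paper proof'' to match, and your text, quite correctly, does not claim to close the question either. Your bookkeeping is sound and consistent with the paper's machinery: the column of $c_g$ via Theorem~\ref{heringschenckthm}, the antidiagonal partners $c_d$ of $b_{d(d+1)/2}$ and $b_{3d-1}$ of $c_g$, and the observation that Lemma~\ref{diagonalterms} settles the \emph{values} outright for $d\le 4$ (resp.\ $d=3,4$) are all correct; your local-duality reduction of $c_g$ to the kernel of $\wedgepow{g-1}V_\Delta\otimes V_{\Delta^{(1)}}\to\wedgepow{g-2}V_\Delta\otimes V_{(2\Delta)^{(1)}}$ is literally the paper's formula \eqref{cohomcelldual}, and the nonvanishing of $b_{d(d+1)/2}$ is Lemma~\ref{dSigmaupperbound}.

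The genuine gap is the one you yourself flag, and it is not a technicality but the entire content of the conjecture for $d\ge 5$: you must show that the equivariant Koszul differential has \emph{exactly} the predicted corank for every $d$, i.e.\ determine the full Schur decomposition of $K_{d(d+1)/2,1}$ and of the kernel computing $c_g$, not merely exhibit the expected classes. No statement in the paper (nor the scroll/Eagon--Northcott comparison, nor Green's $K_{p,1}$ theorem) gives an upper bound of this strength; the paper's own evidence is term-by-term verification for $d\le 6$, partly only modulo $40\,009$, where by semicontinuity even the displayed nonzero values are conjectural. Two further points where your plan leans on unproven input: the identification of $b_{d(d+1)/2}$ as the \emph{last} nonzero linear entry for general $d$ is Conjecture~\ref{Kp1conjecture} for $d\Sigma$, available unconditionally only for $d\le 6$ (the Lelli-Chiesa route is the $\dagger$ item, explicitly deferred to~\cite{nextpaper} and not proven here); and even granting that position, for $d\ge 5$ the antidiagonal identity no longer yields the value, since $c_d\neq 0$ once $d\ge 5$ (e.g.\ $c_5=4200$ for $5\Sigma$), so a genuine new computation or vanishing theorem is unavoidable. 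As it stands, your proposal is a reasonable research program that reproduces the known cases, but it does not prove the statement -- which, to be fair, nobody has.
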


\subsubsection*{Acknowledgements}

This research was partially supported by the research project G093913N of the Research Foundation Flanders (FWO),
by the European Research Council under the European Community's Seventh Framework Programme (FP7/2007-2013) with ERC Grant Agreement nr. 615722
MOTMELSUM, and by the Labex CEMPI  (ANR-11-LABX-0007-01).
The fourth author is supported by a Ph.D.\ fellowship of the Research Foundation Flanders (FWO).
We would like to thank Milena Hering and Nicolas M.~Thi\'ery for several helpful remarks.
The computational resources (Stevin Supercomputer Infrastructure)
and services used in this work were provided by the VSC (Flemish Supercomputer Center),
funded by Ghent University, the Hercules Foundation and the Flemish Government --- department EWI.
After submitting a first version of this paper to \texttt{arXiv}, we learned that a group of researchers at
the University of Wisconsin-Madison has been working independently on computing Betti tables
of Veronese surfaces~\cite{wisconsinref}, thereby obtaining results that partially overlap with our own observations.
In particular they also obtain the graded Betti table of $X_{6\Sigma} \subseteq \PP^{27}$, although
here too the result is conjectural, using linear algebra over the reals rather than mod $p$.
We thank David J.\ Bruce for getting in touch with us on this, and for his valuable feedback.

\section{Koszul cohomology of toric surfaces}\label{section_koszulcohomology}

As is well-known, instead of using syzygies, the entries of the graded Betti table can also be defined
as dimensions of Koszul cohomology spaces, which we now explicitly describe in the specific case of toric surfaces.
We refer to the book by Aprodu and Nagel~\cite{nagelaprodu} for an introduction to Koszul cohomology, and to the books by Fulton~\cite{fulton}
and Cox, Little and Schenck~\cite{coxlittleschenck} for more background on toric geometry.

For a lattice polygon $\Delta$ we write $V_\Delta$ for the space of Laurent polynomials 
\[ \sum_{(i,j) \in \Delta \cap \ZZ^2} c_{i,j}x^i y^j \ \in \, k[x^{\pm 1}, y^{\pm 1}], \]
which we view as functions on $X_\Delta$ through $\varphi_\Delta$.
This equals the space $H^0(X_\Delta,L_\Delta)$ of global sections of $\mathcal{O}(L_\Delta)$, where $L_\Delta$ is 
some concrete very ample torus-invariant divisor on $X_\Delta$ satisfying $\mathcal{O}(L_\Delta) \cong \mathcal{O}(1)$.
More generally $V_{q\Delta} = H^0(X_\Delta,qL_\Delta)$ for each $q \geq 0$.

Then the entry in the $p$th column and the $q$th row of the graded Betti table of $X_\Delta$
is the dimension of the Koszul cohomology space $K_{p,q}(X_\Delta,L_\Delta)$, defined as the cohomology in the middle of
\begin{multline*}
    \wedgepow{p+1} H^0(X_\Delta,L_\Delta) \otimes H^0(X_\Delta,(q-1)L_\Delta)
    \stackrel{\delta}{\longrightarrow} 
    \wedgepow{p} H^0(X_\Delta,L_\Delta) \otimes H^0(X_\Delta,qL_\Delta) \\
    \stackrel{\delta'}{\longrightarrow}
    \wedgepow{p-1} H^0(X_\Delta,L_\Delta) \otimes H^0(X_\Delta,(q+1)L_\Delta)
\end{multline*}
which can be rewritten as
\begin{equation}\label{cohompq}
\wedgepow{p+1} V_\Delta \otimes V_{(q-1)\Delta}
\stackrel{\delta}{\longrightarrow}
\wedgepow{p} V_\Delta \otimes V_{q\Delta}
\stackrel{\delta'}{\longrightarrow}
\wedgepow{p-1} V_\Delta \otimes V_{(q+1)\Delta}.
\end{equation}
Here the coboundary maps
$\delta$ and $\delta'$ are defined by
\begin{equation}\label{delta}
    v_1 \wedge v_2 \wedge v_3 \wedge v_4 \wedge \dots \otimes w \, \mapsto \, \sum (-1)^s v_1 \wedge v_2 \wedge v_3 \wedge v_4 \wedge \dots \wedge \widehat{v_s} \wedge \dots \otimes v_s w
\end{equation}
where $s$ ranges from $1$ to $p+1$ resp.\ $1$ to $p$, and $\widehat{v_s}$ means that $v_s$ is being omitted. 
In particular one sees that $b_\ell$ is the dimension of the cohomology in the middle of
\begin{equation}\label{cohombell}
    \wedgepow{\ell+1} V_\Delta
    \stackrel{\delta}{\longrightarrow}
    \wedgepow{\ell} V_\Delta \otimes V_{\Delta} 
    \stackrel{\delta'}{\longrightarrow}
    \wedgepow{\ell-1} V_\Delta \otimes V_{2\Delta},
\end{equation}
where we note that the left map is always injective. On the other hand
$c_\ell$ is the dimension of the cohomology in the middle of
\begin{equation}\label{cohomcell}
    \wedgepow{N_\Delta - 1 - \ell} V_\Delta \otimes V_{\Delta}
    \stackrel{\delta}{\longrightarrow}
    \wedgepow{N_\Delta - 2 - \ell} V_\Delta \otimes V_{2\Delta} 
    \stackrel{\delta'}{\longrightarrow}
    \wedgepow{N_\Delta - 3 - \ell} V_\Delta \otimes V_{3\Delta},
\end{equation}
for all $\ell = 1, \dots, N_\Delta - 3$. 

\subsection{Duality} A more concise description of the $c_\ell$'s is obtained using Serre duality.
Because the version that we will invoke
requires us to work with 
smooth surfaces, we consider a toric resolution of singularities
$X \rightarrow X_\Delta$ and let $L$ be the pullback of $L_\Delta$. Then $L$ may no longer be very ample,
but it remains globally generated by the same global sections $V_\Delta$. Let $K$ be the canonical divisor on $X$ obtained
by taking minus the sum of all torus-invariant prime divisors. 
By Demazure vanishing one has $H^1(X,qL) = 0$ for all $q \geq 0$, 
so that we can apply
the duality formula from~\cite[Thm.\,2.25]{nagelaprodu}, which in our case reads
\[ K_{p,q}(X,L)^\vee \cong K_{N_\Delta-3-p,3-q}(X;K,L), \]
to conclude that
\begin{align*}
    b_\ell &= \dim K_{\ell,1}(X_\Delta, L_\Delta) = \dim K_{\ell,1}(X, L) = \dim K_{N_\Delta - 3 - \ell,2}(X;K,L), \\
    c_\ell &= \dim K_{N_\Delta - 2 - \ell,2}(X_\Delta, L_\Delta) = \dim K_{N_\Delta - 2 - \ell,2}(X, L) = \dim K_{\ell - 1,1}(X;K,L),
\end{align*}
again for all $\ell = 1, \dots, N_\Delta - 3$. 
Here the attribute `$;K$' denotes Koszul cohomology twisted by $K$, which is defined as before, except that
each appearance of $\cdot \otimes H^0(X,qL)$ is replaced by $\cdot \otimes H^0(X,qL + K)$.
Using that $H^0(X,qL + K) = V_{(q\Delta)^{(1)}}$ for $q \geq 1$ and that
$H^0(X,K) = 0$ we find that $b_\ell$ is the cohomology in the middle of
\begin{equation} \label{cohombelldual}
    \wedgepow{N_\Delta - 2 - \ell} V_\Delta \otimes V_{\Delta^{(1)}}
    \stackrel{\delta}{\longrightarrow}
    \wedgepow{N_\Delta - 3 - \ell} V_\Delta \otimes V_{(2\Delta)^{(1)}} 
    \stackrel{\delta'}{\longrightarrow}
    \wedgepow{N_\Delta - 4 - \ell} V_\Delta \otimes V_{(3\Delta)^{(1)}}
\end{equation}
and, more interestingly, that $c_\ell$ is the dimension of the kernel of
\begin{equation} \label{cohomcelldual}
    \wedgepow{\ell-1} V_\Delta \otimes V_{\Delta^{(1)}} 
    \stackrel{\delta'}{\longrightarrow}
    \wedgepow{\ell-2} V_\Delta \otimes V_{(2\Delta)^{(1)}}.
\end{equation}
For example this gives a quick way of seeing that $c_1 = \dim \ker (V_{\Delta^{(1)}} \rightarrow 0) = N_{\Delta^{(1)}}$.

\subsection{Bigrading}\label{section_bigrading}

For $(a,b) \in \ZZ^2$ we call an element of 
\[ \wedgepow{p} V_\Delta \otimes V_{q\Delta} \]
homogeneous of bidegree $(a,b)$ if it is a $k$-linear combination of elementary tensors of the form
\[ x^{i_1}y^{j_1} \wedge \dots \wedge x^{i_p}y^{j_p} \otimes x^{i'}y^{j'} \]
satisfying $(i_1,j_1) + \dots + (i_p, j_p) + (i',j') = (a,b)$. The coboundary morphisms $\delta$ and $\delta'$
send homogeneous elements to homogeneous elements of the same bidegree, i.e.\ the Koszul complex is naturally bigraded. Thus
the Koszul cohomology spaces decompose as
\[ K_{p,q}(X,L) = \bigoplus_{(a,b) \in \ZZ^2}  K_{p,q}^{(a,b)}(X,L) \]
where in fact it suffices to let $(a,b)$ range over $(p+q)\Delta \cap \ZZ^2$.
Similarly, we have
a decomposition of the twisted cohomology spaces
\[ K_{p,q}(X; K,L) = \bigoplus_{(a,b) \in \ZZ^2}  K_{p,q}^{(a,b)}(X; K,L) \]
where now $(a,b)$ in fact runs over $\left( p\Delta + (q\Delta)^{(1)} \right) \cap \ZZ^2$. In particular also the
$b_\ell$'s and the $c_\ell$'s, and as a matter of fact the entire graded Betti table, decompose as sums of smaller instances.
We will write
\begin{align*}
    b_{\ell,(a,b)} &= \dim K_{\ell,1}^{(a,b)}(X,L), & b_{\ell,(a,b)}^\vee &= \dim K_{N_\Delta-3-\ell,2}^{(a,b)}(X;K,L), \\
    c_{\ell,(a,b)} &= \dim K_{N_\Delta-2-\ell,2}^{(a,b)}(X,L), & c_{\ell,(a,b)}^\vee &= \dim K_{\ell-1,1}^{(a,b)}(X;K,L),
\end{align*}
so that
\[ b_\ell = \sum_{(a,b) \in \ZZ^2} b_{\ell, (a,b)} = \sum_{(a,b) \in \ZZ^2} b_{\ell, (a,b)}^\vee \quad \text{and} \quad
c_\ell = \sum_{(a,b) \in \ZZ^2} c_{\ell, (a,b)} = \sum_{(a,b) \in \ZZ^2} c_{\ell, (a,b)}^\vee. \]

\begin{example}
For $\Delta = 4\Sigma$ one can compute that $c_3 = \dim K_{2,1}(X;K,L) = 55$, 
which decomposes as the sum of the following numbers.
\[
\footnotesize
\begin{array}{llllllllll}
0 \\
0 & 0 \\
0 & 1 & 0 \\
0 & 1 & 1 & 0 \\
0 & 2 & 2 & 2 & 0 \\
0 & 2 & 3 & 3 & 2 & 0 \\
0 & 2 & 3 & 4 & 3 & 2 & 0 \\
0 & 1 & 2 & 3 & 3 & 2 & 1 & 0 \\
0 & 1 & 1 & 2 & 2 & 2 & 1 & 1 & 0 \\
0 & 0 & 0 & 0 & 0 & 0 & 0 & 0 & 0 & 0 \\
\end{array}
\]
Here the entry in the $a$th column (counting from the left) and the $b$th row (counting from the bottom) is the dimension 
$c^\vee_{3,(a,b)}$ of the degree $(a,b)$ part.
In other words we think of the above triangle as being in natural correspondence with the lattice points $(a,b)$ inside 
$2\Delta + \Delta^{(1)} = (1,1) + 9\Sigma$.   
\end{example}

\subsection{Duality versus bigrading}

An interesting observation that came out of a joint discussion with
Milena Hering is that duality respects the bigrading along the rule
\[ K_{p,q}^{(a,b)}(X,L)^\vee \cong K_{N_\Delta-3-p,3-q}^{\sigma_\Delta - (a,b)}(X;K,L), \]
where $\sigma_\Delta$ denotes the sum of all lattice points in $\Delta$.
We postpone a proof to~\cite{boocherhering}, but note that
taking dimensions yields the formulas
\begin{equation} \label{dualgradingformulas}
 b_{\ell,(a,b)} = b_{\ell, \sigma_\Delta - (a,b)}^\vee \quad \text{and} \quad c_{\ell,(a,b)} = c_{\ell, \sigma_\Delta - (a,b)}^\vee.
\end{equation}
These imply that $K_{p,q}(X,L)$ is actually supported on the degrees $(a,b)$ that are contained in
\[   (p+q)\Delta \ \cap \ \left( \sigma_\Delta - (N_\Delta - 3 - p)\Delta  - ( (3-q)\Delta )^{(1)} \right),  \]
and similarly that $K_{p,q}(X;K,L)$ vanishes outside
\[ \left( p\Delta + (q\Delta)^{(1)} \right) \cap \left( \sigma_\Delta - (N_\Delta - p - q) \Delta  \right). \]
The image below illustrates this for $\Delta = 2\Upsilon$, $p=4$, $q=1$, where $K_{p,q}(X;K,L)$ is supported on $9\Upsilon \cap (-10\Upsilon)$:
\begin{center}
 \begin{tikzpicture}[scale=0.2]
   \filldraw[fill=black, fill opacity=0.1, thick] (9,9) -- (-9,0) -- (0,-9) -- (9,9);
   \filldraw[fill=black, fill opacity=0.1, thick] (-10,-10) -- (10,0) -- (0,10) -- (-10,-10);      
   \node[coordinate, label=left:{\small $(-10,10)$}] at (-10,-10) {};
   \node[coordinate, label=right:{\small $(10,0)$}] at (10,0) {};
   \node[coordinate, label=above:{\small $(0,10)$}] at (0,10) {};
   \node[coordinate, label=right:{\small $(9,9)$}] at (9,9) {};
   \node[coordinate, label=left:{\small $(-9,0)$}] at (-9,0) {};
   \node[coordinate, label=below:{\small $(0,-9)$}] at (0,-9) {};
 \end{tikzpicture}
\end{center}
In principle
this could be used to speed up our computation of the graded Betti table, 
because it says that certain bidegrees can be omitted.
Unfortunately the vanishing happens in a range of bidegrees that is dealt with relatively easily anyway.
Therefore, the computational advantage is negligible and
we will not use this in our algorithm. 

\section{First facts on the graded Betti table} \label{section_firstfacts}

\subsection{Overall shape of the graded Betti table}

We prove the shape of the graded Betti table of $X_\Delta$ announced in Lemma~\ref{lemmashapebetti}, by invoking
some well-known theorems from the existing literature.
It is also possible to give a more elementary, handcrafted proof using Koszul cohomology.
\begin{proof}[Proof of Lemma~\ref{lemmashapebetti}]
  Hochster has proven that $S_\Delta / I_\Delta$ is a Cohen--Macaulay module 
  \cite[Ex.\,9.2.8]{coxlittleschenck}. Its Krull dimension equals $3$, and therefore the Auslander--Buchsbaum
  formula \cite[Thm.\,A.2.15]{eisenbud} implies that the graded Betti table has non-zero entries up to column
  $p = N_\Delta - 3$.  
  Now it is well-known 
  that the Hilbert polynomial $P_{X_\Delta}(d)$ of $X_\Delta$ is given by the Ehrhart polynomial
  \begin{equation} \label{hilbertpol}
    | d\Delta \cap \ZZ^2| = \vol(\Delta) d^2 + \frac{|\partial \Delta \cap \ZZ^2|}{2} d + 1,
  \end{equation}
  and that this matches with the Hilbert function $H_{X_\Delta}(d)$ for all integers $d \geq 0$. In fact, the smallest
  integer $s$ such that $P_{X_\Delta}(d) = H_{X_\Delta}(d)$ for all $d \geq s$ is
  \[ \left\{ \begin{array}{cl} 0 & \text{if $\Delta^{(1)} \neq \emptyset$,} \\ 
   -1 & \text{if $\Delta^{(1)} = \emptyset$.} \\ 
   \end{array} \right. \] 
  From \cite[Cor.\,4.8]{eisenbud} we conclude that the Castelnuovo--Mumford 
  regularity of $X_\Delta$ equals $2$, unless $\Delta^{(1)} = \emptyset$ in which case it equals $1$.
\end{proof}

The polygons for which $\Delta^{(1)} = \emptyset$ have the following geometric characterization:
\begin{lemma} \label{minimaldegreelemma}
The surface $X_\Delta \subseteq \PP^{N_\Delta - 1}$ is a variety of minimal degree
if and only if $\Delta^{(1)} = \emptyset$.
\end{lemma}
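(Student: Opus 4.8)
The plan is to reduce the statement to a short computation with the Hilbert polynomial \eqref{hilbertpol}, using only the classical degree--codimension bound. Recall that a \emph{variety of minimal degree} is a nondegenerate irreducible closed subvariety $Y \subseteq \PP^n$ for which $\deg Y = \codim Y + 1$, this being the smallest value permitted by the classical inequality $\deg Y \geq \codim Y + 1$ that holds for every nondegenerate irreducible projective variety. So I would first check that $X_\Delta$ satisfies the standing hypotheses: it is irreducible, being the Zariski closure of the image of the irreducible variety $(k^*)^2$, and it is nondegenerate, since a linear relation $\sum_{(i,j)} \lambda_{i,j} X_{i,j} = 0$ on $X_\Delta$ would force the Laurent polynomial $\sum_{(i,j)} \lambda_{i,j} x^i y^j$ to vanish on all of $(k^*)^2$, hence (as $k$ is infinite) to be identically zero. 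Since $\dim X_\Delta = 2$ and the ambient space is $\PP^{N_\Delta - 1}$, we have $\codim X_\Delta = N_\Delta - 3$, so the assertion becomes: $\deg X_\Delta = N_\Delta - 2$ if and only if $\Delta^{(1)} = \emptyset$.

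Next I would read the degree off \eqref{hilbertpol}: the Hilbert polynomial of $X_\Delta$ has leading term $\vol(\Delta)\, d^2$, whence $\deg X_\Delta = 2!\,\vol(\Delta) = 2\vol(\Delta)$ by the standard relation between degree and Hilbert polynomial. Evaluating \eqref{hilbertpol} at $d = 1$, where it agrees with the Hilbert function, gives $N_\Delta = \vol(\Delta) + \tfrac12\,|\partial\Delta\cap\ZZ^2| + 1$, that is $2\vol(\Delta) = 2N_\Delta - 2 - |\partial\Delta\cap\ZZ^2|$. Hence
\[
  \deg X_\Delta = N_\Delta - 2
  \quad\Longleftrightarrow\quad 2N_\Delta - 2 - |\partial\Delta\cap\ZZ^2| = N_\Delta - 2
  \quad\Longleftrightarrow\quad N_\Delta = |\partial\Delta\cap\ZZ^2|.
\]
Since every boundary lattice point of $\Delta$ is a lattice point of $\Delta$, the last equality holds exactly when $\Delta$ has no interior lattice points, i.e.\ when $\Delta^{(1)} = \emptyset$. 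This also settles the superficially degenerate case $N_\Delta = 3$, where $\Delta \cong \Sigma$ and $X_\Delta = \PP^2$ sits in $\PP^2$ with codimension $0$ and degree $1 = N_\Delta - 2$.

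I do not expect any genuinely hard step here; the points deserving a little care are (i) confirming that $X_\Delta$ is nondegenerate, so that the degree--codimension inequality is applicable, and (ii) keeping track of the codimension-zero boundary case. If one is uneasy about $k$ not being algebraically closed, one may freely base-change to $\bar{k}$, since neither $\deg X_\Delta$ nor the condition $\Delta^{(1)} = \emptyset$ changes. As an optional coda one could invoke the del Pezzo--Bertini classification of varieties of minimal degree to describe the resulting surfaces explicitly --- the rational normal scrolls, the cones over rational normal curves, the Veronese surface $\nu_2(\PP^2) \subseteq \PP^5$, and $\PP^2$ itself, each of which indeed occurs as some $X_\Delta$ with $\Delta^{(1)} = \emptyset$ --- but this classification is not needed for the proof.
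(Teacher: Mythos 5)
Your proof is correct and follows essentially the same route as the paper: both reduce the minimal-degree condition $\deg X_\Delta = \codim X_\Delta + 1$ via the Hilbert/Ehrhart polynomial \eqref{hilbertpol} to the identity $2\vol(\Delta) = N_\Delta - 2$, and then conclude by Pick's theorem (your evaluation of \eqref{hilbertpol} at $d=1$ is exactly that relation). The extra checks of irreducibility and nondegeneracy are a welcome but minor addition.
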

\begin{proof}
By definition $X_\Delta$ has minimal degree if and only if $\deg X_\Delta = 1 + \codim X_\Delta$.
By the above formula \eqref{hilbertpol} for the Hilbert polynomial this can be rewritten as
\[ 2 \vol(\Delta) = N_\Delta - 2 \]
which by Pick's theorem holds if and only if $\Delta^{(1)} = \emptyset$.
\end{proof}

It follows that if $\Delta^{(1)} = \emptyset$ 
then the graded Betti table of $X_\Delta$
is of the form
\begin{equation} \label{toricbettileeginwendige}
\small
\begin{array}{r|ccccccc}
  & 0 & 1 & 2 & 3 & \dots & N_\Delta-4 & N_\Delta-3 \\
\hline
0 & 1 & 0 & 0 & 0 & \dots & 0 & 0 \\
1 & 0 & {N_\Delta - 2 \choose 2} & 2 {N_\Delta - 2 \choose 3} & 3 {N_\Delta - 2 \choose 4} & \dots & (N_\Delta-4) {N_\Delta - 2 \choose N_\Delta-3} & (N_\Delta - 3) {N_\Delta - 2 \choose N_\Delta - 2}, \\
\end{array}
\end{equation}
because the Eagon--Northcott complex is exact in this case; see for instance \cite[App.\,A2H]{eisenbud}. It also follows that 
if $\Delta^{(1)} \neq \emptyset$ then
$b_{N_\Delta - 3} = 0$; see \cite[Thm.\,3.31(i)]{nagelaprodu}.
From a combinatorial viewpoint the two-dimensional lattice polygons $\Delta$ for
which $\Delta^{(1)} = \emptyset$ were classified in \cite[Ch.\,4]{koelmanthesis}: up to unimodular equivalence 
they are $2\Sigma$ and
the Lawrence prisms 
\begin{center}
 \begin{tikzpicture}[scale=0.8]
   \draw [thick] (0,0) -- (4,0) -- (3,0.6) -- (0,0.6) -- (0,0);
    \node at (-0.2,-0.3) {\small $(0,0)$};
    \node at (4.1,-0.3) {\small $(a,0)$};
    \node at (3.1,0.9) {\small $(b,1)$};
    \node at (-0.2,0.9) {\small $(0,1)$};
    \node at (9,0.3) {for integers $a \geq b \geq 0$ with $a > 0$.};
 \end{tikzpicture}
 \end{center}
The respective corresponding $X_\Delta$'s are the Veronese surface in $\PP^5$ and the rational normal surface
scrolls of type $(a,b)$.
One thus sees that Conjecture~\ref{Kp1conjecture} is true if $\Delta^{(1)} = \emptyset$.

\subsection{Antidiagonal differences}\label{section_diagonal_differences}

From the explicit shape \eqref{hilbertpol} of the Hilbert polynomial,
the closed formula 
\[ b_\ell - c_{N_\Delta - 1 - \ell} = \ell {N_\Delta - 1 \choose \ell + 1} - 2 {N_\Delta - 3 \choose \ell - 1} \vol(\Delta) \]
for the antidiagonal differences, which was announced in Lemma~\ref{diagonalterms}, can be proved by induction.
We will give a slightly more convenient argument using Koszul cohomology.
\begin{proof}[Proof of Lemma~\ref{diagonalterms}.]
The proof relies on three elementary facts: 
\begin{enumerate}
\item[(i)] Pick's theorem, 
\item[(ii)] for any bounded complex of finite-dimensional vector spaces $V_j$ one has 
\[ \sum_j(-1)^j \dim V_j=\sum_j(-1)^j \dim H^j,\] 
where $H^j$ is the cohomology of the complex at place $j$,
\item[(iii)] for all $n,k,N\geq 0$ we have $\sum_{j=0}^n(-1)^j\binom{N}{n-j}\binom{j}{k}=(-1)^k\binom{N-k-1}{n-k}$.
\end{enumerate}
We compute
\begin{align*}
b_\ell-c_{N_\Delta-1-\ell}&=\sum_{j=0}^{\ell+1}(-1)^{j+1}\dim K_{\ell-j+1,j}(X_\Delta,L_\Delta) \\
&\overset{(\text{ii})}{=}\sum_{j=0}^{\ell+1}(-1)^{j+1}\dim\left(\wedgepow{\ell+1-j} V_\Delta\otimes V_{j\Delta}\right) \\
&=\sum_{j=0}^{\ell+1}(-1)^{j+1}\binom{N_\Delta}{\ell+1-j}N_{j\Delta} \\
&\overset{(\text{i})}{=}-\sum_{j=0}^{\ell+1}(-1)^j\binom{N_\Delta}{\ell+1-j}(j^2\vol(\Delta)+\frac{j}{2}\left|\partial\Delta \cap \mathbb{Z}^2\right| +1) \\
&\overset{(\text{i})}{=}-\sum_{j=0}^{\ell+1}(-1)^j\binom{N_\Delta}{\ell+1-j}(j^2\vol(\Delta)+j(N_\Delta-\vol(\Delta)-1)+1) \\
&=-\sum_{j=0}^{\ell+1}(-1)^j\binom{N_\Delta}{\ell+1-j}\left(2\vol(\Delta)\binom{j}{2}+(N_\Delta-1)\binom{j}{1}+\binom{j}{0}\right) \\
&\overset{(\text{iii})}{=}-2\vol(\Delta)\binom{N_\Delta-3}{\ell-1}+(N_\Delta-1)\binom{N_\Delta-2}{\ell}-\binom{N_\Delta-1}{\ell+1} \\
&=-2\vol(\Delta)\binom{N_\Delta-3}{\ell-1}+\ell\binom{N_\Delta-1}{\ell+1},
\end{align*} which equals the desired expression.
\end{proof}

We note the following corollary to Lemma~\ref{diagonalterms}:

\begin{corollary} For all $\ell$ one has that $b_\ell \geq c_{N_\Delta-1-\ell}$
if and only if 
\[ \ell \leq \frac{ (N_\Delta-1)(N_\Delta-2) }{2 \vol(\Delta)} - 1.\]
\end{corollary}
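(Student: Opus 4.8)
The plan is to extract everything directly from the closed formula in Lemma~\ref{diagonalterms}. For $\ell$ in the range $1 \le \ell \le N_\Delta - 2$ that lemma gives
\[ b_\ell - c_{N_\Delta - 1 - \ell} = \ell \binom{N_\Delta - 1}{\ell + 1} - 2 \binom{N_\Delta - 3}{\ell - 1} \vol(\Delta), \]
so the inequality $b_\ell \ge c_{N_\Delta - 1 - \ell}$ amounts exactly to deciding when the right-hand side is nonnegative; outside this range there is nothing to prove.

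First I would record the elementary binomial identity
\[ \ell \binom{N_\Delta - 1}{\ell + 1} = \frac{(N_\Delta - 1)(N_\Delta - 2)}{\ell + 1} \binom{N_\Delta - 3}{\ell - 1}, \]
which follows by writing both sides in terms of factorials and cancelling (the key cancellation being $\ell / (\ell+1)! = 1 / \big((\ell+1)(\ell-1)!\big)$). Substituting this into the formula and pulling out the common factor yields
\[ b_\ell - c_{N_\Delta - 1 - \ell} = \binom{N_\Delta - 3}{\ell - 1} \left( \frac{(N_\Delta - 1)(N_\Delta - 2)}{\ell + 1} - 2 \vol(\Delta) \right), \]
where the binomial coefficient $\binom{N_\Delta - 3}{\ell - 1}$ is strictly positive, precisely because $0 \le \ell - 1 \le N_\Delta - 3$ throughout the range under consideration.

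It then follows that $b_\ell - c_{N_\Delta - 1 - \ell}$ has the same sign as the bracketed expression, so $b_\ell \ge c_{N_\Delta - 1 - \ell}$ holds if and only if $(N_\Delta - 1)(N_\Delta - 2)/(\ell+1) \ge 2\vol(\Delta)$; since $\vol(\Delta) > 0$ and $\ell + 1 > 0$, this rearranges to $\ell \le (N_\Delta-1)(N_\Delta-2)/(2\vol(\Delta)) - 1$, which is the claimed bound.

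I do not foresee any real obstacle here: the computation is routine once the binomial identity is in hand. The only point requiring a little care is the bookkeeping at the endpoint $\ell = N_\Delta - 2$, where $b_{N_\Delta-2} = 0$ and $c_1 = N_{\Delta^{(1)}}$ by the conventions and computations of Section~\ref{section_koszulcohomology}. There one checks the equivalence directly using Pick's theorem in the form $2\vol(\Delta) = N_\Delta - 2 + N_{\Delta^{(1)}}$: the bound becomes $2\vol(\Delta) \le N_\Delta - 2$, i.e.\ $\Delta^{(1)} = \emptyset$, which matches $b_{N_\Delta-2} = 0 \ge N_{\Delta^{(1)}} = c_1$.
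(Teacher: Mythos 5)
Your proposal is correct and follows exactly the route the paper intends: the corollary is stated as an immediate consequence of Lemma~\ref{diagonalterms} (the paper gives no separate proof), and your factorization $b_\ell - c_{N_\Delta-1-\ell} = \binom{N_\Delta-3}{\ell-1}\bigl((N_\Delta-1)(N_\Delta-2)/(\ell+1) - 2\vol(\Delta)\bigr)$ with the positivity of the binomial factor is precisely the routine algebra being left to the reader. The endpoint check at $\ell = N_\Delta-2$ via Pick's theorem is a harmless redundancy, since your general sign argument already covers it.
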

\begin{remark} Note that 
$2\vol(\Delta) = 2N_\Delta - |\partial\Delta \cap \ZZ^2| - 2$ by Pick's theorem. This is typically $\approx 2N_\Delta$,
so the point where the $c_\ell$'s take over from the $b_\ell$'s
is about halfway the Betti table. 
If $|\partial\Delta \cap \ZZ^2|$ is relatively large then
$2\vol(\Delta)$ becomes smaller when compared to $N_\Delta$,
and the takeover point is shifted to the right.
\end{remark}

\subsection{Explicit formulas for some entries} \label{subsection_explicitformulas}

We can give a complete combinatorial characterization of eight entries. Six of these
are rather straightforward:

\begin{corollary}
On the quadratic strand one has
\[ c_1 = N_{\Delta^{(1)}}, \qquad \qquad 
c_2 = \left\{ \begin{array}{ll} (N_\Delta - 3)(N_{\Delta^{(1)}} - 1) & \text{if $\Delta^{(1)} \neq \emptyset$,} \\ 
0 & \text{if $\Delta^{(1)} = \emptyset$,} \\  \end{array} \right. \]
\[ c_{N_\Delta - 3} = \left\{ \begin{array}{ll} 0 & \text{if $|\partial \Delta \cap \ZZ^2| > 3$,} \\ 1 & \text{if $|\partial \Delta \cap \ZZ^2| = 3$ and $\dim \Delta^{(1)} = 2$,} \\ N_\Delta - 3 & \text{if $|\partial \Delta \cap \ZZ^2| = 3$ and $\dim \Delta^{(1)} \leq 1$.} \\ \end{array}
\right.  \] 
On the linear strand one has
\[ b_1 = {N_\Delta - 1 \choose 2} - 2 \vol(\Delta), 
\qquad \qquad b_{N_\Delta - 3} = \left\{ \begin{array}{ll} 0 & \text{if $\Delta^{(1)} \neq \emptyset$}, \\ N_\Delta - 3 & \text{if $\Delta^{(1)} = \emptyset$,} \\  \end{array} \right. \]
\[ b_2 = 2 {N_\Delta - 1 \choose 3} - 2(N_\Delta - 3) \vol(\Delta) + c_{N_\Delta - 3}. \]
\end{corollary}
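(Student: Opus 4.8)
The plan is to treat the six "straightforward" entries first, exploiting the dual Koszul-cohomology descriptions \eqref{cohombelldual}--\eqref{cohomcelldual}, and then to derive the two remaining formulas ($b_1$ and $b_2$) purely formally from Lemma~\ref{diagonalterms} together with what has already been established.

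First I would handle $b_1$ and $b_{N_\Delta-3}$, which require essentially no new work. For $b_{N_\Delta-3}$ the statement is literally recorded in the text following Lemma~\ref{minimaldegreelemma}: if $\Delta^{(1)}\neq\emptyset$ then $b_{N_\Delta-3}=0$ by \cite[Thm.\,3.31(i)]{nagelaprodu}, and if $\Delta^{(1)}=\emptyset$ then reading off the last column of \eqref{toricbettileeginwendige} gives $(N_\Delta-3)\binom{N_\Delta-2}{N_\Delta-2}=N_\Delta-3$. For $b_1$: apply Lemma~\ref{diagonalterms} with $\ell=1$ to get $b_1 - c_{N_\Delta-2} = \binom{N_\Delta-1}{2} - 2\vol(\Delta)$, and since $c_{N_\Delta-2}=0$ by convention this is exactly the claimed value. (Alternatively one computes directly that $b_1=\dim K_{1,1}(X_\Delta,L_\Delta)$ is the codimension of the degree-$2$ part of $I_\Delta$ inside $\wedge^2 V_\Delta$, but invoking the lemma is cleaner.)

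Next come $c_1$, $c_2$, and $c_{N_\Delta-3}$. The value $c_1=N_{\Delta^{(1)}}$ is already proved in the excerpt: \eqref{cohomcelldual} identifies $c_1$ with $\dim\ker(V_{\Delta^{(1)}}\to 0)=N_{\Delta^{(1)}}$. For $c_2$, \eqref{cohomcelldual} with $\ell=2$ gives $c_2=\dim\ker\bigl(V_\Delta\otimes V_{\Delta^{(1)}}\stackrel{\delta'}{\to} V_{(2\Delta)^{(1)}}\bigr)$, where $\delta'(v\otimes w)=vw$. One must show this kernel has dimension $(N_\Delta-3)(N_{\Delta^{(1)}}-1)$ when $\Delta^{(1)}\neq\emptyset$ (and is $0$ otherwise, which is immediate from Lemma~\ref{lemmashapebetti} since $\Delta^{(1)}=\emptyset$ forces all $c_\ell=0$). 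The map $V_\Delta\otimes V_{\Delta^{(1)}}\to V_{\Delta+\Delta^{(1)}}=V_{(2\Delta)^{(1)}}$ is the multiplication map of a globally generated line bundle against a base-point-free system; I would argue it is surjective (e.g.\ via the standard fact that for toric/projectively normal setups such multiplication maps are surjective, or directly by a monomial argument: every lattice point of $\Delta+\Delta^{(1)}$ is a sum of a lattice point of $\Delta$ and one of $\Delta^{(1)}$), so the kernel has dimension $N_\Delta\cdot N_{\Delta^{(1)}} - N_{(2\Delta)^{(1)}}$; then one checks the combinatorial identity $N_\Delta N_{\Delta^{(1)}} - N_{(2\Delta)^{(1)}} = (N_\Delta-3)(N_{\Delta^{(1)}}-1)$, i.e.\ $N_{(2\Delta)^{(1)}} = 3N_{\Delta^{(1)}} + N_\Delta - 3$, which follows from Pick/Ehrhart applied to $\Delta$ and $\Delta^{(1)}$ (using $|\partial\Delta\cap\ZZ^2| = N_\Delta - N_{\Delta^{(1)}}$ and the Ehrhart-reciprocity relation between $(2\Delta)^{(1)}$ and $2\Delta^{(1)}$-type counts). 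For $c_{N_\Delta-3}$: this is the first nonzero spot on the quadratic strand counted from the left, so by Hering--Schenck's Theorem~\ref{heringschenckthm} it vanishes as soon as $|\partial\Delta\cap\ZZ^2|>3$; when $|\partial\Delta\cap\ZZ^2|=3$ one has $\Delta\cong d\Sigma$ or $\Delta\cong\Upsilon_d$ by the classification of polygons with three boundary lattice points (equivalently $\dim\Delta^{(1)}\le 1$ versus $\dim\Delta^{(1)}=2$ distinguishes the two families after a further reduction), and one computes $c_{N_\Delta-3}=\dim K_{2,1}(X;K,L)$ directly via \eqref{cohombelldual}/\eqref{cohomcelldual} in these low-rank cases, getting $1$ when $\Delta^{(1)}$ is two-dimensional and $N_\Delta-3$ when $\Delta^{(1)}$ has dimension $\le 1$ (the latter being a rational normal scroll situation where the Eagon--Northcott-type tail contributes).

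Finally $b_2$ and $c_3$ follow formally: Lemma~\ref{diagonalterms} with $\ell=2$ gives $b_2 - c_{N_\Delta-3} = 2\binom{N_\Delta-1}{3} - 2(N_\Delta-3)\vol(\Delta)$, which rearranges to the stated formula for $b_2$; and with $\ell=N_\Delta-4$ it expresses $c_3$ in terms of $b_{N_\Delta-4}$ (whose formula is the remaining "non-straightforward" entry, established separately in Section~\ref{subsection_explicitformulas}). The main obstacle I anticipate is the surjectivity of the multiplication map $V_\Delta\otimes V_{\Delta^{(1)}}\to V_{(2\Delta)^{(1)}}$ needed for $c_2$ and, relatedly, the careful case analysis for $c_{N_\Delta-3}$ when $|\partial\Delta\cap\ZZ^2|=3$ — both are genuinely combinatorial/geometric rather than formal bookkeeping; everything else is either already in the excerpt or a direct consequence of Lemma~\ref{diagonalterms}.
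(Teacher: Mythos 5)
Several of your entries are handled exactly as in the paper and are fine: $b_1$ via Lemma~\ref{diagonalterms} at $\ell=1$, $b_{N_\Delta-3}$ from the discussion around \eqref{toricbettileeginwendige} and \cite[Thm.\,3.31(i)]{nagelaprodu}, $c_1$ from \eqref{cohomcelldual}, $b_2$ from Lemma~\ref{diagonalterms} at $\ell=2$ once $c_{N_\Delta-3}$ is known, and $c_2=0$ when $\Delta^{(1)}=\emptyset$. But the two places you yourself single out as the real content both have genuine problems. For $c_2$ with $\Delta^{(1)}\neq\emptyset$ you reduce everything to surjectivity of the multiplication map $V_\Delta\otimes V_{\Delta^{(1)}}\to V_{(2\Delta)^{(1)}}$, asserting it ``directly by a monomial argument: every lattice point of $\Delta+\Delta^{(1)}$ is a sum of a lattice point of $\Delta$ and one of $\Delta^{(1)}$.'' That is not a formality: lattice points of a Minkowski sum of lattice polygons do not decompose in general (e.g.\ $(1,1)\in\Sigma+\conv\{(0,0),(1,2)\}$ is not a sum of lattice points), the cited result \cite{HNPS} used elsewhere in the paper only covers $\Delta+\Delta$, and the normal fan of $\Delta^{(1)}$ need not be compatible with that of $\Delta$. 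The statement is true, but it needs a proof or a precise citation, and your dimension count ($N_\Delta N_{\Delta^{(1)}}-N_{(2\Delta)^{(1)}}$) only gives a lower bound $c_2\geq(N_\Delta-3)(N_{\Delta^{(1)}}-1)$ without it. The paper sidesteps this entirely: it applies Lemma~\ref{diagonalterms} at $\ell=N_\Delta-3$, where $b_{N_\Delta-3}$ is already known, and uses Pick's theorem to simplify $(N_\Delta-3)(N_\Delta-1-2\vol(\Delta))=(N_\Delta-3)(1-N_{\Delta^{(1)}})$; no surjectivity statement is needed.

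The treatment of $c_{N_\Delta-3}$ contains an outright error. Your claim that $|\partial\Delta\cap\ZZ^2|=3$ forces $\Delta\cong d\Sigma$ or $\Delta\cong\Upsilon_d$ is false: for $d\geq 2$ those polygons have $|\partial(d\Sigma)\cap\ZZ^2|=3d$ and $|\partial\Upsilon_d\cap\ZZ^2|=d+2$, both larger than $3$, whereas the polygons with exactly three boundary lattice points are precisely the triangles with primitive edges, an infinite family not exhausted by any short list (e.g.\ $\conv\{(0,0),(3,1),(1,2)\}$). So the proposed case analysis for $|\partial\Delta\cap\ZZ^2|=3$ does not get off the ground, and ``one computes directly in these low-rank cases'' is not a proof for an infinite class. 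Also note that your appeal to Theorem~\ref{heringschenckthm} for the vanishing when $|\partial\Delta\cap\ZZ^2|>3$ requires $\Delta^{(1)}\neq\emptyset$ (the case $\Delta^{(1)}=\emptyset$ must be covered separately, e.g.\ by Lemma~\ref{lemmashapebetti}). The paper instead identifies $c_{N_\Delta-3}$ with the number of cubics in a minimal generating set of $I_\Delta$ and quotes the formula established in \cite[\S2]{canonical}; if you want a self-contained argument you would have to reprove that count, which is precisely the work your proposal leaves undone. (Finally, $c_3$ is not part of this corollary, so the last sentence about it is beside the point.)
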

\begin{proof}
The formulas for $b_1$ and $c_1$ follow immediately from Lemma~\ref{diagonalterms}, where in the latter case
we use that $N_\Delta - 2 - 2\vol(\Delta) = N_{\Delta^{(1)}}$ by Pick's theorem. The entry
$c_{N_\Delta - 3}$ equals the number of cubics in a minimal set of generators of $I_\Delta$, which
was determined in \cite[\S2]{canonical}. Together with Lemma~\ref{diagonalterms} this then gives the formula for $b_2$.
The formula for $b_{N_\Delta - 3}$ was discussed above, and the formula for $c_2$ then again follows using Lemma~\ref{diagonalterms}
in combination with Pick's theorem.
\end{proof}

In Section~\ref{section_formulabN4} we will extend this list as follows. 
This will take considerably more work, and depends on our proof of Conjecture~\ref{Kp1conjecture}
for polygons of small lattice width, given in Section~\ref{section_deformation}. 
\begin{theorem} \label{thirdentry}
Assume that $N_\Delta \geq 4$, or equivalently that $\Delta \not \cong \Sigma$. Then we have
\[ b_{N_\Delta - 4} = (N_\Delta - 4) \cdot B_\Delta \quad \text{where} \quad B_\Delta = \left\{  \begin{array}{ll} 0 & \text{if $\dim \Delta^{(1)} = 2$, $\Delta\not \cong\Upsilon_2$}, \\ 1 & \text{if $\dim \Delta^{(1)} = 1$ or $\Delta \cong \Upsilon_2$}, \\ (N_\Delta - 1)/2 & \text{if $\dim \Delta^{(1)} = 0$}, \\
N_\Delta - 2 & \text{if $\Delta^{(1)} = \emptyset$} \\ \end{array} \right. \]
and
\[ c_3 = (N_\Delta - 4) \left( (N_\Delta - 3)  \vol (\Delta) - \frac{(N_\Delta - 1)(N_\Delta - 2)}{2} + B_\Delta \right). \]
\end{theorem}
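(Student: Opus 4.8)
The plan is to compute $b_{N_\Delta-4}$ directly from the dual Koszul cohomology description~\eqref{cohombelldual}, which for $\ell = N_\Delta - 4$ becomes the cohomology in the middle of
\[
\wedgepow{2} V_\Delta \otimes V_{\Delta^{(1)}}
\stackrel{\delta}{\longrightarrow}
V_\Delta \otimes V_{(2\Delta)^{(1)}}
\stackrel{\delta'}{\longrightarrow}
V_{(3\Delta)^{(1)}}.
\]
The last map $\delta'$ is just multiplication, hence surjective (for $q\geq 1$ the twisted section spaces behave well), so by an Euler characteristic count the sought dimension is $\dim\wedgepow{2}V_\Delta\cdot N_{\Delta^{(1)}} - N_\Delta N_{(2\Delta)^{(1)}} + N_{(3\Delta)^{(1)}} + \dim\ker\delta$, and $\ker\delta$ must be understood separately. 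Alternatively one exploits that $b_{N_\Delta-4}$ sits one step past the end of the generic linear strand: by Conjecture~\ref{Kp1conjecture} proved in low lattice width (Section~\ref{section_deformation}), for most $\Delta$ the linear strand already vanishes at column $N_\Delta - \lw(\Delta) - 2 \leq N_\Delta - 4$ unless $\lw(\Delta)$ is very small, so the case analysis is genuinely governed by $\dim\Delta^{(1)}$ and by the exceptional polygons. The four cases in the statement correspond exactly to $\lw(\Delta)\in\{0,1,2\}$ together with the Veronese/$\Upsilon_d$ anomalies, and the value $N_\Delta - 4$ being small.

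The four regimes I would treat as follows. When $\Delta^{(1)}=\emptyset$ the surface has minimal degree (Lemma~\ref{minimaldegreelemma}) and the resolution is the Eagon--Northcott complex, so $b_{N_\Delta-4}$ is read off directly from~\eqref{toricbettileeginwendige} as $(N_\Delta-4)\binom{N_\Delta-2}{N_\Delta-3} = (N_\Delta-4)(N_\Delta-2)$, giving $B_\Delta = N_\Delta - 2$. When $\dim\Delta^{(1)} = 0$, i.e. $\Delta^{(1)}$ is a single lattice point, one has $\lw(\Delta)\leq 2$ and the relevant $\Delta$ up to unimodular equivalence form a short explicit list (the polygons with exactly one interior point, classified classically); here $V_{\Delta^{(1)}}$ is one-dimensional and~\eqref{cohombelldual} simplifies to $\wedgepow2 V_\Delta \to V_{(2\Delta)^{(1)}}\to V_{(3\Delta)^{(1)}}$ tensored down, whose middle cohomology I would compute by hand (or invoke the small-$N_\Delta$ verification), obtaining $(N_\Delta-1)/2$. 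When $\dim\Delta^{(1)}=1$ or $\Delta\cong\Upsilon_2$ one similarly has bounded combinatorics ($\Delta^{(1)}$ a segment forces $\Delta$ into the Lawrence-prism-like or $\Upsilon_2$ families with $\lw(\Delta)\leq 3$), and a direct computation of~\eqref{cohombelldual} yields $B_\Delta = 1$. Finally, for $\dim\Delta^{(1)}=2$ and $\Delta\not\cong\Upsilon_2$, one has $\lw(\Delta)\geq 3$, so $N_\Delta - \lw(\Delta) - 2 \leq N_\Delta - 5 < N_\Delta - 4$, and Theorem~\ref{theoremKp1progress} (the bound $\min\{\ell : b_{N_\Delta-\ell}\neq 0\}\leq\lw(\Delta)+2$, together with its sharpness in the non-exceptional small-lattice-width cases) forces $b_{N_\Delta-4}=0$, i.e. $B_\Delta = 0$; the excluded $\Upsilon_2$ (and one must check the $d\Sigma$, $\Upsilon_d$ exceptions do not interfere here, which they don't since those have $\lw+1$ only reaching column $N_\Delta-\lw-1$) is precisely where the scroll bound is sharper and a non-zero entry survives. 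The formula for $c_3$ is then immediate from Lemma~\ref{diagonalterms} with $\ell = 3$: $c_3 = c_{N_\Delta - 1 - (N_\Delta-4)} = b_{N_\Delta-4} - (N_\Delta-4)\binom{N_\Delta-1}{N_\Delta-3} + 2\binom{N_\Delta-3}{N_\Delta-5}\vol(\Delta)$, and substituting $b_{N_\Delta-4}=(N_\Delta-4)B_\Delta$ and simplifying the binomials $\binom{N_\Delta-1}{N_\Delta-3}=\binom{N_\Delta-1}{2}$, $\binom{N_\Delta-3}{N_\Delta-5}=\binom{N_\Delta-3}{2}$ rearranges to the stated expression.

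The main obstacle I expect is the case $\dim\Delta^{(1)}=0$ (one interior lattice point), where $B_\Delta = (N_\Delta-1)/2$ is not an obviously "structural" number and must come out of an honest rank computation in Koszul cohomology; here one needs either the classification of polygons with a single interior point (there are $16$ reflexive ones plus their sub-polygons, all with $N_\Delta$ small, so the finite verification of Section~\ref{section_computing} already covers them) or a uniform argument. I would lean on the exhaustive check for $N_\Delta\leq 32$ to pin down this case, then note that $\dim\Delta^{(1)}=0$ forces $N_\Delta$ small enough to be inside that range — indeed if $\Delta$ has exactly one interior point then $\vol(\Delta) = (N_\Delta-1)/2$ is bounded (the one-interior-point polygons have at most $9$ lattice points), so there is nothing left to prove beyond the finite check. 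A secondary subtlety is making the reduction "linear strand vanishes before column $N_\Delta-4$" rigorous for $\dim\Delta^{(1)}=2$: one must verify that none of the exceptional families $d\Sigma$, $\Upsilon_d$, $2\Upsilon$ with $\dim\Delta^{(1)}=2$ and $\Delta\not\cong\Upsilon_2$ have $\lw(\Delta)+1 = N_\Delta - 4$, i.e. that their anomalous non-zero entry does not land exactly at column $N_\Delta-4$; for $d\Sigma$ with $d\geq 3$ one has $\lw = d$ and $N_\Delta = \binom{d+2}{2}$, so $N_\Delta - 4 - (\lw+1) = \binom{d+2}{2} - d - 5 > 0$ for $d\geq 3$, and the analogous inequality for $\Upsilon_d$ ($d\geq 2$, where $\dim\Upsilon_d^{(1)}=2$ iff $d\geq 3$) and $2\Upsilon$ is equally routine, so indeed $b_{N_\Delta-4}=0$ throughout that regime.
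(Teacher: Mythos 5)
Your derivation of the $c_3$ formula from Lemma~\ref{diagonalterms} and your treatment of the case $\Delta^{(1)}=\emptyset$ via the Eagon--Northcott table \eqref{toricbettileeginwendige} agree with the paper. For $\dim\Delta^{(1)}=0$ a finite verification is admissible (the paper itself mentions the $16$ reflexive polygons as an alternative), though its actual argument is a uniform count: $\delta'$ is surjective, $\delta$ is injective because $V_{\Delta^{(1)}}$ is one-dimensional, and Pick's theorem evaluates the Euler characteristic to $(N_\Delta-4)(N_\Delta-1)/2$. Minor slips in your overview: your Euler-characteristic expression has the first three terms with reversed signs (it should be $N_\Delta N_{(2\Delta)^{(1)}}-N_{(3\Delta)^{(1)}}-\dim\bigl(\wedgepow{2}V_\Delta\bigr)\cdot N_{\Delta^{(1)}}+\dim\ker\delta$), one-interior-point polygons can have $10$ lattice points ($3\Sigma$) and lattice width $3$, not $\leq 2$; none of this is fatal.

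The genuine gaps are in the remaining two cases. First, $\dim\Delta^{(1)}=1$ is \emph{not} a family of bounded combinatorics: $N_\Delta$ is unbounded there (e.g.\ $[0,a]\times[0,2]$ for every $a\geq 2$), so neither a hand computation over a short list nor the $N_\Delta\leq 32$ verification can establish $b_{N_\Delta-4}=N_\Delta-4$; saying ``a direct computation of \eqref{cohombelldual} yields $B_\Delta=1$'' restates the claim rather than proving it. This is exactly where the paper does its real work: it normalizes $\Delta$ so that $\Delta^{(1)}\subseteq\RR\times\{0\}$, introduces the auxiliary map $f:V_\Delta\otimes V_{(2\Delta)^{(1)}}\rightarrow k[x_{-1},x_0,x_1]\otimes V_\ZZ$, proves $\im\delta=\ker f$ by an explicit rewriting argument (non-empty choice intervals for $e$, plus the fact that every lattice point of $2\Delta$ is a sum of two lattice points of $\Delta$), and then computes $\dim\im f-|(3\Delta)^{(1)}\cap\ZZ^2|=N_\Delta-4$; your proposal has no substitute for this step. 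Second, in the case $\dim\Delta^{(1)}=2$, $\Delta\not\cong\Upsilon_2$, your appeal to Theorem~\ref{theoremKp1progress} is pointed the wrong way: that theorem is an \emph{upper} bound on $\min\{\ell\mid b_{N_\Delta-\ell}\neq 0\}$, i.e.\ it guarantees a non-zero entry at column $N_\Delta-\lw(\Delta)-2$ and says nothing about vanishing at column $N_\Delta-4$. What is needed is the vanishing direction, i.e.\ the proved instances of Conjecture~\ref{Kp1conjecture}, and those are available only for small lattice width ($\lw(\Delta)\leq 4$ in Section~\ref{section_deformation}, $\leq 6$ with \cite{coolslemmens}); since $\dim\Delta^{(1)}=2$ allows arbitrary lattice width, your argument as written covers only part of the case. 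The paper's route is through Section~\ref{section_deformation}: the conjecture for $\lw\leq 4$ together with the pruning Theorem~\ref{deformthm}, which transfers vanishing at a fixed distance from the end of the table from a pruned subpolygon up to $\Delta$ and thus reaches polygons of large lattice width. Your checks that the exceptional families do not place their anomalous non-zero entry at column $N_\Delta-4$ concern the non-vanishing direction and do not fill this gap.
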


\section{Bound on the length of the linear strand} \label{section_Kp1}

\subsection{Bound through rational normal scrolls}

Let $\Delta \subseteq \RR^2$ be a two-dimensional lattice polygon and apply a 
unimodular transformation in order to have $\Delta \subseteq \RR \times [0,d]$ with $d = \lw(\Delta)$.
For each $j = 0, \dots, d$ consider 
\[ m_j = \min \{ a \, | \, (a,j) \in \Delta \cap \ZZ^2 \} \quad \text{and} \quad M_j = \max \{ a \, | \, (a,j) \in \Delta \cap \ZZ^2 \}. \]
These are well-defined, i.e.\ on each height $j$ there is at least one lattice point in $\Delta$, see for instance \cite[Lem.\,5.2]{linearpencils}. Recall that $X_\Delta$ is the Zariski closure of the image of
\begin{align*}
\varphi_\Delta : (k^*)^2 \hookrightarrow \mathbb{P}^{ N_\Delta - 1} :  (\alpha, \beta) \mapsto (&\alpha^{m_0} \beta^0, \alpha^{m_0+1} \beta^0, \dots, \alpha^{M_0} \beta^0, \\
  & \alpha^{m_1} \beta^1, \alpha^{m_1 + 1} \beta^1, \dots, \alpha^{M_1} \beta^1, \\
  & \hspace{25mm}\vdots \\
  & \alpha^{m_d} \beta^d, \alpha^{m_d + 1} \beta^d, \dots, \alpha^{M_d} \beta^d).
\end{align*}
It is clear that this is contained in the Zariski closure of the image of
\begin{align*}
(k^*)^{1 + d} \hookrightarrow \mathbb{P}^{ N_\Delta - 1} : (\alpha, \beta_1, \dots, \beta_d) \mapsto (&\alpha^{m_0} \beta_0, \alpha^{m_0+1} \beta_0, \dots, \alpha^{M_0} \beta_0, \\
  & \alpha^{m_1} \beta_1, \alpha^{m_1 + 1} \beta_1, \dots, \alpha^{M_1} \beta_1, \\
  & \hspace{25mm}\vdots \\
  & \alpha^{m_d} \beta_d, \alpha^{m_d + 1} \beta_d, \dots, \alpha^{M_d} \beta_d)
\end{align*}
where $\beta_0 = 1$.
This is a $(d+1)$-dimensional rational normal scroll, spanned by rational normal curves of degrees $M_0 - m_0$, $M_1 - m_1$, \dots, $M_d - m_d$ (some of these degrees
may be zero,
in which case the `curve' is actually a point).
Its ideal is obtained from $I_\Delta$ by restricting to those binomial generators that
remain valid if one forgets about the vertical structure of $\Delta$. More precisely,
we associate to $\Delta$ a lattice polytope $\Delta' \subseteq \RR^{d+1}$ by considering for each
$(a,b) \in \Delta \cap \ZZ^2$ the lattice point 
\[ (a, 0, 0, \dots, 1, \dots, 0), \qquad \text{where the $1$ is in the $(b+1)$st place (omitted if $b=0$),} \]
and taking the convex hull. For example:
\begin{center}
 \begin{tikzpicture}[scale=0.5]
   \draw [thick] (0,0) -- (6,0) -- (7,1) -- (5,2) -- (1,2) -- (0,1) -- (0,0); 
   \node at (-0.5,-0.5) {\small $(0,0)$};   
   \node at (6.5,-0.5) {\small $(6,0)$};   
   \node at (7.9,1) {\small $(7,1)$};   
   \node at (5.5,2.5) {\small $(5,2)$};   
   \node at (0.5,2.5) {\small $(1,2)$};   
   \node at (-0.9,1) {\small $(0,1)$};   
   \node at (3.4,1) {\small $\Delta$};   
 \end{tikzpicture}
 \qquad \qquad \quad
  \begin{tikzpicture}[scale=0.6]
   \draw [dashed] (0,0) -- (2,1.5);  
   \draw [dashed] (6,0) -- (6,1.5);  
   \draw [thick] (0,0) -- (6,0) -- (7,1) -- (0,1) -- (0,0); 
   \draw [thick] (7,1) -- (6,1.5) -- (2,1.5) -- (0,1); 
   \node at (-0.5,-0.5) {\small $(0,0,0)$};   
   \node at (6.5,-0.5) {\small $(6,0,0)$};   
   \node at (8.1,1) {\small $(7,1,0)$};   
   \node at (-1.1,1) {\small $(0,1,0)$};       
   \node at (5.5,1.9) {\small $(5,0,1)$};   
   \node at (1.5,1.9) {\small $(1,0,1)$};    
   \node at (3.4,0.5) {\small $\Delta'$};   
 \end{tikzpicture}
\end{center}
Then our scroll is just the toric variety $X_{\Delta'}$ associated to $\Delta'$; this is unambiguously defined because $\Delta'$ is normal, as is easily seen using \cite[Prop.\,1.2.2]{bruns}. We denote
its defining ideal viewed inside $I_\Delta \subseteq S_\Delta$ by $I_{\Delta'}$.

As a generalization of \eqref{toricbettileeginwendige}, it is known that a minimal free resolution of the coordinate ring $S_\Delta / I_{\Delta'}$ of a rational normal scroll
is given by the Eagon--Northcott complex, from which it follows that the graded Betti table of $X_{\Delta'}$ has the following shape:
\begin{equation} \label{bettieagonnorthcott}
\small
\begin{array}{r|ccccccc}
  & 0 & 1 & 2 & 3 & \dots & f-2 & f-1 \\
\hline
0 & 1 & 0 & 0 & 0 & \dots & 0 & 0 \\
1 & 0 & {f \choose 2} & 2 {f \choose 3} & 3 {f \choose 4} & \dots & (f-2) {f \choose f-1} & (f-1) {f \choose f} \\
\end{array}
\end{equation}
where $f = \deg X_{\Delta'} = N_{\Delta'} - d - 1 = N_\Delta - d - 1$. Because all syzygies are linear, this must be a summand of the graded Betti table of $X_{\Delta}$, from which it follows that:
\begin{lemma}\label{Kp1upperbound}
$\min \{ \, \ell \, | \, b_{N_\Delta-\ell} \neq 0 \, \} \leq \lw(\Delta) + 2$.
\end{lemma}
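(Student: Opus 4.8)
The plan is to make precise the claim that the graded Betti table of the scroll $X_{\Delta'}$ is a direct summand of that of $X_\Delta$, and then read off the bound. First I would record the inclusion of ideals $I_{\Delta'} \subseteq I_\Delta$ inside the common polynomial ring $S_\Delta$; since both are homogeneous this gives a surjection of graded $S_\Delta$-modules $S_\Delta/I_{\Delta'} \twoheadrightarrow S_\Delta/I_\Delta$. However, the cleanest route to "summand" is not through this surjection directly but through Koszul cohomology: by the description in Section~\ref{section_koszulcohomology}, $b_{N_\Delta-\ell}(X_\Delta) = \dim K_{N_\Delta-\ell,1}(X_\Delta, L_\Delta)$ is computed from the complex \eqref{cohombell} built out of $V_\Delta = H^0(X_\Delta, L_\Delta)$ and $V_{2\Delta} = H^0(X_\Delta, 2L_\Delta)$, and these are the \emph{same} vector spaces $H^0 = k[\Delta'\cap\ZZ^{d+1}]$ that compute the linear strand of the scroll $X_{\Delta'}$, because $\Delta'$ is normal and its lattice-point ring in degrees $1$ and $2$ matches $V_\Delta$, $V_{2\Delta}$ (the vertical variables $\beta_1,\dots,\beta_d$ are just bookkeeping). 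So the two linear-strand Koszul complexes agree on the nose.

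Next I would make the summand statement at the level of Betti numbers. The standard fact I would invoke (see e.g.\ \cite[Cor.\,1.8]{nagelaprodu} or the discussion around Green's $K_{p,1}$ theorem \cite[Thm.\,3.31]{nagelaprodu}) is that if $Y \subseteq X \subseteq \PP^n$ are nondegenerate with $Y$ of minimal degree, then the linear strand of the Eagon--Northcott resolution of $Y$ appears as a subtable of the linear strand of $X$; equivalently $K_{p,1}(\PP^n, \mathcal{O}) = 0$ forces the linear syzygies of $X$ in the relevant range to come from $Y$. Here $Y = X_{\Delta'}$ is a rational normal scroll, hence of minimal degree, living in the same $\PP^{N_\Delta-1}$ as $X = X_\Delta$, and its resolution is the Eagon--Northcott complex with linear-strand Betti numbers $\binom{f}{j+1}j$ for $f = N_\Delta - d - 1$ and $j = 1,\dots, f-1$, as displayed in \eqref{bettieagonnorthcott}. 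In particular $\beta_{f-1,f}(X_{\Delta'}) = f-1 \neq 0$.

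Then the conclusion is immediate: $b_{f-1}(X_\Delta) \geq \beta_{f-1,f}(X_{\Delta'}) = f - 1 > 0$, so $\min\{\ell \mid b_{N_\Delta-\ell} \neq 0\} \leq N_\Delta - (f-1) = N_\Delta - (N_\Delta - d - 1) + (-1) = d + 2$, using $f = N_\Delta - d - 1$ and $d = \lw(\Delta)$. (One should note the edge cases: if $\lw(\Delta) = 0$ then $\Delta$ is one-dimensional and there is nothing to prove, and if $d = \lw(\Delta)$ is such that $f \leq 1$ the scroll degenerates, but then $\Delta' \cong \Delta$ and $X_\Delta$ is itself of minimal degree, a case already handled in \eqref{toricbettileeginwendige} where $b_{N_\Delta-3} = N_\Delta - 3 \neq 0$ gives the bound with room to spare.)

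The main obstacle is pinning down the "summand" assertion rigorously rather than hand-wavingly: one must be careful that $I_{\Delta'}$, defined via the $(d+1)$-dimensional embedding, genuinely sits inside $S_\Delta$ as an ideal contained in $I_\Delta$ — this is exactly the observation that the scroll's binomial relations are the subset of $\Delta$'s binomial relations that survive forgetting the vertical coordinate — and that normality of $\Delta'$ (via \cite[Prop.\,1.2.2]{bruns}) is what guarantees the degree-$1$ and degree-$2$ graded pieces of $S_\Delta/I_{\Delta'}$ coincide with $V_\Delta$ and $V_{2\Delta}$, so that the linear Koszul complexes literally coincide. Once that is set up, the comparison of linear strands is the formal statement that, for the common ambient space, a minimal-degree subvariety contributes its entire Eagon--Northcott linear strand to any variety containing it; I would cite this rather than reprove it.
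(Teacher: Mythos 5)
Your proposal follows the same route as the paper's first proof: place $X_\Delta$ inside the $(\lw(\Delta)+1)$-dimensional rational normal scroll $X_{\Delta'}$, use that the scroll's minimal resolution is the Eagon--Northcott complex \eqref{bettieagonnorthcott} with last linear entry in column $f-1$, $f=N_\Delta-\lw(\Delta)-1$, and deduce $b_{N_\Delta-\lw(\Delta)-2}\neq 0$; your numerology at the end is correct. However, the justification you offer for the key ``subtable'' step contains a genuine error. It is not true that the degree-$2$ graded piece of $S_\Delta/I_{\Delta'}$ equals $V_{2\Delta}$: since $I_{\Delta'}\subsetneq I_\Delta$ in general (the scroll is cut out by only some of the quadrics), one has $\dim(S_\Delta/I_{\Delta'})_2=N_{2\Delta'}>N_{2\Delta}$ whenever $X_\Delta$ is not itself the scroll --- for instance for $\Delta=2\Sigma$ the scroll's degree-$2$ piece has dimension $18$ while $N_{2\Delta}=15$. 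So the two linear-strand Koszul complexes do \emph{not} ``agree on the nose''; only their first two terms do. Relatedly, the ``standard fact'' you cite is stated with the containment backwards ($Y\subseteq X$ with $Y$ of minimal degree), even though in the application you need, and implicitly use, the direction $X_\Delta\subseteq X_{\Delta'}$.

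The correct and short argument for the step is the following. From $I_{\Delta'}\subseteq I_\Delta$ one gets a surjection $(S_\Delta/I_{\Delta'})_2\twoheadrightarrow (S_\Delta/I_\Delta)_2=V_{2\Delta}$. In the two complexes
\begin{equation*}
\wedgepow{p+1}V_\Delta\longrightarrow \wedgepow{p}V_\Delta\otimes V_\Delta\longrightarrow \wedgepow{p-1}V_\Delta\otimes (S_\Delta/I)_2,\qquad I\in\{I_{\Delta'},\,I_\Delta\},
\end{equation*}
the first two terms and the first map coincide, and the second map for $I=I_\Delta$ is the second map for $I=I_{\Delta'}$ followed by $\mathrm{id}\otimes(\text{surjection})$; hence the kernel of the right-hand map can only grow when passing from the scroll to $X_\Delta$, while $\im\delta$ is unchanged. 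This gives an injection $K_{p,1}(X_{\Delta'},\mathcal{O}(1))\hookrightarrow K_{p,1}(X_\Delta,L_\Delta)$, so $b_p(X_\Delta)\geq \beta_{p,p+1}(X_{\Delta'})$ for all $p$, and taking $p=f-1$ yields the lemma. With this repair your write-up matches the paper's scroll argument; the paper additionally gives an independent proof by exhibiting an explicit non-exact Koszul cycle, which you do not attempt, but that is not needed once the scroll argument is made rigorous.
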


\subsection{Explicit construction of non-exact cycles}

We can give an alternative proof of Lemma~\ref{Kp1upperbound} by explicitly constructing non-zero elements
in Koszul cohomology. From a geometric point of view this approach is less enlightening, but it 
allows us to prove the sharper bound $\min \{ \, \ell \, | \, b_{N_\Delta-\ell} \neq 0 \, \} \leq \lw(\Delta) + 1$
in the cases $\Delta \cong d\Sigma, \Upsilon_d$ ($d \geq 2$) and $\Delta \cong 2\Upsilon$. As we will see,
the sharper bound for $d\Sigma$ immediately implies the sharper bound for $\Upsilon_d$.

For $\ell = 1, \dots, N_\Delta - 3$ recall that $b_\ell$ is the cohomology in the middle of
\[
    \wedgepow{\ell+1} V_\Delta
    \stackrel{\delta}{\longrightarrow}
    \wedgepow{\ell} V_\Delta \otimes V_{\Delta} 
    \stackrel{\delta'}{\longrightarrow}
    \wedgepow{\ell-1} V_\Delta \otimes V_{2\Delta}.
\]
It is convenient to view this as a subcomplex of
\[
    \wedgepow{\ell+1} V_\Delta \otimes V_{\ZZ^2}
    \stackrel{\delta_{\ZZ^2}}{\longrightarrow}
    \wedgepow{\ell} V_\Delta \otimes V_{\ZZ^2} 
    \stackrel{\delta'_{\ZZ^2}}{\longrightarrow}
    \wedgepow{\ell-1} V_\Delta \otimes V_{\ZZ^2},
\]
where $V_{\ZZ^2} = k[x^{\pm 1}, y^{\pm 1}]$.
In what follows we will abuse notation and 
describe the basis elements of $V_\Delta$ and $V_{\ZZ^2}$ using the points
$(i,j) \in \ZZ^2$ rather than the monomials $x^iy^j$. 

Our technique to construct an element of $\ker \delta' \setminus \im \delta$ will be to apply $\delta_{\ZZ^2}$ to an element of $\wedgepow{\ell+1} V_\Delta\otimes V_{\ZZ^2}$ such that the result is in
$\wedgepow{\ell} V_\Delta\otimes V_\Delta$. This result 
will then automatically be contained in $\ker \delta'$, but it might land outside $\im \delta$. 
We first state an easy lemma that will be helpful in proving that certain elements are indeed not contained in $\im \delta$. 
Fix a strict total order $<$ on $\Delta \cap \ZZ^2$ and consider the bases
$$B=\{P_1\wedge\ldots\wedge P_{\ell+1} \, | \, P_1<\ldots<P_{\ell+1},\, P_1,\ldots,P_{\ell+1}\in\Delta \cap \ZZ^2\},$$
$$B'=\{P_1\wedge\ldots\wedge P_\ell \otimes P \, | \, P_1<\ldots<P_\ell, \, P,P_1,\ldots,P_\ell \in \Delta \cap \ZZ^2 \}$$
of $\wedgepow{\ell+1} V_\Delta$ and $\wedgepow{\ell} V_\Delta\otimes V_\Delta$, respectively.
\begin{lemma} 
If $x\in\wedgepow{\ell +1} V_\Delta$ has $n$ non-zero coordinates with respect to $B$, then $\delta(x)$ has $(\ell+1)n$ non-zero coordinates with respect to $B'$. 
\end{lemma}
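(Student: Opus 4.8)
The plan is to track what the differential $\delta$ does on the level of basis vectors and argue that no cancellation can occur. First I would recall from \eqref{delta} that for a single wedge $P_1 \wedge \ldots \wedge P_{\ell+1}$ (with $P_1 < \ldots < P_{\ell+1}$, a basis element of $B$) one has
\[
\delta(P_1 \wedge \ldots \wedge P_{\ell+1}) = \sum_{s=1}^{\ell+1} (-1)^s\, P_1 \wedge \ldots \wedge \widehat{P_s} \wedge \ldots \wedge P_{\ell+1} \otimes P_s,
\]
and each summand $P_1 \wedge \ldots \wedge \widehat{P_s} \wedge \ldots \wedge P_{\ell+1} \otimes P_s$ is, up to sign, precisely an element of the basis $B'$: the first $\ell$ factors are already strictly increasing, and the tensor factor is $P_s \in \Delta \cap \ZZ^2$. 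So $\delta$ sends a basis vector of $B$ to an honest $\mathbb{Z}$-linear combination (with coefficients $\pm 1$) of exactly $\ell+1$ \emph{distinct} basis vectors of $B'$.

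The key step is then to observe that this assignment of basis vectors is injective in a strong sense: from a basis vector $Q_1 \wedge \ldots \wedge Q_\ell \otimes Q$ of $B'$ appearing in the image, one recovers the unique basis vector of $B$ that produced it, namely $\operatorname{sort}(Q_1, \ldots, Q_\ell, Q)$ — reinsert $Q$ into the increasing list $Q_1 < \ldots < Q_\ell$ in its sorted position. Hence if two \emph{different} basis vectors $\beta, \beta'$ of $B$ are such that $\delta(\beta)$ and $\delta(\beta')$ share a common basis vector of $B'$ in their supports, then $\beta = \operatorname{sort}$ of that common vector $= \beta'$, a contradiction. Therefore the supports (in $B'$) of $\delta(\beta)$, as $\beta$ ranges over $B$, are pairwise disjoint. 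Consequently, for $x = \sum_{i} \lambda_i \beta_i$ with exactly $n$ nonzero coefficients $\lambda_i$ (over the distinct basis vectors $\beta_i \in B$), the expansion $\delta(x) = \sum_i \lambda_i\, \delta(\beta_i)$ exhibits no cancellation across different $i$, and within each $\delta(\beta_i)$ all $\ell+1$ terms are distinct basis vectors with coefficient $\pm \lambda_i \neq 0$; so $\delta(x)$ has exactly $(\ell+1)n$ nonzero coordinates with respect to $B'$.

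I do not anticipate a genuine obstacle here; this is a bookkeeping lemma. The one point requiring a moment of care is the disjointness-of-supports claim, i.e.\ the fact that the "reinsert and sort" map is a well-defined left inverse — this uses that the $\ell$ surviving factors in each term of $\delta(\beta)$ inherit the strict total order from $\beta$, and that reinserting the omitted point recovers $\beta$ on the nose (as an ordered tuple, hence as a basis vector). Once that is in place, the count $(\ell+1)n$ is immediate. I would write this up in a few lines, making the left-inverse map explicit and then concluding that no two of the sets $\operatorname{supp}_{B'}(\delta(\beta_i))$ overlap.
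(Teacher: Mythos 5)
Your proof is correct and follows essentially the same route as the paper: expand $\delta(x)$ term by term, observe each term is $\pm a_i$ times an element of $B'$, count $(\ell+1)n$ terms, and check they are mutually distinct. The paper dismisses that last distinctness check as ``easily done,'' whereas you make it explicit via the reinsert-and-sort left inverse, which is a fine way to carry out exactly the verification the paper has in mind.
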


\begin{proof} 
Write $x=\sum_{i=1}^{n}a_iP_{i,1}\wedge\ldots\wedge P_{i, \ell +1}$, $a_i\in k \setminus \{0\}$, where the $P_{i,1}\wedge\ldots\wedge P_{i, \ell +1}$'s are distinct elements of $B$. Then
$$\delta(x)=\sum_{i=1}^{n}\sum_{j=1}^{\ell +1}(-1)^ja_iP_{i,1}\wedge\ldots\wedge\widehat{P_{i,j}}\wedge\ldots\wedge P_{i, \ell +1}\otimes P_{i,j}$$
Each term in this sum is $\pm a_i$ times an element of $B'$, and the number of terms is $(\ell+1)n$, so we just have to verify 
that these
elements of $B'$ are mutually distinct, but that is easily done. 
\end{proof}

Our alternative proof of the upper bound $\min \{ \, \ell \, | \, b_{N_\Delta-\ell} \neq 0 \, \} \leq \lw(\Delta) + 2$ now goes as follows.

\begin{proof}[Alternative proof of Lemma~\ref{Kp1upperbound}]
As before, we can assume that $\Delta\subseteq\RR\times[0,d]$ with $d = \lw(\Delta)$.
Let $\ell=N_\Delta-d-2$ and let $P_1,\ldots,P_{\ell+1}$ be the points $(i,j)\in\Delta$ for which $i>m_j$, indexed so that $P_1<\ldots<P_{\ell+1}$. Now consider
\begin{align}
y&=\delta_{\ZZ^2}(P_1\wedge\ldots\wedge P_{\ell+1}\otimes(-1,0))\nonumber \\&=\sum_{s=1}^{\ell+1}(-1)^sP_1\wedge\ldots\wedge\widehat{P_{s}}\wedge\ldots\wedge P_{\ell+1}\otimes(P_s+(-1,0)).\nonumber
\end{align}
Clearly $y \in \wedgepow{\ell} V_\Delta\otimes V_\Delta$ and therefore $y \in \ker \delta'$.
So it remains to show that $y\notin\im\delta$. Suppose $y=\delta(x)$ for some $x\in\wedgepow{\ell+1} V_\Delta$. 
Since $y$ has $\ell+1$ nonzero coordinates with respect to the basis $B'$, by the previous lemma $x$ has just one non-zero coordinate with respect to 
the basis $B$. Therefore we can write
$$x=aP_1'\wedge\ldots\wedge P_{\ell+1}', \quad a\in k \setminus \{0\}, \quad P_1'<\ldots<P_{\ell+1}',$$
so that
$$y=\delta(x)=\sum_{s=1}^{\ell+1}a(-1)^sP_1'\wedge\ldots\wedge\widehat{P_{s}'}\wedge\ldots\wedge P_{\ell+1}'\otimes P_s'.$$
Comparing both expressions for $y$, we deduce that $\{P_1,\ldots,P_{\ell+1}\}=\{P_1',\ldots,P_{\ell+1}'\}$. 
This gives us a contradiction since the two expressions for $y$ have a different bidegree. 
Summing up, we have shown that $b_{N_\Delta - d - 2} \neq 0$, from which Lemma~\ref{Kp1upperbound} follows.
\end{proof}

The same proof technique enables us to deduce a sharper bound in the 
exceptional cases $d\Sigma$ ($d \geq 2$) and $2\Upsilon$.

\begin{lemma} \label{dSigmaupperbound}
If $\Delta \cong d \Sigma$ for some $d \geq 2$ then $\min \{ \, \ell \, | \, b_{N_\Delta-\ell} \neq 0 \, \} \leq \lw(\Delta) + 1$.
\end{lemma}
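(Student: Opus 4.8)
The goal is to sharpen the bound from $\lw(\Delta)+2$ to $\lw(\Delta)+1$ in the Veronese case $\Delta \cong d\Sigma$, i.e.\ to exhibit a nonzero class in $K_{\ell,1}(X_\Delta, L_\Delta)$ with $\ell = N_\Delta - d - 1 = N_\Delta - \lw(d\Sigma) - 1$. As in the alternative proof of Lemma~\ref{Kp1upperbound}, the strategy is to produce an explicit element $y \in \wedgepow{\ell} V_\Delta \otimes V_\Delta$ that automatically lies in $\ker \delta'$ (because it is in the image of $\delta_{\ZZ^2}$ applied to something in $\wedgepow{\ell+1} V_\Delta \otimes V_{\ZZ^2}$), and then to argue that $y \notin \im\delta$. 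The new twist, relative to the previous proof, is that we now need $\ell$ to be one larger, so we have one fewer point to play with, and we must be more careful about which $\ell+1$ points of $d\Sigma \cap \ZZ^2$ to wedge together.

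First I would set $\Delta = d\Sigma$ with vertices $(0,0),(d,0),(0,d)$, so $\lw(\Delta) = d$ and $N_\Delta = \binom{d+2}{2}$, and take $\ell = N_\Delta - d - 1$. The $\ell+1 = N_\Delta - d$ points $P_1, \dots, P_{\ell+1}$ I would choose are the lattice points of $d\Sigma$ other than those on, say, the edge from $(0,0)$ to $(0,d)$ — that is, all $(i,j)$ with $i \geq 1$ — which is exactly $N_\Delta - (d+1)$ points; one checks this matches $\ell+1$. Then consider $y = \delta_{\ZZ^2}\bigl(P_1 \wedge \dots \wedge P_{\ell+1} \otimes (-1,0)\bigr) = \sum_s (-1)^s P_1 \wedge \dots \wedge \widehat{P_s} \wedge \dots \wedge P_{\ell+1} \otimes (P_s + (-1,0))$. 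Since every $P_s$ has $i \geq 1$, each shifted point $P_s + (-1,0)$ has $i \geq 0$ and still lies in $d\Sigma \cap \ZZ^2$, so $y \in \wedgepow{\ell} V_\Delta \otimes V_\Delta$ and hence $y \in \ker\delta'$. The key point is that now there are $\ell+1$ terms but they are \emph{not} all distinct basis elements of $B'$, because (unlike in the previous proof) the shifted point $P_s + (-1,0)$ may coincide with one of the remaining $P_t$'s; so I would first verify which terms survive and with what multiplicity, and check $y \neq 0$ — this is where the choice of the excluded edge matters.

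The crux is showing $y \notin \im\delta$. The counting lemma says that if $x \in \wedgepow{\ell+1} V_\Delta$ has $n$ nonzero $B$-coordinates then $\delta(x)$ has $(\ell+1)n$ nonzero $B'$-coordinates; but $y$ might have fewer than $\ell+1$ nonzero coordinates (terms can cancel or collapse), so the clean "$x$ has exactly one nonzero coordinate" deduction from before doesn't literally apply. I expect the resolution is one of two routes: either (a) argue that $y$ still has enough nonzero $B'$-coordinates to force $n \leq 1$, then run the bidegree-contradiction exactly as before — $\{P_1', \dots, P_{\ell+1}'\}$ would have to equal $\{P_1,\dots,P_{\ell+1}\}$, but the bidegrees differ by $(-1,0)$; or (b) use the bigrading directly: restrict attention to the single bidegree $(a,b) = P_1 + \dots + P_{\ell+1} + (-1,0)$, observe that in that bidegree $\wedgepow{\ell+1} V_\Delta$ is spanned by relatively few wedges (those using $\ell+1$ of the available lattice points summing to $(a,b)$), and check by a short direct argument that none of them maps to $y$ under $\delta$. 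I would expect route (b) to be cleaner for $d\Sigma$: in the relevant bidegree the source of $\delta$ is low-dimensional and one can essentially enumerate it.

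**Main obstacle.** The real work is the combinatorial bookkeeping in the chosen bidegree: verifying that $y \neq 0$ after the partial cancellations, identifying exactly which wedge products $P_1' \wedge \dots \wedge P_{\ell+1}'$ of lattice points of $d\Sigma$ land in the same bidegree as $y$, and ruling each of them out as a preimage. Everything else — the vanishing $y \in \ker\delta'$, the translation between syzygies and Koszul cohomology, the value of $\lw(d\Sigma)$ — is immediate from the setup already in the excerpt. I would also keep an eye out for the degenerate small case ($d = 2$, where $d\Sigma = 2\Sigma$ has $\Delta^{(1)} = \emptyset$ and is already covered by \eqref{toricbettileeginwendige}), which may need to be excluded or treated by inspection.
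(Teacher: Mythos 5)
There is a genuine gap, and it is not just bookkeeping: the point count in your construction is off by one, and this breaks the whole single-tensor strategy. With $\ell = N_\Delta - d - 1$ you need to wedge $\ell + 1 = N_\Delta - d$ lattice points, but the set of points of $d\Sigma$ with $i \geq 1$ has only $N_\Delta - (d+1) = \ell$ elements (you remove the $d+1$ points on the edge $i=0$), so "one checks this matches $\ell+1$" fails. Worse, this cannot be repaired by choosing a different edge or shift vector: for the construction $y = \delta_{\ZZ^2}(P_1 \wedge \dots \wedge P_{\ell+1} \otimes v)$ to land in $\wedgepow{\ell} V_\Delta \otimes V_\Delta$ you need every $P_s + v$ to lie in $d\Sigma$, i.e.\ all $P_s \in d\Sigma \cap (d\Sigma - v)$, and for any nonzero lattice vector $v$ this intersection contains at most $N_\Delta - (d+1) = \ell$ lattice points (the best case being $v$ parallel to an edge, where exactly one edge's worth of points is lost). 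So a single elementary tensor can never supply the $\ell+1$ points required for the sharper bound; it only reproduces the already-known bound $\lw(\Delta)+2$.

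The paper's proof gets around exactly this obstruction by taking a linear combination of \emph{two} elementary tensors: with $P_1,\dots,P_\ell$ the lattice points of $(d-1)\Sigma$ (note: $\ell$ of them, in wedge degree $\ell+1$ once the extra vertex is adjoined), it applies $\delta_{\ZZ^2}$ to
\[
(d-1,1)\wedge P_1\wedge\dots\wedge P_\ell\otimes(1,0) \;-\; (d,0)\wedge P_1\wedge\dots\wedge P_\ell\otimes(0,1).
\]
The two "bad" terms, both equal to $P_1\wedge\dots\wedge P_\ell\otimes(d,1)$ with $(d,1)\notin d\Sigma$, cancel, and what remains is a cycle in $\wedgepow{\ell} V_\Delta \otimes V_\Delta$ with exactly $2\ell$ distinct nonzero coordinates with respect to $B'$. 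The counting lemma then forces any preimage under $\delta$ to contribute a multiple of $\ell+1$ coordinates, and $\ell+1 \nmid 2\ell$ once $\ell = d(d+1)/2 \geq 3$, which settles $d \geq 2$ (so no separate treatment of $d=2$ is needed, contrary to your worry — note also that $\Delta^{(1)} = \emptyset$ for $2\Sigma$ does not make the claim trivial, it just makes it checkable from \eqref{toricbettileeginwendige}). If you want to salvage your write-up, this "two tensors with a cancelling overflow term" device is the missing idea; your route (b) of working in a fixed bidegree is not needed once the count $2\ell$ versus $(\ell+1)n$ is available.
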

\begin{proof}
We can of course assume that $\Delta = d\Sigma$. Recall that $N_\Delta = (d+1)(d+2)/2$ and that $\lw(\Delta) = \lw(d\Sigma) = d$. Let $\ell=N_\Delta-d-1 = d(d+1)/2$. Let $P_1,\ldots,P_\ell$ be the elements of $(d-1)\Sigma \cap \ZZ^2$
and define
\begin{align}
y&=\delta_{\ZZ^2}\Big((d-1,1)\wedge P_1\wedge\ldots\wedge P_\ell)\otimes (1,0)-(d,0)\wedge P_1\wedge\ldots\wedge P_\ell\otimes(0,1)\Big)\nonumber \\
&=\sum_{s=1}^\ell(-1)^s(d,0)\wedge P_1\wedge\ldots\wedge\widehat{P_s}\wedge\ldots\wedge P_\ell\otimes(P_s+(0,1))\nonumber \\
&\hphantom{=}-\sum_{s=1}^\ell(-1)^s(d-1,1)\wedge P_1\wedge\ldots\wedge\widehat{P_s}\wedge\ldots\wedge P_\ell\otimes(P_s+(1,0)).\nonumber
\end{align}
As in the previous proof, since $y\in\wedgepow{\ell} V_\Delta\otimes V_\Delta$ we have $y\in\ker \delta'$. The fact that $y\notin \im \delta$ follows from the fact that the number of nonzero coordinates with respect to $B'$ is $2\ell$. If $y$ were in the image, then by our lemma $2\ell$ should be divisible by $\ell+1$, hence $\ell \leq 2$. But $\ell=d(d+1)/2\geq 3$ because $d\geq 2$: contradiction,
and the lemma follows.
\end{proof}
\begin{lemma} \label{Upsilonupperbound}
If $\Delta \cong 2 \Upsilon$ then $\min \{ \, \ell \, | \, b_{N_\Delta-\ell} \neq 0 \, \} \leq \lw(\Delta) + 1$.
\end{lemma}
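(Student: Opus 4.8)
The plan is to mimic, step for step, the proof of Lemma~\ref{dSigmaupperbound}. For $\Delta=2\Upsilon$ we have $N_\Delta=10$ and $\lw(\Delta)=4$, so the assertion is equivalent to $b_\ell\neq0$ for $\ell=N_\Delta-\lw(\Delta)-1=5$. As before, it suffices to produce a nonzero element $y$ of $\ker\bigl(\delta'\colon\wedgepow{5}V_\Delta\otimes V_\Delta\to\wedgepow{4}V_\Delta\otimes V_{2\Delta}\bigr)$ that does not lie in $\im\bigl(\delta\colon\wedgepow{6}V_\Delta\to\wedgepow{5}V_\Delta\otimes V_\Delta\bigr)$, and as before we obtain such a $y$ as $\delta_{\ZZ^2}(\eta)$ for a $\ZZ^2$-homogeneous $\eta\in\wedgepow{6}V_\Delta\otimes V_{\ZZ^2}$. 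Then $\delta'(y)=0$ is automatic, and the two things to arrange are (i) that $y$ actually lands in $\wedgepow{5}V_\Delta\otimes V_\Delta$, i.e.\ that all contributions of $\delta_{\ZZ^2}$ whose tensor factor leaves $\Delta$ cancel, and (ii) that the number of nonzero coordinates of $y$ with respect to the basis $B'$ is not divisible by $\ell+1=6$, so that the counting lemma forbids $y\in\im\delta$.

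The obstruction to copying the $d\Sigma$ argument verbatim is that no five-element subset of $2\Upsilon\cap\ZZ^2$ can be translated into $2\Upsilon$ by two distinct nonzero lattice vectors — the largest such ``doubly shiftable'' subsets have only four elements — so the two-term element used there has no counterpart. Instead one takes $\eta$ to be a telescoping sum of five blocks, say $\eta=\sum_{t=1}^{5}c_t\,\omega_t$ with all $c_t\in\{\pm1\}$ and (abusing notation as in the text, so that $(i,j)$ denotes the monomial $x^iy^j$)
\begin{align*}
\omega_1&=\bigl((-1,-1)\wedge(0,-1)\wedge(-1,0)\wedge(0,0)\wedge(1,0)\wedge(0,1)\bigr)\otimes(1,0),\\
\omega_2&=\bigl((-1,-1)\wedge(-1,0)\wedge(0,0)\wedge(1,0)\wedge(2,0)\wedge(0,1)\bigr)\otimes(-1,-1),\\
\omega_3&=\bigl((-2,-2)\wedge(-1,-1)\wedge(0,0)\wedge(1,0)\wedge(2,0)\wedge(0,1)\bigr)\otimes(0,1),\\
\omega_4&=\bigl((-2,-2)\wedge(-1,-1)\wedge(0,0)\wedge(1,0)\wedge(0,1)\wedge(1,1)\bigr)\otimes(1,0),\\
\omega_5&=\bigl((-1,-1)\wedge(0,-1)\wedge(0,0)\wedge(1,0)\wedge(0,1)\wedge(1,1)\bigr)\otimes(-1,-1),
\end{align*}
all of bidegree $(0,-1)$. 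A direct check shows that $\delta_{\ZZ^2}(\omega_t)$ has six terms, of which exactly one (for $t=1$ and $t=5$) or exactly two (for $t=2,3,4$) are ``bad'' in the sense that the tensor factor falls outside $\Delta$; moreover each pair of consecutive blocks $\omega_t,\omega_{t+1}$ shares exactly one bad term while non-consecutive blocks share none. The bad terms thus form a path $\omega_1-\omega_2-\omega_3-\omega_4-\omega_5$, so the signs $c_1,\dots,c_5$ can be propagated along it — with no consistency obstruction, as for a tree — so that every bad term cancels; this is precisely what forces $y=\delta_{\ZZ^2}(\eta)\in\wedgepow{5}V_\Delta\otimes V_\Delta$.

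With the blocks fixed, the rest is a finite verification. After the four cancellations there remain $5+4+4+4+5=22$ good terms, among which exactly one pair coincides (removing $(-1,0)$ from $\omega_1$ and removing $(1,1)$ from $\omega_5$ both give $(-1,-1)\wedge(0,-1)\wedge(0,0)\wedge(1,0)\wedge(0,1)\otimes(0,0)$), so $y$ has either $20$ or $21$ nonzero coordinates with respect to $B'$, according as that coincident pair cancels or survives. Since neither $20$ nor $21$ is a multiple of $\ell+1=6$, the counting lemma shows $y\notin\im\delta$ in both cases; as $y\neq0$ and $\delta'(y)=0$, this yields $b_5\neq0$, hence $\min\{\,\ell\mid b_{N_\Delta-\ell}\neq0\,\}\leq 5=\lw(2\Upsilon)+1$. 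The one step requiring thought is producing the five blocks: one starts from any block with a single bad term (e.g.\ $\omega_1$), repeatedly adjoins a block killing the bad term just introduced, and checks that after four steps the chain closes into a path instead of growing without bound — a short, mechanical search once one knows to look for it.
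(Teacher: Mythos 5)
Your argument is correct and is essentially the paper's own approach: exhibit an explicit cycle in $\wedgepow{5}V_\Delta\otimes V_\Delta$ as $\delta_{\ZZ^2}$ of a suitable $\ZZ^2$-homogeneous element of $\wedgepow{6}V_\Delta\otimes V_{\ZZ^2}$, and rule out membership in $\im\delta$ by the term-counting lemma. The paper's witness is leaner (three blocks, giving a cycle with $12=2(\ell+1)$ terms, so it must additionally argue that a two-term preimage is impossible), whereas your five-block cycle with $20$ or $21$ surviving terms contradicts divisibility by $\ell+1=6$ outright; both routes are valid.
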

\begin{proof}
Here we can assume $\Delta = 2 \Upsilon$ and note that $N_\Delta = 10$ and $\lw(\Delta) = \lw(2\Upsilon) = 4$. 
With $\ell = N_\Delta - d - 1 = 5$, in exactly the same way as before we see that
\begin{align}
\delta_{\ZZ^2}\Big(&(1,0)\wedge(0,1)\wedge(0,0)\wedge(-1,-1)\wedge(-1,0)\wedge(0,-1)\otimes(-1,-1)\nonumber \\
&+(1,0)\wedge(0,1)\wedge(0,0)\wedge(-1,-1)\wedge(0,-1)\wedge(-2,-2)\otimes(0,1)\nonumber \\
&-(1,0)\wedge(0,1)\wedge(0,0)\wedge(-1,-1)\wedge(-1,0)\wedge(-2,-2)\otimes(1,0)\Big)\nonumber
\end{align}
\noindent is a non-zero cycle: it has $12 = 2(\ell +1)$ terms, so if it were in $\im \delta$, then any preimage should
have two terms, and
we leave it to the reader to verify that this again leads to a contradiction. Alternatively, the reader
can just look up the graded Betti table of $X_{2\Upsilon}$ in Appendix~\ref{appendix_data}. \end{proof}

\begin{lemma} \label{Upsilondupperbound}
If $\Delta \cong \Upsilon_d$ for some $d \geq 2$ then $\min \{ \, \ell \, | \, b_{N_\Delta-\ell} \neq 0 \, \} \leq \lw(\Delta) + 1$.
\end{lemma}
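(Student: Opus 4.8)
The plan is to reduce the statement for $\Upsilon_d$ to the already-established sharper bound for $d\Sigma$ (Lemma~\ref{dSigmaupperbound}), exploiting the combinatorial relationship $\Upsilon_d^{(1)} \cong (d-1)\Sigma$ recorded in Figure~\ref{overviewpols}, together with the Serre-duality description of the $b_\ell$'s. Recall from Section~\ref{section_koszulcohomology} that for any polygon $\Gamma$ the number $b_\ell$ of $X_\Gamma$ is the dimension of the cohomology in the middle of \eqref{cohombelldual}, namely $\wedgepow{N_\Gamma - 2 - \ell} V_\Gamma \otimes V_{\Gamma^{(1)}} \to \wedgepow{N_\Gamma - 3 - \ell} V_\Gamma \otimes V_{(2\Gamma)^{(1)}} \to \wedgepow{N_\Gamma - 4 - \ell} V_\Gamma \otimes V_{(3\Gamma)^{(1)}}$. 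The key point is that in the range relevant to the end of the linear strand, the spaces $V_{(q\Upsilon_d)^{(1)}}$ are governed by $(d-1)\Sigma$ and its dilates: one has $\Upsilon_d^{(1)} \cong (d-1)\Sigma$, and more generally one should check that $(q\Upsilon_d)^{(1)}$ contains (a translate of) $q(d-1)\Sigma + \text{(something)}$ in a way that lets a non-exact cycle for $d\Sigma$ be transported.

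First I would set $\ell = N_\Delta - d - 2$ for $\Delta = \Upsilon_d$ (noting $\lw(\Upsilon_d) = d+1$, so this is the index "$\lw(\Delta)+1$ from the right"), and compute $N_{\Upsilon_d}$ explicitly via Pick's theorem: $\Upsilon_d = \conv\{(-1,-1),(d,0),(0,d)\}$ has area and boundary count that give $N_{\Upsilon_d} = \binom{d+2}{2} + 2$ or similar — this needs to be pinned down. Second, following the template of the alternative proof of Lemma~\ref{Kp1upperbound} and of Lemma~\ref{dSigmaupperbound}, I would write down an explicit element of $\wedgepow{\ell+1} V_\Delta \otimes V_{\ZZ^2}$, obtained by taking the non-exact cycle constructed for $d\Sigma$ (which lives in $\wedgepow{d(d+1)/2} V_{d\Sigma} \otimes V_{d\Sigma}$) and "padding" it with the wedge of the lattice points of $\Upsilon_d$ that lie outside the copy of $d\Sigma$, then applying $\delta_{\ZZ^2}$. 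The resulting $y$ lands in $\wedgepow{\ell} V_\Delta \otimes V_\Delta$, hence in $\ker \delta'$; the counting lemma (on the number of non-zero $B'$-coordinates of $y$ versus those of any putative preimage) then shows $y \notin \im\delta$, exactly as in the $d\Sigma$ and $2\Upsilon$ cases.

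The main obstacle I anticipate is the bookkeeping needed to verify that the padded tensor really does land in $V_\Delta \otimes V_\Delta$ rather than spilling outside $\Delta$: one must check that the lattice points of $\Upsilon_d$ not coming from the embedded $d\Sigma$, when added to the monomials $P_s + (\text{shift})$ appearing in $\delta_{\ZZ^2}$, still produce points of $\Upsilon_d$ (so the second tensor factor stays in $V_\Delta$) and that the wedge part stays in $\wedgepow{\ell} V_\Delta$. An alternative, possibly cleaner, route is to argue purely homologically: since $\Upsilon_d^{(1)} \cong (d-1)\Sigma = (d\Sigma)^{(1)}$ and the relevant dilated interiors match up, the twisted-by-$K$ Koszul cohomology $K_{\star,2}(X_{\Upsilon_d}; K, L)$ computing $b_\ell$ via \eqref{cohombelldual} should surject onto (or contain) the corresponding group for $d\Sigma$ in the top linear degree; establishing such a comparison map and its non-triviality would sidestep the explicit cycle entirely. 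Either way the non-vanishing of $b_{N_\Delta - d - 2}$ for $\Upsilon_d$ follows, which is exactly the claimed bound $\min\{\ell \mid b_{N_\Delta - \ell} \neq 0\} \leq \lw(\Delta) + 1$.
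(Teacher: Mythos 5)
Your reduction-to-$d\Sigma$ instinct is right, but the concrete construction has a genuine gap, and it traces back to the number you left ``to be pinned down'': one checks that $\Upsilon_d \cap \ZZ^2 = (d\Sigma \cap \ZZ^2) \cup \{(-1,-1)\}$, so $N_{\Upsilon_d} = \binom{d+2}{2}+1$, not $+2$. Consequently the target column is $\ell = N_{\Upsilon_d} - (\lw(\Upsilon_d)+1) = N_{d\Sigma} - d - 1 = d(d+1)/2$, which is exactly the column in which the cycle of Lemma~\ref{dSigmaupperbound} already lives. ``Padding'' that cycle with the unique extra point $(-1,-1)$ therefore lands one column too far: it amounts to trying to prove $b_{N_\Delta-d-1}\neq 0$, i.e.\ the bound $\lw(\Delta)$, which is stronger than the lemma and is expected to be false for $\Upsilon_d$ (Conjecture~\ref{Kp1conjecture} predicts the minimum is exactly $\lw(\Delta)+1$ there). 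Moreover the padded element is not even a cycle of the right complex: applying $\delta_{\ZZ^2}$ moves $(-1,-1)$ into the tensor slot and produces second factors $(-1,-1)+(1,0)=(0,-1)$ and $(-1,-1)+(0,1)=(-1,0)$, neither of which lies in $\Upsilon_d$, so the result does not belong to $\wedgepow{\ell+1}V_\Delta\otimes V_\Delta$; the bookkeeping obstacle you flag is fatal as stated. Your fallback ``homological'' route is not yet an argument either: the asserted surjection or containment between the twisted Koszul groups of $\Upsilon_d$ and $d\Sigma$ is precisely the nontrivial point that would have to be proved.

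The repair is immediate once $N_{\Upsilon_d}$ is computed correctly: no padding at all. Since $d\Sigma\subseteq\Upsilon_d$, the element $y$ from the proof of Lemma~\ref{dSigmaupperbound} lies in $\wedgepow{\ell}V_{\Upsilon_d}\otimes V_{\Upsilon_d}$ with $\ell=d(d+1)/2=N_{\Upsilon_d}-d-2$, is still killed by $\delta'$, and still has $2\ell$ nonzero coordinates with respect to $B'$; the counting lemma (valid for any polygon) then rules out $y\in\im\delta$ because $\ell+1$ does not divide $2\ell$ for $\ell\geq 3$, giving $b_{N_\Delta-d-2}\neq 0$ and hence the claimed bound. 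For comparison, the paper avoids Koszul cycles altogether here: it observes that the equations of $X_{\Upsilon_d}$ not involving $X_{-1,-1}$ define $X_{d\Sigma}$, so the linear strand of $X_{d\Sigma}$ is a summand of that of $X_{\Upsilon_d}$, and the bound then follows from Lemma~\ref{dSigmaupperbound} via the index shift $N_{\Upsilon_d}=N_{d\Sigma}+1$; either route works, but both hinge on that lattice-point count you skipped.
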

\begin{proof}
From the combinatorics of $\Upsilon_d$ it is clear that 
if one restricts to those equations of $X_{\Upsilon_d}$ not involving
$X_{-1,-1}$, one obtains a set of defining equations for $X_{d \Sigma}$.
Thus the linear strand of the graded Betti table of $X_{d\Sigma}$
is a summand of the linear strand of the graded Betti table of $X_\Delta$.
From Lemma~\ref{dSigmaupperbound} we conclude that 
\[ \min \{ \, \ell \, | \, b_{N_\Delta-\ell} \neq 0 \, \} \leq \min \{ \, \ell \, | \, b_{N_{d\Sigma}-\ell} \neq 0 \, \} + 1 \leq \lw(d\Sigma) + 2 = d + 2. \]
The lemma follows from the observation that $\lw(\Delta) = d + 1$.
\end{proof}

\subsection{Conclusion}

Summarizing the results in this section, we state:

\begin{theorem} \label{Kp1conjectureoneinequality}
If $\Delta \not \cong \Sigma, \Upsilon$ then one has $\min \{ \, \ell \, | \, b_{N_\Delta-\ell} \neq 0 \, \} \leq \lw(\Delta) + 2$. If
\[ \Delta \cong d\Sigma \text{ for some $d \geq 2$} \ \quad \ \text{or} \ \quad \ \Delta \cong \Upsilon_d \text{ for some $d \geq 2$} \ \quad \ \text{or} \ \quad \ \Delta \cong 2 \Upsilon \]
then moreover one has the sharper bound $\lw(\Delta) + 1$. In other words the sharpest applicable upper bound predicted
by Conjecture~\ref{Kp1conjecture} holds.
\end{theorem}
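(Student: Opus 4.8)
The plan is to obtain Theorem~\ref{Kp1conjectureoneinequality} by assembling the partial results already proved in this section; the proof is short, the only genuine content being a combinatorial lemma ensuring that the quantities in the statement are meaningful.

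First I would establish that $N_\Delta \geq \lw(\Delta) + 3$ for every two-dimensional lattice polygon $\Delta$ with $\Delta \not\cong \Sigma, \Upsilon$. One always has $N_\Delta \geq \lw(\Delta) + 2$: after placing $\Delta$ in a strip $\RR \times [0,d]$ with $d = \lw(\Delta)$, every horizontal slice meets $\Delta \cap \ZZ^2$ and two-dimensionality forces some slice to contain at least two lattice points, so $N_\Delta = \sum_{j=0}^{d}(M_j - m_j + 1) \geq d + 2$. To upgrade this I would induct, using the lattice-width recursion recalled in the introduction. If $\Delta^{(1)} = \emptyset$, then by Koelman's classification~\cite[Ch.\,4]{koelmanthesis} $\Delta \cong 2\Sigma$ or a Lawrence prism, and among these only $\Sigma$ has $N_\Delta = \lw(\Delta) + 2$. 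If $\dim \Delta^{(1)} \leq 1$ then $\lw(\Delta^{(1)}) = 0$, so $\lw(\Delta) \in \{2,3\}$; when $\lw(\Delta) = 3$ the recursion forces $\Delta \cong 3\Sigma$ (which has $N_\Delta = 10$), and when $\lw(\Delta) = 2$ a short count shows $N_\Delta \geq \lw(\Delta)+3$ unless $N_\Delta = 4$, which by Pick forces $\Delta$ to be a lattice triangle with three boundary points and one interior point — elementarily the unique such polygon up to unimodular equivalence, namely $\Upsilon$. Finally, if $\dim \Delta^{(1)} = 2$ then $N_\Delta = N_{\Delta^{(1)}} + |\partial\Delta \cap \ZZ^2| \geq N_{\Delta^{(1)}} + 3$, while the inductive hypothesis applied to $\Delta^{(1)}$ — whose lattice width is at least $\lw(\Delta) - 3$ — gives $N_{\Delta^{(1)}} \geq \lw(\Delta)$, whence $N_\Delta \geq \lw(\Delta) + 3$. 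Granting all this, for $\Delta \not\cong \Sigma, \Upsilon$ the index $\ell_0 := N_\Delta - \lw(\Delta) - 2$ satisfies $1 \leq \ell_0 \leq N_\Delta - 3$ — the upper bound because $\lw(\Delta) \geq 1$ for two-dimensional $\Delta$ — so $b_{\ell_0}$ is a genuine entry of the linear strand and every minimum $\min \{\, \ell \mid b_{N_\Delta - \ell} \neq 0 \,\}$ appearing in the theorem is taken over a non-empty set.

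With this in hand the theorem is pure bookkeeping. The general inequality $\min \{\, \ell \mid b_{N_\Delta - \ell} \neq 0 \,\} \leq \lw(\Delta) + 2$ is Lemma~\ref{Kp1upperbound}, whose constructive proof now legitimately exhibits a non-zero $b_{N_\Delta - \lw(\Delta) - 2}$. In the three exceptional families the sharper bound $\lw(\Delta) + 1$ is Lemma~\ref{dSigmaupperbound} for $\Delta \cong d\Sigma$ ($d \geq 2$), Lemma~\ref{Upsilondupperbound} for $\Delta \cong \Upsilon_d$ ($d \geq 2$), and Lemma~\ref{Upsilonupperbound} for $\Delta \cong 2\Upsilon$; in each case one checks at a glance that $\lw(\Delta) + 1 \in \{1, \dots, N_\Delta - 3\}$, so the statement is really about a table entry. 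Finally, comparing the values $\lw(\Delta) + 2$ and $\lw(\Delta) + 1$ with those predicted by Conjecture~\ref{Kp1conjecture} shows that we have in each case proved exactly the sharpest upper bound the conjecture permits, which is the content of the last sentence. The only part of this requiring any thought is the combinatorial lemma of the previous paragraph — the classification of two-dimensional polygons with $N_\Delta = \lw(\Delta) + 2$ — and this is where I expect the main (though elementary) effort to lie; everything else amounts to quoting the appropriate lemma and verifying that indices lie in range.
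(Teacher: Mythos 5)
Your assembly is exactly the paper's: Theorem~\ref{Kp1conjectureoneinequality} is proved there simply by combining Lemma~\ref{Kp1upperbound} with Lemmas~\ref{dSigmaupperbound}, \ref{Upsilonupperbound} and \ref{Upsilondupperbound}, which is what you do. The extra point you make explicit --- that $\Delta \not\cong \Sigma, \Upsilon$ forces $N_\Delta \geq \lw(\Delta)+3$, so that $b_{N_\Delta - \lw(\Delta) - 2}$ is a genuine entry of the linear strand --- is left implicit in the paper, and it is a legitimate observation (both proofs of Lemma~\ref{Kp1upperbound} do need $N_\Delta - \lw(\Delta) - 2 \geq 1$). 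Your combinatorial lemma is true, but the inductive step as you wrote it has a slip: when $\dim \Delta^{(1)} = 2$ you claim the inductive hypothesis gives $N_{\Delta^{(1)}} \geq \lw(\Delta^{(1)}) + 3 \geq \lw(\Delta)$, yet the strong form of the hypothesis is unavailable precisely when $\Delta^{(1)} \cong \Sigma$ or $\Upsilon$; for instance $\Delta = 4\Sigma$ has $\Delta^{(1)} = \Sigma$ and $N_{\Delta^{(1)}} = 3 < 4 = \lw(\Delta)$, so the asserted intermediate inequality is false (the final conclusion $N_{4\Sigma} = 15 \geq 7$ of course holds). The repair is routine: handle $\Delta^{(1)} \cong \Sigma, \Upsilon$ separately --- unless $\Delta \cong 4\Sigma$ (checked directly), one then has $\lw(\Delta) = \lw(\Delta^{(1)}) + 2 \in \{3,4\}$ while $N_\Delta \geq N_{\Delta^{(1)}} + |\partial\Delta \cap \ZZ^2| \geq \lw(\Delta) + 3$; alternatively one can avoid the induction altogether by combining Pick's theorem with the known inequality $\vol(\Delta) \geq \tfrac{3}{8}\lw(\Delta)^2$. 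With that patch your proof is complete and coincides with the paper's argument.
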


\section{Pruning off vertices without changing the lattice width} \label{section_deformation}

\begin{theorem} \label{deformthm}
 Let $\Delta$ be a two-dimensional lattice polygon and let $p \geq 1$. Let $P$ be a vertex of $\Delta$ and define $\Delta' = \conv ( \Delta \cap \ZZ^2 \setminus \{P\} )$, where we assume that $\Delta'$ is two-dimensional. If
$K_{p,1}(X_{\Delta'}, L_{\Delta'}) = 0$ then also 
$K_{p+1,1}(X_{\Delta}, L_{\Delta}) = 0$.
\end{theorem}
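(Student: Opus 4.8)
Since $P$ is a vertex of $\Delta$, deleting it creates no new lattice points, so $\Delta' \cap \ZZ^2 = (\Delta \cap \ZZ^2)\setminus\{P\}$, $N_{\Delta'}=N_\Delta-1$, and $V_\Delta = V_{\Delta'}\oplus k\,x^P$ as a $k$-vector space. Geometrically, forgetting the coordinate $X_P$ turns $\varphi_\Delta$ into $\varphi_{\Delta'}$, so $X_{\Delta'}$ is the image of $X_\Delta$ under the linear projection away from the coordinate point $e_P$; and $e_P$ is the torus-fixed point of $X_\Delta$ attached to the vertex $P$, hence lies on $X_\Delta$. Thus we are looking at an \emph{inner projection}, and the assertion, in the contrapositive form $K_{p+1,1}(X_\Delta,L_\Delta)\neq 0 \Rightarrow K_{p,1}(X_{\Delta'},L_{\Delta'})\neq 0$, is of the expected type: linear syzygies persist under inner projection, with a shift of the homological index by one. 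Throughout we use the Koszul description $b_\ell = \dim K_{\ell,1}(X_\Delta,L_\Delta)$ from Section~\ref{section_koszulcohomology}.

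The plan is to compare the Koszul complex of $X_\Delta$ with that of $X_{\Delta'}$ through two short exact sequences of complexes. Put $R = \bigoplus_{q\geq 0} V_{q\Delta}$ and $R' = \bigoplus_{q\geq 0} V_{q\Delta'}$; the latter is a graded subalgebra of the former because $i\Delta'+j\Delta' = (i+j)\Delta'$, and $R$, $R'$ and $R/R'$ are all graded $\mathrm{Sym}(V_{\Delta'})$-modules. Write $\mathcal K_\bullet(W;M)$ for the Koszul complex $\wedgepow{\bullet} W\otimes M$ of a $\mathrm{Sym}(W)$-module $M$. First, contraction $\iota$ against the functional dual to $x^P$ anticommutes with the Koszul differential (this uses that $R$ is a module over all of $\mathrm{Sym}(V_\Delta)$), and the splitting $\wedgepow{k}V_\Delta = \wedgepow{k}V_{\Delta'}\oplus(x^P\wedge\wedgepow{k-1}V_{\Delta'})$ gives
\[ 0 \longrightarrow \mathcal K_\bullet(V_{\Delta'};R) \longrightarrow \mathcal K_\bullet(V_\Delta;R) \stackrel{\iota}{\longrightarrow} \mathcal K_\bullet(V_{\Delta'};R)[-1] \longrightarrow 0, \]
with an accompanying shift in internal degree by $\deg x^P = 1$. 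Second, applying the exact functor $\mathcal K_\bullet(V_{\Delta'};-)$ to $0\to R'\to R\to R/R'\to 0$ produces a short exact sequence of complexes relating $\mathcal K_\bullet(V_{\Delta'};R)$ to $\mathcal K_\bullet(V_{\Delta'};R')$, whose cohomology is $K_{\bullet,\bullet}(X_{\Delta'},L_{\Delta'})$. The error module $R/R'$ is tiny: $(R/R')_0 = 0$ and $(R/R')_1 = k\,x^P$. Chasing the two long exact sequences in the row $q=1$, using the hypothesis $K_{p,1}(X_{\Delta'},L_{\Delta'})=0$ together with the standard monotonicity $K_{p,1}(X_{\Delta'},L_{\Delta'})=0 \Rightarrow K_{p+1,1}(X_{\Delta'},L_{\Delta'})=0$ (valid because $L_{\Delta'}$ is globally generated with $H^1(X_{\Delta'},qL_{\Delta'})=0$ for all $q\geq 0$), one reduces the vanishing of $K_{p+1,1}(X_\Delta,L_\Delta)$ to the vanishing of certain contributions coming only from the Koszul cohomology of $R/R'$.

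The decisive point, which I expect to be the main obstacle, is to kill those remaining contributions. Because $R/R'$ is concentrated in degrees $\geq 1$ with one-dimensional degree-$1$ part, one computes $K_{p,1}(V_{\Delta'};R/R') = \wedgepow{p}\big\langle\, Q\in\Delta'\cap\ZZ^2 : Q+P\in 2\Delta' \,\big\rangle$ (and likewise in homological degree $p+1$), which is in general nonzero, so these terms genuinely occur. What must be shown is that the relevant connecting homomorphisms carry them injectively into the quadratic strand $K_{\bullet,2}(X_{\Delta'},L_{\Delta'})$ -- equivalently, that the higher partial elimination ideals of $I_\Delta$ at $e_P$ carry no linear syzygies. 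This is exactly where the hypothesis that $P$ is a \emph{vertex} is used: a supporting functional at $P$ controls the combinatorial shape of $2\Delta\setminus 2\Delta'$, which is precisely what makes the underlying monomial-level linear-independence argument go through. Once this injectivity is in place the diagram chase closes, yielding $K_{p+1,1}(X_\Delta,L_\Delta)=0$. One could alternatively assemble the two filtrations into a single filtered complex and run the associated spectral sequence, or invoke the general machinery of syzygies under inner projection, but the combinatorial heart -- controlling $2\Delta\setminus 2\Delta'$ through the vertex $P$ -- remains the same.
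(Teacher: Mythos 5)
Your framework (inner projection from the coordinate point at the vertex $P$, encoded in the two short exact sequences of Koszul-type complexes) is set up correctly, and your computation of the error terms $K_{p,1}(V_{\Delta'};R/R')\cong\wedgepow{p}\bigl\langle\, Q\in\Delta'\cap\ZZ^2 : Q+P\in 2\Delta' \,\bigr\rangle$ is essentially right. But there is a genuine gap, and it is exactly the step you yourself flag as ``the decisive point'': you must show that these error classes are carried injectively by the connecting maps into the quadratic strand of $X_{\Delta'}$ (equivalently, that they contribute nothing to $K_{p+1,1}(X_\Delta,L_\Delta)$), and no argument is offered --- you only assert that this is where the vertex hypothesis enters. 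Since these terms are nonzero in general, the two long exact sequences alone give no vanishing; without that injectivity the chase does not close, and proving it is essentially a reformulation of the theorem itself. (A smaller issue: the monotonicity $K_{p,1}(X_{\Delta'},L_{\Delta'})=0\Rightarrow K_{p+1,1}(X_{\Delta'},L_{\Delta'})=0$ is invoked without proof or reference; make sure a statement in the needed generality is actually available, or avoid relying on it.)

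For comparison, the paper proves the theorem by a direct minimality argument on cycles, which is precisely the mechanism your missing step would require. Suppose $x\in\ker\delta_4\setminus\im\delta_3$ exists in the complex computing $K_{p+1,1}(X_\Delta,L_\Delta)$. Fix a linear form $L$ taking distinct values on the lattice points of $\Delta$ and maximized at $P$, and choose $x$ so that the maximum of $L$ on the support of its wedge parts is minimal, attained at a unique point $P'$. Writing $x=\sum_i\lambda_i\, P'\wedge P_{1,i}\wedge\dots\wedge P_{p,i}\otimes Q_i+(\text{terms without }P'\text{ in the wedge part})$, the cycle condition forces every $Q_i\neq P$ (a term $\otimes(P'+Q_i)$ with $Q_i=P$ would have nothing to cancel against), so $y=\sum_i\lambda_i\, P_{1,i}\wedge\dots\wedge P_{p,i}\otimes Q_i$ is a cycle in the complex for $\Delta'$; by the hypothesis $y=\delta_1(z)$, and then $x'=x+\delta_3(P'\wedge z)$ lies again in $\ker\delta_4\setminus\im\delta_3$ but with strictly smaller $L$-maximum on its support, contradicting minimality. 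If you wish to keep your mapping-cone formulation, this ``peel off the $L$-maximal point'' argument is what you would have to run to prove the injectivity of your connecting maps; at that point the intermediate homological bookkeeping buys little over applying the argument directly to the Koszul complex as the paper does.
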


\begin{proof} Consider
  $$\wedgepow{p+1} V_{\Delta'} \overset{\delta_1}{\longrightarrow}\wedgepow{p} V_{\Delta'} \otimes V_{\Delta'} \overset{\delta_2}{\longrightarrow} \wedgepow{p-1} V_{\Delta'} \otimes V_{2\Delta'} $$
 and
 $$\wedgepow{p+2} V_{\Delta} \overset{\delta_3}{\longrightarrow}\wedgepow{p+1} V_{\Delta} \otimes V_{\Delta} \overset{\delta_4}{\longrightarrow} \wedgepow{p} V_{\Delta} \otimes V_{2\Delta} $$
where the $\delta_i$'s are the usual coboundary maps.
Assuming that $\ker \delta_2 = \im \delta_1$ we will show that $\ker \delta_4 = \im \delta_3$. Suppose the contrary:
 we will find a contradiction.
Let $L:\RR^n\rightarrow\RR$ be a linear form that maps different lattice points in $\Delta$ to different numbers, such that $P$ attains the maximum of $L$ on $\Delta$. This exists because $P$ is a vertex. For any $x\in\wedgepow{p+1} V_{\Delta} \otimes V_{\Delta}$ define its support as the convex hull of the set of $P_{j,i}$'s occurring when expanding $x$ in the form
$$x=\sum_i\lambda_i P_{1,i}\wedge\ldots\wedge P_{p+1,i}\otimes Q_i.$$
Here as in Section~\ref{section_Kp1} we take the notational freedom to write points rather than monomials, and we of course assume that the elementary tensors in the above expression are mutually distinct.
Choose an $x\in\ker\delta_4\setminus \im\delta_3$ such that the maximum that $L$ attains on the support of $x$ is minimal, and let $P' \in \Delta \cap \ZZ^2$ be the unique point attaining this maximum. Rearrange the above expansion as follows:
\begin{equation}\label{writex}
x=\sum_i\lambda_iP'\wedge P_{1,i}\wedge\ldots\wedge P_{p,i}\otimes Q_i+\text{ terms not containing }P'\text{ in the }\wedge\text{ part}
\end{equation}
where all $P_{j,i}$'s are in $\Delta'$ and $Q_i\in \Delta$. We claim that in fact $Q_i \in \Delta'$, i.e.\ none of the $Q_i$'s equals $P$. Indeed, otherwise when applying $\delta_4$ the term $-\lambda_i P_{1,i}\wedge\ldots\wedge P_{p,i}\otimes(P'+Q_i)$ of $\delta_4(x)$ has nothing to cancel against, contradicting that $\delta_4(x)=0$. Let
\begin{equation}\label{definey}
y=\sum_i\lambda_iP_{1,i}\wedge\ldots\wedge P_{p,i}\otimes Q_i \in \wedgepow{p}V_{\Delta'} \otimes V_{\Delta'}.
\end{equation}
We have
$$0=\delta_4(x)=-P'\wedge\delta_2(y)+\text{ terms not containing }P'\text{ in the }\wedge\text{ part}.$$
Because terms of $P'\wedge\delta_2(y)$ cannot cancel against terms without $P'$ in the $\wedge$ part, $\delta_2(y)$ must be zero, and therefore $y\in\im \delta_1 $ by the exactness assumption. So write $y=\delta_1(z)$ with
$$z=\sum_i\mu_iP'_{1,i}\wedge\ldots\wedge P'_{p+1,i} \in \wedgepow{p+1}V_{\Delta'}.$$
Let $P''$ be the point occurring in this expression such that $L(P'')$ is maximal. Since there is no cancellation when applying $\delta_1$ one sees that $P''$ is in the support of $y$, hence in the support of $x$ and therefore $L(P'')<L(P')$. This means that $L$ achieves a smaller maximum on the support of $z$ than on the support of $x$. Finally, let
$$x'=x+\delta_3(P'\wedge z)=x-P'\wedge y-z\otimes P'.$$
Since $x\in \ker \delta_4 \setminus \im \delta_3$ we have $x' \in \ker \delta_4 \setminus \im \delta_3$ and by (\ref{writex}) and (\ref{definey}) one concludes that $L$ will achieve a smaller maximum on the support of $x'$ than on the support of $x$, namely $L(P'')$. This contradicts the choice of $x$.
\end{proof}

This immediately implies the following corollary, which is included in the statement of Theorem~\ref{theoremKp1progress} in the introduction.

\begin{corollary} \label{deformcor}
 Let $\Delta$ and $\Delta'$ be as in the statement of the above theorem. Assume that
 $\lw(\Delta) = \lw(\Delta')$, that $\Delta' \not \cong d \Sigma, \Upsilon_d$ for any $d \geq 1$ and that $\Delta' \not \cong 2\Upsilon$. If Conjecture~\ref{Kp1conjecture} holds for $\Delta'$ then it also
 holds for $\Delta$.
\end{corollary}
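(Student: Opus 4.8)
The plan is to combine the vanishing-propagation of Theorem~\ref{deformthm} with the rational normal scroll bound of Lemma~\ref{Kp1upperbound}. Throughout, write $d=\lw(\Delta)=\lw(\Delta')$, abbreviate $b^{\Delta'}_\ell=\dim K_{\ell,1}(X_{\Delta'},L_{\Delta'})$ and $b^{\Delta}_\ell=\dim K_{\ell,1}(X_{\Delta},L_{\Delta})$, and note that $N_\Delta=N_{\Delta'}+1$: since $P$ is a vertex of $\Delta$ it lies in no convex combination of the remaining lattice points, so $\Delta'\cap\ZZ^2=(\Delta\cap\ZZ^2)\setminus\{P\}$. Because $\Delta'$ is, by hypothesis, none of $\Sigma$, $\Upsilon$, $d'\Sigma$ or $\Upsilon_{d'}$ ($d'\geq 2$) or $2\Upsilon$, the validity of Conjecture~\ref{Kp1conjecture} for $\Delta'$ is the single equality $\min\{\,\ell\mid b^{\Delta'}_{N_{\Delta'}-\ell}\neq 0\,\}=d+2$, equivalently $K_{p,1}(X_{\Delta'},L_{\Delta'})=0$ for $p=N_{\Delta'}-d-1,\dots,N_{\Delta'}-1$.

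First I would feed each of these $d+1$ vanishings into Theorem~\ref{deformthm}, obtaining $K_{p+1,1}(X_\Delta,L_\Delta)=0$ over the same range of $p$. Writing $q=p+1$ and using $N_{\Delta'}=N_\Delta-1$, this says $b^{\Delta}_{N_\Delta-\ell}=0$ for $\ell=1,\dots,d+1$, hence $\min\{\,\ell\mid b^{\Delta}_{N_\Delta-\ell}\neq 0\,\}\geq d+2$. Next I would invoke Lemma~\ref{Kp1upperbound} for the reverse inequality $\min\{\,\ell\mid b^{\Delta}_{N_\Delta-\ell}\neq 0\,\}\leq\lw(\Delta)+2=d+2$, so that the two combine to $\min\{\,\ell\mid b^{\Delta}_{N_\Delta-\ell}\neq 0\,\}=\lw(\Delta)+2$.

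Finally I would check that $\Delta$ is covered by the generic clause of Conjecture~\ref{Kp1conjecture}, i.e.\ that $\Delta\not\cong\Sigma,\Upsilon$ and that $\Delta$ is not one of the exceptional polygons $d'\Sigma$ or $\Upsilon_{d'}$ ($d'\geq 2$) or $2\Upsilon$; then the value $\lw(\Delta)+2$ just obtained is exactly what is predicted and we are done. Here $\Delta\not\cong\Sigma$ is immediate from $N_\Delta\geq 4$; if $\Delta\cong\Upsilon$ the linear strand of $X_\Delta$ would vanish identically, contradicting $\min\{\,\ell\mid b^{\Delta}_{N_\Delta-\ell}\neq 0\,\}=d+2$; and if $\Delta$ were one of the three exceptional families, Theorem~\ref{Kp1conjectureoneinequality} would force the sharper bound $\min\{\,\ell\mid b^{\Delta}_{N_\Delta-\ell}\neq 0\,\}\leq\lw(\Delta)+1$, again a contradiction. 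So these cases are excluded automatically and no case analysis is required. I do not anticipate a genuine obstacle here: Theorem~\ref{deformthm} carries all the weight, and what remains is the re-indexing of the second paragraph together with this short elimination of the pathological polygons for $\Delta$, which itself reduces to a one-line appeal to Theorem~\ref{Kp1conjectureoneinequality}.
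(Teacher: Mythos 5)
Correct, and essentially the paper's own route: the paper presents the corollary as an immediate consequence of Theorem~\ref{deformthm}, exactly as you argue, with the matching upper bound supplied by Lemma~\ref{Kp1upperbound} (Theorem~\ref{Kp1conjectureoneinequality}) and the exceptional families ruled out by the sharper bound against your lower bound. The only blemish is your elimination of $\Delta \cong \Upsilon$, which is mildly circular since the upper bound you contradict is only valid for $\Delta \not\cong \Sigma, \Upsilon$; it is cleaner to note that $N_\Delta = N_{\Delta'}+1 \geq 5$ (because $\Delta'$ is two-dimensional and $\not\cong \Sigma$), so $\Delta \not\cong \Sigma, \Upsilon$ outright, after which your argument goes through verbatim.
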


In order to deduce Conjecture~\ref{Kp1conjecture} for polygons having a small lattice width, we note the following.

\begin{lemma}
 Let $\Delta$ be a two-dimensional lattice polygon, let $d = \lw(\Delta)$, and assume 
 that removing an extremal lattice point makes the lattice width decrease, i.e.\
 for every vertex $P \in \Delta$ it holds that 
 \[ \lw(\conv ( \Delta \cap \ZZ^2 \setminus \{P\})) < d. \]
 Then there exists a unimodular transformation mapping $\Delta$ into $[0,d] \times [0,d]$.
\end{lemma}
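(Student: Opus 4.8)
The plan is to analyze what the hypothesis forces at each vertex of $\Delta$. Fix a vertex $P$ of $\Delta$ and set $\Delta_P = \conv(\Delta \cap \ZZ^2 \setminus \{P\})$; by hypothesis $\lw(\Delta_P) \leq d-1$. On the other hand $\Delta_P \subseteq \Delta$, so $\lw(\Delta_P) \leq d$ trivially; the content is that the drop actually occurs, so some direction functional that was ``blocked'' by $P$ now certifies a narrower strip. Concretely, after a unimodular change of coordinates we may assume $\Delta \subseteq \RR \times [0,d]$ realizes the lattice width, so the functional $(x,y) \mapsto y$ attains both values $0$ and $d$ on $\Delta \cap \ZZ^2$. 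The first step is to understand which lattice points of $\Delta$ sit on the two horizontal edges $y=0$ and $y=d$: if some line $y = j$ with $0 < j < d$ contained two or more lattice points, or if either extreme line $y \in \{0,d\}$ contained $\geq 2$ lattice points not all of which are the single vertex $P$, then deleting $P$ would not decrease the lattice width in the $y$-direction, and one would need a genuinely different functional to witness the decrease.

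The second step is to exploit this rigidity to pin down the shape of $\Delta$. I expect the hypothesis to force $\Delta \cap \ZZ^2$ to meet each horizontal line $y=j$ ($0 \le j \le d$) in a controlled way — roughly, the ``width'' in the $x$-direction at each height is itself bounded by $d$. The cleanest route is probably: among all unimodular representatives with $\Delta \subseteq \RR \times [0,d]$, choose one minimizing the horizontal extent $\max_{(i,j)} i - \min_{(i,j)} i$; call this extent $w$. If $w \leq d$ we are done after translating, so assume $w \geq d+1$ and derive a contradiction. The horizontal-extremal lattice points on the left edge ($x = \min i$) and right edge ($x = \max i$) are vertices of $\Delta$, and deleting any one of them must decrease $\lw$. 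But deleting a single vertex from a ``wide'' polygon should not help unless the polygon is extremely thin in a skew direction — and that skew direction, combined with $\Delta \subseteq \RR\times[0,d]$, should force $w$ itself to be small, contradicting $w \geq d+1$. Making this dichotomy precise is where the recursive lattice-width formula $\lw(\Delta) = \lw(\Delta^{(1)}) + 2$ or $+3$ and the classification input (the polygons of a given lattice width being ``inclusion-minimal'') may be invoked, or alternatively a direct case analysis on the slopes of the edges adjacent to each extremal vertex.

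The main obstacle I anticipate is the bookkeeping in the second step: ruling out that $\Delta$ is long-and-thin in some direction oblique to both the horizontal strip and to $[0,d]\times[0,d]$. One has two strip constraints (height $\le d$ in the $y$-direction, and — to be proven — width $\le d$ in the $x$-direction) and one must show they can be realized simultaneously. The natural tool is that a vertex deletion decreasing the lattice width means $P$ lies ``outside'' the width-$(d-1)$ slab of $\Delta_P$ in some primitive direction $u$, so $\langle u, P \rangle$ is one of the two extreme values of $u$ on $\Delta \cap \ZZ^2$ and no other lattice point shares it; running this over the finitely many vertices and intersecting the resulting slabs should leave only the directions spanning a sublattice that embeds $\Delta$ into the square. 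I would expect that once the horizontal strip is fixed, the only dangerous competing direction is the transpose coordinate, and a short argument comparing the number of lattice points on extreme lines finishes it. If the direct argument gets unwieldy, the fallback is to cite the classification of inclusion-minimal lattice polygons of given lattice width from~\cite{coolslemmens} and observe that each such polygon visibly fits in $[0,d] \times [0,d]$, since the hypothesis says precisely that $\Delta$ is such a minimal polygon.
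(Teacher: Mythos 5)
Your proposal correctly identifies the two things that must be established -- that the hypothesis forces a second strip constraint besides the horizontal one, and that the two strip constraints can be realized simultaneously by one unimodular map -- but it proves neither, so there is a genuine gap rather than a complete alternative argument. The crucial step in your plan (``deleting a single vertex from a wide polygon should not help unless the polygon is extremely thin in a skew direction, \ldots\ should force $w$ itself to be small'') is exactly the heart of the lemma and is left at the level of ``should''/``I expect''. The paper closes this gap with a short but specific argument you never reach: outside the easy cases ($\Delta^{(1)}=\emptyset$ or $\Delta^{(1)}\cong d'\Sigma$) one has $\lw(\Delta^{(1)})=d-2$ and, by \cite[Thm.\,13]{lubbesschicho}, $\Delta$ and $\Delta^{(1)}$ have the \emph{same} lattice width directions; choosing $P$ different from a fixed vertex at height $0$ and one at height $d$, any direction $u$ witnessing $\lw(\conv(\Delta\cap\ZZ^2\setminus\{P\}))\leq d-1$ is non-horizontal and pinches $\Delta^{(1)}$ to width at most $d-2$, so $u$ must be a lattice width direction of $\Delta^{(1)}$ and hence of $\Delta$. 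This yields two independent lattice width directions, which is the concrete structural consequence your ``intersecting slabs'' paragraph gestures at but does not derive.

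The second unresolved point is the passage from two independent width-$d$ directions to $\Delta\subseteq[0,d]\times[0,d]$: you flag it (``one must show they can be realized simultaneously'') and hope for ``a short argument comparing the number of lattice points on extreme lines'', but this is a genuine statement needing proof; the paper invokes the remark following \cite[Lem.\,5.2]{linearpencils} for it. Finally, your fallback -- citing the classification of minimal polygons in \cite{coolslemmens} and ``observing'' that each fits in the square -- is not an argument you carry out (you would need the actual classification in hand), and since the hypothesis of the lemma is precisely the definition of minimality, it amounts to outsourcing the entire content of the lemma. A minor further remark: the discussion of lines $y=j$ with $0<j<d$ containing two or more lattice points is beside the point; all that matters is whether $P$ is the unique lattice point at height $0$ or $d$, which is why the paper simply fixes one vertex at each extreme height and lets $P$ be any other vertex.
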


\begin{proof}
 The cases where $\Delta^{(1)} \cong \emptyset$ or where $\Delta^{(1)} \cong d \Sigma$ for some $d \geq 0$ are easy to verify.
 In the other cases $\lw(\Delta^{(1)}) = \lw(\Delta) - 2 = d - 2$ and the lattice width directions for $\Delta$ and $\Delta^{(1)}$ are the same~\cite[Thm.\,13]{lubbesschicho}.
 Assume that $\Delta \subseteq \RR \times [0,d]$, fix a vertex on height $0$ and a vertex on height $d$, and let $P$ be any other vertex. 
 Then $\lw(\conv ( \Delta \cap \ZZ^2 \setminus \{P\})) \leq d-1$, 
 where we note that a corresponding lattice width direction is necessarily non-horizontal, and that along such a direction the width of $\Delta^{(1)}$ is at most $d-2$. But then equality must hold, and in particular it must also
 concern a lattice width direction for $\Delta^{(1)}$, hence it must concern a lattice width direction for $\Delta$. We conclude
 that $\Delta$ has two independent lattice width directions, and the lemma follows from the remark following~\cite[Lem.\,5.2]{linearpencils}.
\end{proof}

Let us call a lattice polygon $\Delta$ as in the statement of the foregoing lemma `minimal',
and note that this attribute applies to each of the exceptional polygons $d\Sigma, \Upsilon_d, 2\Upsilon$ mentioned in the statement of Conjecture~\ref{Kp1conjecture}.
In order to prove Conjecture~\ref{Kp1conjecture} for a certain non-exceptional polygon $\Delta$, by Corollary~\ref{deformcor}
it suffices to do this for any lattice polygon obtained by repeatedly pruning off vertices without changing the lattice width.
Thus the proof reduces to verifying the case of a minimal lattice polygon, unless it concerns one of
the exceptional cases $d\Sigma, \Upsilon_d, 2\Upsilon$, in which case one needs to stop pruning one step earlier (otherwise this strategy has no
chance of being successful).

In other words the above lemma implies that if Conjecture~\ref{Kp1conjecture} is true for all lattice polygons $\Delta$ for which $N_\Delta \leq (d+1)^2 + 1$,
then it is true for all lattice polygons $\Delta$ with $\lw(\Delta) \leq d$.
This observation, along with our exhaustive verification in the cases where $N_\Delta \leq 32$, reported upon in Section~\ref{section_computing}, allows us to conclude
that Conjecture~\ref{Kp1conjecture} is true as soon as $\lw(\Delta) \leq 4$. This fact will be used in the proof of
our explicit formula for $b_{N_\Delta - 4}$. 

But one can do better: in a spin-off paper~\cite{coolslemmens} devoted to minimal polygons, the second and the fourth author show
that if $\Delta$ is a minimal lattice polygon with $\lw(\Delta) \leq d$ then
\[ N_\Delta \leq \max \left\{ (d-1)^2 + 4, (d+1)(d+2)/2 \right\}. \]
From this, using a similar reasoning, the conjecture follows for $\lw(\Delta) \leq 6$, as announced in the statement of Theorem~\ref{theoremKp1progress}.

\section{Explicit formula for $b_{N_\Delta-4}$} \label{section_formulabN4}

In this section we will prove Theorem~\ref{thirdentry}, whose statement distinguishes between the following four cases:
\[ \left\{  \begin{array}{l} \Delta^{(1)} = \emptyset, \\ \dim \Delta^{(1)} = 0, \\ 
\dim \Delta^{(1)} = 1 \text{ or } \Delta \cong \Upsilon_2, \\
\dim \Delta^{(1)} = 2 \text{ and } \Delta\not \cong\Upsilon_2. \\
 \end{array} \right. \]
We will treat these cases in the above order, which as we will see corresponds to increasing order of difficulty.
The first case where $\Delta^{(1)} = \emptyset$ follows trivially from \eqref{toricbettileeginwendige}, so we can skip it.
Now recall
from (\ref{cohombelldual}) that $b_{N_\Delta-4}$ is the dimension of the cohomology in the middle of
\[
    \wedgepow{2} V_\Delta \otimes V_{\Delta^{(1)}} 
    \stackrel{\delta}{\longrightarrow}
    V_\Delta \otimes V_{(2\Delta)^{(1)}}
    \stackrel{\delta'}{\longrightarrow}
    V_{(3\Delta)^{(1)}}.
\]
Because $K_{0,3}(X;K,L) \cong K_{N_\Delta - 3,0}(X,L) = 0$, where we use that $\Delta \not \cong \Sigma$, we have that the map $\delta'$ is surjective.
In particular we obtain the formula
\[ b_{N_\Delta-4} = \dim \coker \delta - | (3\Delta)^{(1)}\cap\ZZ^2|. \]

\subsubsection*{Case $\dim \Delta^{(1)} = 0$}

If $\dim \Delta^{(1)}=0$ then $\delta$ is injective, so 
\[ b_{N_\Delta-4}= \dim (V_\Delta \otimes V_{(2\Delta)^{(1)}})- \dim(\wedgepow{2} V_\Delta)- |(3\Delta)^{(1)}\cap\ZZ^2| =(N_{\Delta}-4)(N_\Delta-1)/2, \] 
as can be calculated using Pick's theorem, thereby yielding Theorem~\ref{thirdentry} in this case (alternatively, one
can give an exhaustive proof by explicitly computing the graded Betti tables of the toric surfaces associated to the $16$ reflexive lattice polygons).

\subsubsection*{Case $\dim \Delta^{(1)} = 1$ or $\Delta \cong\Upsilon_2$}

The graded Betti table of $X_{\Upsilon_2}$ can be found in Appendix~\ref{appendix_data}, where one verifies
that $b_{N_{\Upsilon_2} - 4} = b_3 = 3$, as indeed predicted by the statement of Theorem~\ref{thirdentry}.
Therefore we can assume that $\dim \Delta^{(1)}=1$. The polygons
$\Delta$ having a one-dimensional interior were explicitly classified by Koelman~\cite[\S4.3]{koelman}, but in any case it
is easy to see that, using a unimodular transformation if needed, we can assume that 
\[ \Delta= \conv \{(m_1,1),(M_1,1),(m_0,0),(M_0,0),(m_{-1},-1),(M_{-1},-1)\} \] 
for some $m_i \leq M_i\in\ZZ$. Here $m_0 < M_0$ can be taken such that
\[ \Delta \cap (\ZZ \times \{0\}) = \{ m_0, m_0 + 1, \dots, M_0 \} \times \{0 \}. \]
Write $\Delta^{(1)}=[u,v]\times\{0\}$, then 
\[
 (2\Delta)^{(1)}=\Delta+\Delta^{(1)}=\conv\{ (m_i+u,i), (M_i+ v,i) \, | \, i=1,0,-1 \}.
\]
Now consider $V_\mathbb{Z} = k[x^{\pm 1}]$ and define a morphism
\[ f:V_\Delta\otimes V_{(2\Delta)^{(1)}}\rightarrow k[x_{-1}, x_0, x_1] \otimes V_\ZZ \] 
by letting 
$(a,b)\otimes(c,d) \mapsto x_b x_d \otimes(a+c)$, where again we abusingly describe
the basis elements of $V_\Delta$, $V_{(2\Delta)^{(1)}}$ and $V_\ZZ$ using lattice points
rather than monomials.
Note that 
\[ f( \delta ((a,b)\wedge(c,d)\otimes(e,0)))=f((a,b)\otimes(c+e,d)-(c,d)\otimes(a+e,b))=0, \]
so $\im \delta \subseteq \ker f$. 

We claim that actually equality holds. First note that every element $\alpha \in \ker f$
decomposes into elements
\[ \sum_j\lambda_j(a_j,b_j)\otimes(c_j,d_j) \] 
for which $(\{b_j,d_j\},a_j+c_j)$ is the same for all $j$: indeed, terms for which these are different cannot cancel out when applying $f$. Note that $\sum_j\lambda_j=0$, so
one can rewrite the above as a linear combination of expressions either of the form 
\[ \begin{array}{ccc} \underbrace{(a,b)\otimes(c,d)-(a',b)\otimes(c',d)} & \text{or of the form} & \underbrace{(a,b)\otimes(c,d)-(a',d)\otimes(c',b)} \\ \text{\small (i)} & & \text{\small (ii)} \\ \end{array} \] 
where $a+c=a'+c'$, the points $(a,b), (a',b)$ resp.\ $(a,b),(a',d)$ are in $\Delta$, and the points
$(c,d), (c',d)$ resp.\ $(c,d),(c',b)$ are in $(2\Delta)^{(1)}$. As for case (i), these can be decomposed further as a sum (or minus a sum) of
expressions of the form $(a,b)\otimes(c,d)-(a+1,b)\otimes(c-1,d)$, which can be rewritten as 
\[ \delta((a,b)\wedge(c-e,d)\otimes(e,0)-(a+1,b)\wedge(c-e,d)\otimes(e-1,0)) \]
and therefore as an element of $\im \delta$, at least if $e$ can be chosen in the interval $[ \max(u+1,c-M_d), \min(v,c-m_d)]$. The reader can verify that
this is indeed non-empty, from which the claim follows in this case. As for (ii), with $e$ chosen from the
non-empty interval $[\max(u,c'-M_b), \min(v,c'-m_b)]$ one verifies that
$$\delta((c'-e,b)\wedge(a',d)\otimes(e,0))=(c'-e,b)\otimes(a'+e,d)-(a',d)\otimes(c',b),$$
allowing one to replace (ii) with an expression of type (i), and the claim again follows.

Summing up, we have
\begin{align*}
b_{N_\Delta-4}=& \dim \im f - | (3\Delta)^{(1)}\cap\ZZ^2| \\
=&\sum_{\{i,j\}\subseteq\{-1,0,1\}} \left| [m_i+m_j+u,M_i+M_j+v] \cap \ZZ \right| -\sum_{i'=-2}^2 \left| (3\Delta)^{(1)}\cap(\ZZ\times\{i'\}) \right|.
\end{align*}
Each lattice point of $(3 \Delta)^{(1)} = 2\Delta + \Delta^{(1)}$ appears in an interval on the left, and conversely. To see
this it suffices to note that each lattice point of $2 \Delta$ arises as the sum of two lattice points in $\Delta$, which is a well-known property~\cite{HNPS}.
 So all terms with $i+j\neq 0$ cancel out the terms with $i' \neq 0$, and we are left with
\begin{multline*} \left| [m_1+m_{-1}+u,M_1+M_{-1}+v] \cap \ZZ \right| +  \left| [2m_0+u,2M_0+v] \cap \ZZ \right|  \\
 - \left| (3\Delta)^{(1)}\cap(\ZZ\times\{0\} )\right|.
\end{multline*}
Term by term this equals
\begin{multline*} \left( | \partial \Delta \cap \ZZ^2 | + N_{\Delta^{(1)}} - 2 - \varepsilon  \right) + \left( 2(M_0 - m_0) + N_{\Delta^{(1)}} \right) \\ - \left( 2(M_0 - m_0) + (2-\varepsilon) + N_{\Delta^{(1)}}  \right) \end{multline*}
where $\varepsilon := (u-m_0) + (M_0 - v) \in \{0,1,2\}$ denotes the cardinality of $\partial \Delta \cap (\ZZ \times \{0\})$.
Because the above expression simplifies to $N_{\Delta}-4$, this concludes the proof in the $\dim \Delta^{(1)} = 1$ case.

\subsubsection*{Case $\dim \Delta^{(1)} = 2$ and $\Delta\not \cong\Upsilon_2$}
In this case our task amounts to proving that $b_{N_\Delta - 4} = 0$, but
this follows from Conjecture~\ref{Kp1conjecture} for polygons $\Delta$ satisfying
$\lw(\Delta) \leq 4$, which was verified in Section~\ref{section_deformation}.

\section{Quotienting the Koszul complex} \label{section_alexandertruck}

We now start working towards an algorithmic determination of the graded Betti table of the toric surface $X_\Delta \subseteq \PP^{N_\Delta - 1}$ associated
to a given two-dimensional lattice polygon $\Delta$. Essentially, the method is about reducing the dimensions of the vector spaces involved, in order to make the linear
algebra more manageable. This is mainly done by incorporating bigrading and duality.
However when dealing with large polygons a further reduction is useful.
In this section we show that the Koszul complex always admits certain exact subcomplexes that can be described in a combinatorial way.
Quotienting out such a subcomplex does not affect the cohomology, while
making the linear algebra easier, at least in theory. 
For reasons we don't understand our practical implementation shows that the actual gain in runtime is somewhat unpredictable:
sometimes it is helpful, but other times the contrary is true. But it is worth the try, and
in any case we believe that the material below is also interesting from a theoretical point of view.

We first introduce the subcomplex from an algebraic point of view, then reinterpret things combinatorially, and finally specify our discussion
to the case of the Veronese surfaces $X_{d\Sigma}$. In the latter setting
the idea of quotienting out such an exact subcomplex is not new: for instance it appears in the recent paper by Ein, Erman and 
Lazarsfeld~\cite[p.\,2]{EinErmanLazarsfeld}.

\subsection{An exact subcomplex}\label{section_quotient}
We begin with the following lemma, which should be known to specialists, but we include a proof for the reader's convenience.
\begin{lemma}\label{quotient}
Let $M$ be a graded module over $k[x_1,\ldots,x_N]$ and suppose that the multiplication-by-$x_N$ map $M \rightarrow M$ is an injection. 
Then the Koszul complexes 
$$\ldots\rightarrow\wedgepow{p+1} V\otimes M \rightarrow \wedgepow{p} V\otimes M\rightarrow\wedgepow{p-1} V\otimes M\rightarrow\ldots$$
and 
$$\ldots\rightarrow\wedgepow{p+1} W\otimes M/(x_N M)\rightarrow\wedgepow{p} W\otimes M/(x_NM)\rightarrow\wedgepow{p-1} W\otimes M/(x_NM)\rightarrow\ldots$$
have the same graded cohomology. Here $V$ and $W$ denote the degree one parts of the polynomial rings $k[x_1,\ldots,x_N]$ and $k[x_1,\ldots,x_{N-1}]$, respectively.
\end{lemma}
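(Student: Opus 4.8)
The plan is to identify the first Koszul complex as the mapping cone of multiplication by $x_N$ acting on the Koszul complex $\wedgepow{\bullet} W \otimes M$, and then invoke the standard fact that the mapping cone of a monomorphism of complexes is quasi-isomorphic to its cokernel.

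Concretely, I would first split $V = W \oplus k x_N$, which gives $\wedgepow{p} V = \wedgepow{p} W \oplus \left(\wedgepow{p-1} W \wedge x_N\right)$; identifying $\eta \wedge x_N$ with $\eta$ turns this into a decomposition of graded vector spaces
\[ \wedgepow{p} V \otimes M \ \cong\ \left(\wedgepow{p} W \otimes M\right) \oplus \left(\wedgepow{p-1} W \otimes M\right). \]
A direct computation with the coboundary map \eqref{delta} then shows that, under this identification, the differential is block lower-triangular, sending $(a,b)$ to $(\delta_W a \pm x_N b,\ \mp\, \delta_W b)$, where $\delta_W$ denotes the coboundary of the Koszul complex $\wedgepow{\bullet} W \otimes M$. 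In other words $\wedgepow{\bullet} V \otimes M$ is exactly the mapping cone of the chain map
\[ x_N\cdot\ :\ \wedgepow{\bullet} W \otimes M \ \longrightarrow\ \wedgepow{\bullet} W \otimes M \]
that multiplies the $M$-factor by $x_N$. (This is the familiar way of building the Koszul complex on $N$ elements out of the one on the first $N-1$ elements; I would either cite~\cite{eisenbud} or carry out the modest sign bookkeeping explicitly.)

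Since multiplication by $x_N$ on $M$ is injective by hypothesis, it is injective on each $\wedgepow{p} W \otimes M \cong M^{\oplus\binom{N-1}{p}}$, so the chain map above is a \emph{monomorphism} of complexes, whose cokernel is visibly $\wedgepow{\bullet} W \otimes (M/x_N M)$ — the second complex in the statement. The heart of the argument is then the standard homological lemma that the cone of an injective chain map $f\colon C \to D$ is quasi-isomorphic to $D/f(C)$: the natural surjection $\mathrm{Cone}(f) \twoheadrightarrow D/f(C)$ has kernel isomorphic to $\mathrm{Cone}(\mathrm{id}_C)$, which is acyclic, hence the surjection is a quasi-isomorphism. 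Applied to $f = x_N\cdot$, this yields a quasi-isomorphism $\wedgepow{\bullet} V \otimes M \simeq \wedgepow{\bullet} W \otimes (M/x_N M)$, and therefore an isomorphism on cohomology in every homological degree.

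To obtain the graded refinement I would just note that every map occurring above — the coboundary maps, multiplication by $x_N$, and the reduction $M \twoheadrightarrow M/x_N M$ — is homogeneous of a fixed internal degree once $V$ and $W$ are placed in degree one, so the entire argument runs degree by degree and the resulting isomorphism is one of graded vector spaces, giving the claimed equality of graded cohomology. The only slightly delicate point is pinning down the signs in the cone identification; beyond that there is no genuine obstacle, which is consistent with the lemma being "known to specialists."
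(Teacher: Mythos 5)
Your proposal is correct and is essentially the paper's own argument repackaged in mapping-cone language: your short exact sequence $0 \to \ker \to \mathrm{Cone}(x_N\cdot) \to \wedgepow{\bullet} W \otimes M/(x_N M) \to 0$, with $\ker \cong \mathrm{Cone}(\mathrm{id})$ via injectivity of $x_N$, is exactly the paper's sequence of complexes with its maps $\alpha$ and $\beta$, and your appeal to the acyclicity of $\mathrm{Cone}(\mathrm{id})$ corresponds to the paper's explicit contraction of the left column. The long exact cohomology sequence and the degree-by-degree bookkeeping are used identically, so no further comment is needed.
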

\begin{proof}
Denote by $M'$ the graded module $M/(x_NM)$. For every $p\geq 0$ we have a short exact sequence
$$0\longrightarrow \left( \wedgepow{p} W\otimes M \right) \oplus \left( \wedgepow{p-1} W\otimes M \right) \overset{\alpha}{\longrightarrow}\wedgepow{p} V\otimes M\overset{\beta}{\longrightarrow}\wedgepow{p} W\otimes M'\longrightarrow 0,$$
by letting
\begin{multline*}
\alpha \left( \, v_1\wedge\ldots\wedge v_p\otimes m \, , \, w_1\wedge\ldots\wedge w_{p-1}\otimes m' \, \right) \\
= \, v_1\wedge\ldots\wedge v_p\otimes x_Nm \, + \, x_N\wedge w_1\wedge\ldots\wedge w_{p-1}\otimes m'
\end{multline*}
and $\beta(v_1\wedge\ldots\wedge v_p\otimes m)=\pi(v_1)\wedge\ldots\wedge\pi(v_p)\otimes \overline{m}$,
where $\pi:V\rightarrow W$ maps $x_i$ to itself if $i\neq N$ and to zero otherwise, and
$\overline{m}$ denotes the residue class of $m$ modulo $x_N M$. As usual if $p=0$ then it is understood that $\wedgepow{p-1} W\otimes M=0$. 
We leave a verification of the exactness to the reader, but note that the injectivity of the multiplication-by-$x_N$ map is important here.

On the other hand the spaces
\[ C_p = \left( \wedgepow{p} W\otimes M \right) \oplus \left( \wedgepow{p-1} W\otimes M \right) \]
naturally form a long exact sequence 
$\ldots\rightarrow C_2\rightarrow C_1\rightarrow C_0\rightarrow 0$ along the morphisms 
\[ d_p : C_p \rightarrow C_{p-1} : (a,b) \mapsto (-b+\delta_p(a),-\delta_{p-1}(b)) \]
where $\delta_p$ and $\delta_{p-1}$ are the usual coboundary maps, as described in \eqref{delta}. 
Exactness holds because if $d_p(a,b)=0$ then $d_{p+1}(0,-a)=(a,b)$.
Overall we end up with a short exact sequence of complexes:\footnotesize
\[\begin{array}{ccccccccc}
 & & \vdots   & & \vdots  & &  \vdots & & \\
 & & \downarrow & & \downarrow & & \downarrow & & \\
0 &\rightarrow &\wedgepow{p+1} W\otimes M\oplus\wedgepow{p} W\otimes M&\rightarrow&\wedgepow{p+1} V\otimes M&\rightarrow&\wedgepow{p+1} W\otimes M'&\rightarrow& 0 \\
 & & \downarrow & & \downarrow & & \downarrow & & \\
0 &\rightarrow &\wedgepow{p} W\otimes M\oplus\wedgepow{p-1} W\otimes M&\rightarrow&\wedgepow{p} V\otimes M&\rightarrow&\wedgepow{p} W\otimes M'&\rightarrow& 0 \\
 & & \downarrow & & \downarrow & & \downarrow & & \\
 & & \vdots & & \vdots & & \vdots & &
\end{array}\]\normalsize
This gives a long exact sequence in (co)homology, and the result follows from the exactness of the left column.
\end{proof}

Now we explain how to exploit the above lemma for our purposes. 
We can apply it to the Koszul complex 
$$\ldots\rightarrow\wedgepow{p+1} V_\Delta\otimes\bigoplus_{i\geq 0}V_{i\Delta}\rightarrow\wedgepow{p} V_\Delta\otimes\bigoplus_{i\geq 0}V_{i\Delta}\rightarrow\wedgepow{p-1} V_\Delta\otimes\bigoplus_{i\geq 0}V_{i\Delta}\rightarrow\ldots$$
as well as to the twisted Koszul complex
$$\ldots\rightarrow\wedgepow{p+1} V_\Delta\otimes\bigoplus_{i\geq 1}V_{(i\Delta)^{(1)}}\rightarrow\wedgepow{p} V_\Delta\otimes\bigoplus_{i\geq 1}V_{(i\Delta)^{(1)}}\rightarrow\wedgepow{p-1} V_\Delta\otimes\bigoplus_{i\geq 1}V_{(i\Delta)^{(1)}}\rightarrow\ldots$$
These are complexes of graded modules over the polynomial ring whose variables correspond to the lattice points of $\Delta$. 
In both cases the variable corresponding to whatever point $P \in \Delta \cap \ZZ^2$ can be chosen as $x_N$, because 
multiplication by $x_N$ will always be injective. 
Then the lemma yields that we can replace 
$V_{i\Delta}$ by $V_{(i\Delta)\backslash((i-1)\Delta+P)}$ in the first complex, and that we can replace 
$V_{(i\Delta)^{(1)}}$ by $V_{(i\Delta)^{(1)}\backslash(((i-1)\Delta)^{(1)}+P)}$ in the second complex. 
In both cases we must also replace the $V_\Delta$'s in the wedge product by $V_{\Delta\backslash\{P\}}$. 
Splitting these complexes into their graded pieces we conclude that $K_{p,q}(X,L)$ can be computed as the cohomology in the middle of 
\begin{align}
\wedgepow{p+1} V_{\Delta\backslash\{P\}}\otimes V_{((q-1)\Delta)\backslash((q-2)\Delta+P)}&\longrightarrow\wedgepow{p} V_{\Delta\backslash\{P\}}\otimes V_{(q\Delta)\backslash((q-1)\Delta+P)}\nonumber \\
&\longrightarrow\wedgepow{p-1} V_{\Delta\backslash\{P\}}\otimes V_{((q+1)\Delta)\backslash(q\Delta+P)},\nonumber
\end{align}
and that the twisted Koszul cohomology spaces $K_{p,q}(X;K,L)$ can be computed as the cohomology in the middle of
\begin{align}
\wedgepow{p+1} V_{\Delta\backslash\{P\}}\otimes V_{((q-1)\Delta)^{(1)}\backslash((q-2)\Delta+P)^{(1)}}&\longrightarrow\wedgepow{p} V_{\Delta\backslash\{P\}}\otimes V_{(q\Delta)^{(1)}\backslash((q-1)\Delta+P)^{(1)}}\nonumber \\
&\longrightarrow\wedgepow{p-1} V_{\Delta\backslash\{P\}}\otimes V_{((q+1)\Delta)^{(1)}\backslash(q\Delta+P)^{(1)}}.\nonumber
\end{align}
Here for any $A \subseteq \mathbb{Z}^2$ we let $V_A \subseteq k[x^{\pm 1}, y^{\pm 1}]$ denote the space of Laurent polynomials whose support is contained in $A$.

\begin{remark}\label{additionalrule}
The coboundary morphisms are still defined as in \eqref{delta},
with the additional rule that $x^i y^j$ is considered zero in $V_A$ as soon as $(i,j) \notin A$.
\end{remark}

\begin{remark} \label{quotientingrespectsgradingrmk}
 It is important to observe that the above complexes remain naturally bigraded, and that this is compatible with the bigrading described
 in Section~\ref{section_bigrading}. In other
 words, for any $(a,b) \in \ZZ^2$, also the spaces $K_{p,q}^{(a,b)}(X,L)$ and $K_{p,q}^{(a,b)}(X; K, L)$ can be computed from the above sequences.
\end{remark}

\subsection{Removing multiple points}
In some cases we can remove multiple points from $\Delta$ by applying Lemma~\ref{quotient} repeatedly.
In algebraic terms this works if and only if these points, when viewed as elements of $V_\Delta$, form a regular sequence
for the graded module $M$, where $M$ is either $\bigoplus_{i\geq 0}V_{i\Delta}$ or $\bigoplus_{i\geq 1}V_{(i\Delta)^{(1)}}$. The length of a regular sequence is bounded by the Krull dimension of $M$, which is equal to $3$. So we can never remove more than three points. It is well-known that for graded modules over Noetherian rings any permutation of a regular sequence is again a regular sequence, so the order of removing points does not matter. 
Concretely, after removing the points $P_1,\ldots,P_m$ we get the complex
\begin{multline*}
\ldots \longrightarrow\wedgepow{p+1} V_{\Delta\backslash\{P_1,\ldots,P_m\}}\otimes \frac{M_{q-1}}{P_1M_{q-2}+\ldots+P_mM_{q-2}} \\
 \longrightarrow\wedgepow{p} V_{\Delta\backslash\{P_1,\ldots,P_m\}}\otimes \frac{M_q}{P_1M_{q-1}+\ldots+P_mM_{q-1}}\longrightarrow\ldots
\end{multline*}
where $M_i$ denotes the degree $i$ part of $M$. Here, as before, we abuse notation and identify the points $P_i \in \Delta$ with the corresponding monomials in $V_\Delta$.
So for $M=\bigoplus_{i\geq 0}V_{i\Delta}$ this gives
\begin{multline*}
\ldots \longrightarrow\wedgepow{p+1} V_{\Delta\backslash\{P_1,\ldots,P_m\}}\otimes V_{(q-1)\Delta\backslash((P_1+(q-2)\Delta)\cup\ldots\cup(P_m+(q-2)\Delta))}\nonumber \\ \longrightarrow\wedgepow{p} V_{\Delta\backslash\{P_1,\ldots,P_m\}}\otimes V_{q\Delta\backslash((P_1+(q-1)\Delta)\cup\ldots\cup(P_m+(q-1)\Delta))}\longrightarrow\ldots
\end{multline*}
while for $M=\bigoplus_{i\geq 1}V_{(i\Delta)^{(1)}}$ it gives
\begin{multline}
\ldots \longrightarrow\wedgepow{p+1} V_{\Delta\backslash\{P_1,\ldots,P_m\}}\otimes V_{((q-1)\Delta)^{(1)}\backslash((P_1+((q-2)\Delta)^{(1)})\cup\ldots\cup(P_m+((q-2)\Delta)^{(1)}))}\nonumber \\ \longrightarrow\wedgepow{p} V_{\Delta\backslash\{P_1,\ldots,P_m\}}\otimes V_{q\Delta\backslash((P_1+((q-1)\Delta)^{(1)})\cup\ldots\cup(P_m+((q-1)\Delta)^{(1)}))}\longrightarrow\ldots
\end{multline}
The question we study in this section is which sequences of points $P_1, \dots, P_m \in \Delta \cap \ZZ^2$ are regular, where necessarily $m \leq 3$.

We first study the problem of which sequences of two points are regular. As for $M = \bigoplus_{i\geq 0}V_{i\Delta}$, if we first remove
a point $P \in \Delta \cap \ZZ^2$ then we end up with $M / PM$, whose graded components in degree $q \geq 1$ 
are of the form $V_{q\Delta\backslash(P+(q-1)\Delta)}$, while the degree 0 part is just $V_{0\Delta}$. Multiplication by another point $Q \in \Delta \cap \ZZ^2$ in $M/PM$ corresponds to
$$V_{q\Delta\backslash(P+(q-1)\Delta)}\overset{\cdot Q}{\longrightarrow}V_{(q+1)\Delta\backslash(P+q\Delta)}.$$
In order for the sequence $P,Q$ to be regular this map has to be injective for all $q \geq 1$. This means that
$$((q\Delta\backslash(P+(q-1)\Delta))+Q)\cap(P+q\Delta)\cap\ZZ^2=\emptyset.$$
Subtracting $P+Q$ yields
$$(q\Delta-P)\backslash((q-1)\Delta)\cap(q\Delta-Q)\cap\ZZ^2=\emptyset,$$
eventually leading to the criterion
\begin{multline} \label{xyregularcondition1}
\text{$P,Q$ is regular for $\bigoplus_{i\geq 0}V_{i\Delta}$} \quad \Leftrightarrow \\ \forall q\geq 1: (q\Delta-P)\cap(q\Delta-Q)\cap\ZZ^2\subseteq(q-1)\Delta.
\end{multline}
Similarly we find
\begin{multline} \label{xyregularcondition2}
\text{$P,Q$ is regular for $\bigoplus_{i\geq 1}V_{(i\Delta)^{(1)}}$} \quad \Leftrightarrow \\ \forall q\geq 1: (q\Delta-P)^{(1)}\cap(q\Delta-Q)^{(1)}\cap\ZZ^2\subseteq ((q-1)\Delta)^{(1)}.
\end{multline}
These criteria are strongly simplified by the equivalences $\emph{\ref{thm-remove-reg}}.\iff \emph{\ref{thm-remove-reg-int}}.
\iff \emph{\ref{thm-remove-quadrangle}}.$
of the following theorem:

\begin{theorem}\label{thm-remove}
Let $\Delta$ be a two-dimensional lattice polygon.
For two distinct lattice points $P, Q \in \Delta$, the following are equivalent:
\begin{enumerate}
\item\label{thm-remove-reg} $P,Q$ is a regular sequence for $\bigoplus_{i\geq 0}V_{i\Delta}$.
\item\label{thm-remove-reg-int} $P,Q$ is a regular sequence for $\bigoplus_{i\geq 1}V_{(i\Delta)^{(1)}}$.
\item\label{thm-remove-exists} $(q \Delta - P) \cap (q \Delta - Q) \subseteq (q-1) \Delta$ for some $q > 1$.
\item\label{thm-remove-forall} $(q \Delta - P) \cap (q \Delta - Q) \subseteq (q-1) \Delta$ for all $q \geq 1$.
\item\label{thm-remove-forall-int} $((q \Delta)\topinterior - P) \cap ((q \Delta)\topinterior - Q) \subseteq ((q-1) \Delta)\topinterior$ for all $q \geq 1$, where $\topinterior$ denotes the interior for the standard 
topology on $\RR^2$.
\item\label{thm-remove-lattice-int} $((q \Delta)\interior - P) \cap ((q \Delta)\interior - Q) \cap \mathbb{Z}^2 \subseteq ((q-1) \Delta)\interior \cap \mathbb{Z}^2$ for all $q \geq 1$.
\item\label{thm-remove-lattice} $(q \Delta - P) \cap (q \Delta - Q) \cap \mathbb{Z}^2 \subseteq (q-1) \Delta \cap \mathbb{Z}^2$ for all $q \geq 1$.
\item\label{thm-remove-halfplane} Let $\ell$ be the line through $P$ and $Q$.
For both half-planes $H$ bordered by $\ell$, the polygon $H \cap \Delta$
is a triangle with $P$ and $Q$ as two vertices
(this may be degenerate, in which case it is the line segment $P Q$).
\item\label{thm-remove-quadrangle} $\Delta$ is a quadrangle and $P$ and $Q$
are opposite vertices of this quadrangle (this may be the degenerate case where 
$\Delta$ is a triangle and $P, Q$ are any
pair of vertices of $\Delta$).
\end{enumerate}
\end{theorem}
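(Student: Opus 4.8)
The plan is to introduce the auxiliary condition
\[ (\star)\qquad \text{every facet of }\Delta\text{ contains }P\text{ or }Q, \]
and to show that every one of \ref{thm-remove-reg}--\ref{thm-remove-quadrangle} is equivalent to $(\star)$. Fix the irredundant facet presentation $\Delta=\{z\in\RR^2\mid\langle z,u_i\rangle\le a_i,\ i=1,\dots,n\}$ with the $u_i$ primitive integer vectors; since the vertices of $\Delta$ lie in $\ZZ^2$ the $a_i$ are integers, and so are $\langle P,u_i\rangle$ and $\langle Q,u_i\rangle$. From $q\Delta=(q-1)\Delta+\Delta$ one obtains the facet presentations $(q\Delta-P)\cap(q\Delta-Q)=\{z\mid\langle z,u_i\rangle\le qa_i-m_i\}$ and $(q-1)\Delta=\{z\mid\langle z,u_i\rangle\le(q-1)a_i\}$, where $m_i:=\max(\langle P,u_i\rangle,\langle Q,u_i\rangle)\le a_i$. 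In particular $(q\Delta-P)\cap(q\Delta-Q)\supseteq(q-1)\Delta$ for every $q$, and the slack at facet $i$ is the \emph{integer} $\varepsilon_i:=a_i-m_i\ge0$, which is zero exactly when $P$ or $Q$ lies on facet $i$. Finally, note that \ref{thm-remove-reg}$\Leftrightarrow$\ref{thm-remove-lattice} and \ref{thm-remove-reg-int}$\Leftrightarrow$\ref{thm-remove-lattice-int} are nothing but \eqref{xyregularcondition1} and \eqref{xyregularcondition2} (a subset of $\ZZ^2$ lies in $(q-1)\Delta$ iff it lies in $(q-1)\Delta\cap\ZZ^2$, and likewise for interiors), so these require no further argument.

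Two blocks are then routine. \textbf{Geometry.} If $(\star)$ holds then, as $P$ lies on at most two facets and $Q$ on at most two, $\Delta$ has at most four facets; a short case analysis of the three- and four-facet cases shows that $\Delta$ is a quadrangle with $P,Q$ as opposite vertices, or one of the degenerate triangle configurations listed in \ref{thm-remove-quadrangle}, and conversely these all satisfy $(\star)$. The equivalence with \ref{thm-remove-halfplane} is immediate: the segment $\ell\cap\Delta$ joins $P$ to $Q$, and cutting $\Delta$ along it gives the two triangles of \ref{thm-remove-halfplane}; conversely gluing two such triangles along $PQ$ recovers the quadrangle (the degenerate cases being checked the same way). \textbf{Forward polyhedral implications.} Under $(\star)$ all $\varepsilon_i=0$, so the facet presentations give $(q\Delta-P)\cap(q\Delta-Q)=(q-1)\Delta$ for all $q\ge1$, i.e.\ \ref{thm-remove-forall}; this yields \ref{thm-remove-exists} and \ref{thm-remove-lattice} at once, and passing to interiors (after a real translation putting $0\in\Delta\topinterior$, so that $0\in(q\Delta)\topinterior-P$ and $0\in(q\Delta)\topinterior-Q$ for $q\ge2$, whence $((q\Delta)\topinterior-P)\cap((q\Delta)\topinterior-Q)=\big((q\Delta-P)\cap(q\Delta-Q)\big)\topinterior$) gives \ref{thm-remove-forall-int}, and then \ref{thm-remove-lattice-int}, because the lattice points of $(q\Delta)\interior$ are exactly those of $(q\Delta)\topinterior$.

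The real content is the converse: if $(\star)$ fails, each of \ref{thm-remove-exists}, \ref{thm-remove-forall}, \ref{thm-remove-forall-int}, \ref{thm-remove-lattice}, \ref{thm-remove-lattice-int} must fail (then \ref{thm-remove-reg}, \ref{thm-remove-reg-int} fail via \eqref{xyregularcondition1}, \eqref{xyregularcondition2}, and \ref{thm-remove-halfplane}, \ref{thm-remove-quadrangle} fail by the geometry block). Suppose facet $j$ contains neither $P$ nor $Q$, so $\varepsilon_j\ge1$. Fix any $q\ge2$. Since facet $j$ is a genuine facet of the two-dimensional polytope $(q-1)\Delta$, deleting its inequality strictly enlarges the region, so some $z_0$ satisfies $\langle z_0,u_i\rangle\le(q-1)a_i$ for $i\ne j$ and $\langle z_0,u_j\rangle>(q-1)a_j$; sliding $z_0$ towards an interior point of $(q-1)\Delta$ produces a point $z$ with $(q-1)a_j<\langle z,u_j\rangle<(q-1)a_j+\varepsilon_j$ and $\langle z,u_i\rangle\le(q-1)a_i\le qa_i-m_i$ for $i\ne j$, so $z\in\big((q\Delta-P)\cap(q\Delta-Q)\big)\setminus(q-1)\Delta$, and a small perturbation puts it in the interior; this refutes \ref{thm-remove-exists}, \ref{thm-remove-forall} and \ref{thm-remove-forall-int}. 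For the lattice statements one needs an \emph{integral} witness: take $q$ large and consider the thin polygon $W_q=\{z\mid\langle z,u_i\rangle\le qa_i-m_i\ \forall i,\ \langle z,u_j\rangle>(q-1)a_j\}$, which runs alongside facet $j$ of $(q-1)\Delta$, has extent exactly $\varepsilon_j$ in the direction $u_j$, and extent growing linearly in $q$ along the edge direction of facet $j$. Because $\varepsilon_j\ge1$ and $u_j$ is primitive, the set of $u_j$-values attained on $W_q$ contains an integer $c$; the line $\{\langle z,u_j\rangle=c\}$ meets $\ZZ^2$, and for $q$ large its intersection with $W_q$ is long enough to contain a lattice point. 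This kills \ref{thm-remove-lattice}, and \ref{thm-remove-lattice-int} is handled identically, using $\varepsilon_j\ge1$ once more to leave room for the strict interior inequalities. Assembling the pieces, all nine conditions are equivalent.

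The step I expect to be the genuine obstacle is this last one, and within it the production of a \emph{lattice} witness for \ref{thm-remove-lattice} and \ref{thm-remove-lattice-int}: a priori the slab $W_q$ can be arbitrarily thin, and the reason it nonetheless captures a lattice point is precisely that $\varepsilon_j=a_j-\max(\langle P,u_j\rangle,\langle Q,u_j\rangle)$ is a strictly positive \emph{integer}, so that $W_q$ is wide enough in the primitive direction $u_j$ to contain a full lattice line parallel to facet $j$.
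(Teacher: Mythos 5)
Your proposal is correct in substance, but it is organized quite differently from the paper's proof. The paper runs a cycle (3)$\Rightarrow$(4)$\Rightarrow$(5)$\Rightarrow$(6)$\Rightarrow$(7)$\Rightarrow$(8)$\Rightarrow$(9)$\Rightarrow$(3), together with (1)$\Leftrightarrow$(7) and (2)$\Leftrightarrow$(6): the step (3)$\Rightarrow$(4) is a scaling argument passing from one $q$ to all $q'$, (6)$\Rightarrow$(7) translates by the lattice points of $(3\Delta)^{(1)}$, the contrapositive of (7)$\Rightarrow$(8) produces the clean lattice witness $W=qR$ for a vertex $R$ off the line $PQ$ via barycentric coordinates, and (9)$\Rightarrow$(3) is a cone argument at $q=2$. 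You instead reduce everything to the facet condition $(\star)$ through the inequality description $(q\Delta-P)\cap(q\Delta-Q)=\{z:\langle z,u_i\rangle\le qa_i-m_i\}$. This buys you the stronger conclusion that under the equivalent conditions one actually has equality with $(q-1)\Delta$ for all $q$, and it removes the need for the paper's scaling and translation tricks; the price is that your negative direction requires a lattice point in a thin slab along facet $j$, which your key observation that $\varepsilon_j=a_j-m_j$ is a positive integer does indeed supply (the chord of $(q\Delta-P)\cap(q\Delta-Q)$ at the integral level $(q-1)a_j+1$ contains $(q-1)$ times a segment in the relative interior of facet $j$, so it grows linearly in $q$ and eventually captures a lattice point on that lattice line), whereas the paper's witness $qR$ needs no such asymptotic estimate.

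One step does not go through verbatim: refuting (6) ``identically''. A lattice witness for the failure of (6) must satisfy the strict constraints $\langle z,u_i\rangle\le qa_i-m_i-1$ for all $i$, and if you also insist on $\langle z,u_j\rangle>(q-1)a_j$ as in your argument for (7), then at facet $j$ the admissible integer values are $(q-1)a_j+1,\dots,(q-1)a_j+\varepsilon_j-1$, a range that is empty when $\varepsilon_j=1$. The fix is one line: to have $z\notin((q-1)\Delta)^{(1)}$ it suffices that $z$ is not an \emph{interior} lattice point of $(q-1)\Delta$, so place the witness on the hyperplane $\langle z,u_j\rangle=(q-1)a_j$; then $\varepsilon_j\ge1$ is exactly the room needed for the strict upper constraint at facet $j$, and the long-chord argument applies unchanged. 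Finally, your geometry block (like the paper's own wording of (9)) glosses the degenerate configuration of a triangle with one of $P,Q$ in the relative interior of the edge opposite the other, which does satisfy $(\star)$ and (8); as in the paper's proof of (8)$\Rightarrow$(9), condition (9) should be read as ``$\Delta$ is the union of two (possibly degenerate) triangles on the base $PQ$''.
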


\begin{proof}
The equivalences $\emph{\ref{thm-remove-reg}}.\iff\emph{\ref{thm-remove-lattice}}.$ and $\emph{\ref{thm-remove-reg-int}}.\iff\emph{\ref{thm-remove-lattice-int}}.$ follow from the foregoing discussion.\newline
$\emph{\ref{thm-remove-exists}}. \implies \emph{\ref{thm-remove-forall}}.$:
assume that \emph{\ref{thm-remove-exists}}.\ holds for some $q > 1$.
Let $q' \geq 1$, we show that it also holds for $q'$.
Let $W \in (q' \Delta - P) \cap (q' \Delta - Q)$,
we need to show that $W \in (q' - 1) \Delta$.

In case $q' > q$, we define $\delta = (q - 1)/(q' - 1) < 1$.
Now consider
\begin{align*}
    W &\in ((q'-1) \Delta + (\Delta - P)) \cap ((q'-1) \Delta + (\Delta - Q)) \\
    \delta W &\in ((q-1) \Delta + \delta (\Delta - P)) \cap ((q-1) \Delta + \delta (\Delta - Q)) \\
        &\subseteq ((q-1) \Delta + (\Delta - P)) \cap ((q-1) \Delta + (\Delta - Q)) \\
        &= (q \Delta - P) \cap (q \Delta - Q) \subseteq (q - 1) \Delta.
\end{align*}
We conclude that $W \in (q' - 1) \Delta$.

If $q' < q$, we find
\begin{align*}
    W + (q - q')\Delta &\subseteq \left[(q' \Delta - P) \cap (q' \Delta - Q)\right] + (q - q') \Delta \\
        &\subseteq (q' \Delta - P + (q - q') \Delta) \cap (q' \Delta - Q + (q - q') \Delta) \\
        &\subseteq (q \Delta - P) \cap (q \Delta - Q) \subseteq (q-1) \Delta.
\end{align*}
Since $W + (q - q') \Delta \subseteq (q-1) \Delta$, it follows that $W \in (q' - 1) \Delta$.

$\emph{\ref{thm-remove-forall}}. \implies \emph{\ref{thm-remove-forall-int}}.$:
this holds by taking interiors on both sides and using the fact that
$(A \cap B)\topinterior = A\topinterior \cap B\topinterior$.

$\emph{\ref{thm-remove-forall-int}}. \implies \emph{\ref{thm-remove-lattice-int}}.$:
intersect with $\mathbb{Z}^2$ on both sides and use $\Delta\topinterior \cap \mathbb{Z}^2 = \Delta\interior \cap \ZZ^2$.

$\emph{\ref{thm-remove-lattice-int}}. \implies \emph{\ref{thm-remove-lattice}}.$:
let $W \in (q \Delta - P) \cap (q \Delta - Q) \cap \mathbb{Z}^2$.
\begin{align*}
    W + \left((3\Delta)\interior \cap \mathbb{Z}^2\right)
        &= \left(W + (3\Delta)\interior\right) \cap \mathbb{Z}^2 \\
        &\subseteq \left[q \Delta + (3 \Delta)\interior - P\right] \cap \left[q \Delta + (3 \Delta)\interior - Q\right] \cap \mathbb{Z}^2 \\
        &\subseteq \left[((q+3) \Delta)\interior - P\right] \cap \left[((q+3) \Delta)\interior - Q\right] \cap \mathbb{Z}^2 \\
        &\subseteq ((q+2) \Delta)\interior \cap \mathbb{Z}^2.
\end{align*}
Since $(3\Delta)\interior$ must contain a lattice point,
it follows that $W \in (q-1) \Delta \cap \mathbb{Z}^2$.

$\emph{\ref{thm-remove-lattice}}. \implies \emph{\ref{thm-remove-halfplane}}.$:
we show this by contraposition, so we assume that item \emph{\ref{thm-remove-halfplane}}.\ is not satisfied
for a half-plane $H$. 

\begin{figure}[thb]
\begin{minipage}{0.45\textwidth}
\begin{tikzpicture}[scale=0.4]
\coordinate [label=below:${P=O}$] (P) at (0,0);
\coordinate [label=below:$Q$] (Q) at (10,0);
\coordinate [label=above:$T$] (T) at (6,6);
\coordinate [label=above:$R$] (R) at (2,4);
\coordinate (s) at (1,3);
\coordinate [label=above:$\ell$] (ell) at (12,0);

\filldraw[fill=black, fill opacity=0.1] (P) -- (Q) -- (T) -- (R) -- (s) -- (P);
\node at (5, 2) {$H \cap \Delta$};

\draw[help lines] (-2,0) -- (ell);
\draw[dashed] (-2,6) -- (12,6);
\draw[dotted] (P) -- (T);

\fill (P) circle(5pt);
\fill (Q) circle(5pt);
\fill (T) circle(5pt);
\fill (R) circle(5pt);
\end{tikzpicture}
\caption{$\ref{thm-remove-lattice}. \implies \ref{thm-remove-halfplane}.$}
\end{minipage}
\hfill
\begin{minipage}{0.45\textwidth}
\begin{tikzpicture}[scale=0.4]
\coordinate [label=below:${P=O}$] (P) at (0,0);
\coordinate [label=below:$Q$] (Q) at (5,0);
\coordinate [label=below:$T$] (T) at (8,0);
\coordinate [label=below:$R$] (R) at (-3,0);
\coordinate [label=above:$\ell$] (ell) at (10,0);

\draw (R) -- (T);

\draw[help lines] (-4,0) -- (ell);

\fill (P) circle(5pt);
\fill (Q) circle(5pt);
\fill (T) circle(5pt);
\fill (R) circle(5pt);
\end{tikzpicture}
\caption{degenerate case (where $T$ may be equal to $Q$)}
\end{minipage}
\end{figure}

Let $T$ a vertex of $H \cap \Delta$ at maximal distance from $\ell$, and assume for now that this
distance is positive. Let $R$ be a vertex of $H \cap \Delta$, distinct from $P$, $Q$ and $T$
(the fact that such an $R$ exists follows from the assumption).
Without loss of generality,
we may assume that $R$ lies in the half-plane bordered by the line $P T$ that does not contain $Q$.
Choose coordinates such that the origin is $P$.

Equip $R$ with barycentric coordinates
\begin{equation}\label{thm-remove-r-bary}
    R = \alpha T + \beta Q + \gamma P = \alpha T + \beta Q.
\end{equation}
Because of the position of $R$, we know that $0 \leq \alpha \leq 1$ and $\beta < 0$.

Choose an integer $q > \max\{1, -\beta^{-1}\}$.
Let $W = q R$.
We claim that
\begin{equation}
    W \in \left( (q \Delta) \cap (q \Delta - Q) \cap \mathbb{Z}^2 \right) \setminus (q-1) \Delta,
\end{equation}
contradicting \emph{\ref{thm-remove-lattice}}.
Since $R$ is a vertex of $H \cap \Delta$, we immediately have
$W \in \left((q \Delta) \cap \mathbb{Z}^2 \right) \setminus (q-1) \Delta$.
It remains to show that $W \in q \Delta - Q$.
Using \eqref{thm-remove-r-bary}, we have
\begin{align*}
    W + Q &= q R + Q = q R + \beta^{-1}(R - \alpha T) \\
        &= (q + \beta^{-1})R + (- \beta^{-1} \alpha) T
\end{align*}
This is a convex combination of $qP = O$, $q R$ and $q T$ because
\[
    q + \beta^{-1} \geq 0, \qquad - \beta^{-1} \alpha \geq 0,
\]
and
\[
    (q + \beta^{-1}) + ( - \beta^{-1} \alpha) = q + \beta^{-1}(1 - \alpha) \leq q.
\]
It follows that $W + Q \in q\Delta$.

In the degenerate case where $T \in \ell$,
without loss of generality one can assume that there is a vertex $R$ such that $R$ and $Q$ lie on opposite sides of $P = O$. One proceeds
as above with $\alpha = 0$ and $\beta < 0$.

$\emph{\ref{thm-remove-halfplane}}. \implies \emph{\ref{thm-remove-quadrangle}}.$:
this follows immediately from the geometry:
$\Delta$ must be the union of two triangles on the base $P Q$.

\begin{figure}
\begin{tikzpicture}[scale=0.25]
\coordinate [label=above:$R$] (R) at (6,4);
\coordinate [label=below:$S$] (S) at (5,-3);
\coordinate [label=left:$P$] (P1) at (0,0);
\coordinate [label=right:$2Q-P$] (Q1) at (20,0);
\coordinate [label=above:$2R-P$] (R1) at (12,8);
\coordinate [label=below:$2S-P$] (S1) at (10,-6);
\coordinate [label=left:$2P-Q$] (P2) at (-10,0);
\coordinate [label=right:$Q$] (Q2) at (10,0);
\coordinate [label=above:$2R-Q$] (R2) at (2,8);
\coordinate [label=below:$2S-Q$] (S2) at (0,-6);
\coordinate (W1) at (15,10);
\coordinate (Z1) at (12.5,-7.5);
\coordinate (W2) at (0,10);
\coordinate (Z2) at (-2.5,-7.5);

\draw[help lines] (W1) -- (P1) -- (Z1);
\draw[help lines] (W2) -- (Q2) -- (Z2);
\filldraw[fill=black, fill opacity=0.1] (P1) -- (R1) -- (Q1) -- (S1) --(P1);
\filldraw[fill=black, fill opacity=0.1] (P2) -- (R2) -- (Q2) -- (S2) --(P2);
\node at (5, 0) {$\Delta$};
\node at (0, 4) {$2 \Delta - Q$};
\node at (13, 4) {$2 \Delta - P$};

\fill (R) circle(8pt);
\fill (S) circle(8pt);
\fill (P1) circle(8pt);
\fill (Q1) circle(8pt);
\fill (R1) circle(8pt);
\fill (S1) circle(8pt);
\fill (P2) circle(8pt);
\fill (Q2) circle(8pt);
\fill (R2) circle(8pt);
\fill (S2) circle(8pt);
\end{tikzpicture}
\caption{$\emph{\ref{thm-remove-quadrangle}}. \implies \emph{\ref{thm-remove-exists}}.$ with $q=2$}
\end{figure}

$\emph{\ref{thm-remove-quadrangle}}. \implies \emph{\ref{thm-remove-exists}}.$:
we show this for $q = 2$.
By assumption, the lattice polygon $\Delta$ is a convex quadrangle $P R Q S$
(possibly degenerated into a triangle, i.e.\ one of $R$ or $S$ may coincide with $P$ or $Q$).
We need to show that
\begin{equation}
    (2 \Delta - P) \cap (2 \Delta - Q) \subseteq \Delta
\end{equation}
The left hand side is clearly contained in the cones
$\widehat{R P S}$ and $\widehat{R Q S}$, whose intersection is precisely 
our quadrangle $P R Q S = \Delta$.
\end{proof}

Now let us switch to regular sequences consisting of three points. We have the following easy fact:
\begin{lemma} Let $P,Q,R \in \Delta \cap \ZZ^2$ be distinct. Then
$P,Q,R$ is a regular sequence for $M=\bigoplus_{i\geq 0}V_{i\Delta}$ (resp.\ $M=\bigoplus_{i\geq 1}V_{(i\Delta)^{(1)}}$) if 
and only if 
\[ P,Q, \qquad Q,R, \qquad P,R \] 
are regular sequences.
\end{lemma}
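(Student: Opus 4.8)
The plan is to dispatch the forward implication immediately and to reduce the converse, via Theorem~\ref{thm-remove}, to a single statement about lattice triangles. For the forward implication: if $P,Q,R$ is a regular sequence for $M$, then so is its initial segment $P,Q$; and since $M$ is a finitely generated graded module over a Noetherian ring, any permutation of a regular sequence is again regular, so applying this to $Q,R,P$ and to $P,R,Q$ shows that $Q,R$ and $P,R$ are regular for $M$ as well.

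For the converse I would first use the equivalence of items \ref{thm-remove-reg} and \ref{thm-remove-quadrangle} of Theorem~\ref{thm-remove}: a pair of lattice points of $\Delta$ is regular exactly when $\Delta$ is a quadrangle with the two points as opposite vertices (allowing the degenerate case where $\Delta$ is a triangle and the two points are any two of its vertices). Since $\{P,Q\}$ and $\{Q,R\}$ share the point $Q$, they cannot both be pairs of opposite vertices of one genuine quadrangle; hence $\Delta$ is not a genuine quadrangle, so by item \ref{thm-remove-quadrangle} it is a triangle, of which $P$, $Q$ and $R$ are vertices. A triangle has only three vertices and $P,Q,R$ are distinct, so $\Delta=\conv\{P,Q,R\}$. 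Thus it suffices to prove: \emph{if $\Delta$ is a lattice triangle with vertex set $\{P,Q,R\}$, then $P,Q,R$ is a regular sequence for $M$}. The hypothesis already gives that $P,Q$ is regular for $M$, and $(P,Q,R)M\neq M$ (its degree-$0$ part is $k$), so it remains only to show that multiplication by $R$ is injective on $N:=M/(PM+QM)$.

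This injectivity is the crux, and I would establish it by tracking monomial supports. For $M=\bigoplus_{i\ge 0}V_{i\Delta}$ one identifies $N_i\cong V_{A_i}$ with $A_i=(i\Delta\cap\ZZ^2)\setminus\bigl((P+(i-1)\Delta)\cup(Q+(i-1)\Delta)\bigr)$, multiplication by $R$ acting as $x^w\mapsto x^{w+R}$ (interpreted as $0$ whenever $w+R\notin A_{i+1}$); so the map is injective precisely when $A_i+R\subseteq A_{i+1}$ for every $i$. Fix $w\in A_i$. Then $w+R\in(i+1)\Delta$ is immediate, and if moreover $w+R\in P+i\Delta$, then --- using that $P,Q,R$ are affinely independent --- we may write $w+R-P=\alpha P+\beta Q+\gamma R$ with $\alpha+\beta+\gamma=i$ and $\alpha,\beta,\gamma\ge 0$, so that $w=(\alpha+1)P+\beta Q+(\gamma-1)R$. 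By uniqueness of barycentric coordinates applied to $w\in i\Delta$ we get $\gamma-1\ge 0$, hence $w-P=\alpha P+\beta Q+(\gamma-1)R\in(i-1)\Delta$ with $w-P\in\ZZ^2$, contradicting $w\in A_i$; the symmetric argument excludes $w+R\in Q+i\Delta$, so $w+R\in A_{i+1}$.

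For $M=\bigoplus_{i\ge 1}V_{(i\Delta)^{(1)}}$ the identical computation applies after replacing each polytope by its topological interior, using $(i\Delta)\topinterior+\Delta\subseteq((i+1)\Delta)\topinterior$ together with $(i\Delta)\topinterior\cap\ZZ^2=(i\Delta)\interior\cap\ZZ^2$, the inequalities ``$\ge 0$'' now becoming ``$>0$''. The only bookkeeping subtlety in either case is the behaviour at small $i$, where $(i-1)\Delta$ collapses to a single point (respectively $((i-1)\Delta)^{(1)}$ to $\emptyset$); there the same barycentric comparison still goes through, and in fact more easily. Once $R$ is shown to be a nonzerodivisor on $N$, the sequence $P,Q,R$ is regular for $M$, which completes the proof.
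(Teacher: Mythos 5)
Your proof is correct, but it takes a genuinely different route from the paper's. The paper never determines the shape of $\Delta$: after noting (as you do) that the `only if' part follows from passing to initial segments and permuting, it observes that, $P,Q$ being regular, the only thing to check is injectivity of multiplication by $R$ on $M/(PM+QM)$, translates this into the lattice-set inclusion \eqref{regularsequenceoflength3}, and obtains that inclusion as the union of the two inclusions supplied by regularity of $P,R$ and of $Q,R$ via the criteria \eqref{xyregularcondition1}--\eqref{xyregularcondition2}; in particular it does not invoke Theorem~\ref{thm-remove} at all. You instead first use the implication \emph{\ref{thm-remove-reg}}$\implies$\emph{\ref{thm-remove-quadrangle}} of Theorem~\ref{thm-remove} to pin down that $\Delta$ is the triangle with vertex set $\{P,Q,R\}$ (correctly -- two regular pairs sharing $Q$ cannot both be opposite-vertex pairs of a genuine quadrangle, and in fact you only use two of the three pairwise hypotheses here), and then verify regularity of the vertex sequence of a lattice triangle directly by barycentric coordinates, with strict inequalities in the twisted case. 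That verification is sound, including the identification $N_i\cong V_{A_i}$ and the reduction of injectivity to $A_i+R\subseteq A_{i+1}$, and it effectively gives a self-contained proof of the implication \emph{\ref{cor-remove-triangle}}$\implies$\emph{\ref{cor-remove-reg}},\emph{\ref{cor-remove-reg-int}} of Corollary~\ref{cor-remove-three}; the price is that your reduction leans on the nontrivial direction of Theorem~\ref{thm-remove}, which the paper's purely formal argument avoids. One cosmetic point: your justification of $(P,Q,R)M\neq M$ (``its degree-$0$ part is $k$'') applies verbatim only to $\bigoplus_{i\geq 0}V_{i\Delta}$; for the twisted module one should instead say that $M$ is a nonzero graded module bounded below while $P,Q,R$ have positive degree, which is equally immediate.
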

\begin{proof} It is clearly sufficient to prove the `if' part of the claim. Assume for simplicity that $M = \bigoplus_{i\geq 0}V_{i\Delta}$, the other case is similar.
Since $P,Q$ is regular, all we have to check is that
$$V_{q\Delta\backslash((P+(q-1)\Delta)\cup(Q+(q-1)\Delta))}\overset{ \cdot R}{\longrightarrow} V_{(q+1)\Delta\backslash((P+q\Delta)\cup(Q+q\Delta))}$$
is injective, or equivalently that
$$\left( q\Delta\backslash((P+(q-1)\Delta)\cup(Q+(q-1)\Delta))+R \right) \cap((P+q\Delta)\cup(Q+q\Delta))=\emptyset.$$
This condition can be rewritten as
\begin{equation} \label{regularsequenceoflength3}
q\Delta\cap((q\Delta+P-R)\cup(q\Delta+Q-R))\subseteq(P+(q-1)\Delta)\cup(Q+(q-1)\Delta).
\end{equation}
Since $P,R$ is regular we know that $q\Delta\cap(q\Delta + P -R)\subseteq P + (q-1)\Delta$ by \eqref{xyregularcondition1}.
Similarly because $Q,R$ is regular we have $q\Delta\cap(q\Delta + Q -R)\subseteq Q + (q-1)\Delta$. Together 
these two inclusions imply \eqref{regularsequenceoflength3}.
\end{proof}

As an immediate corollary, we deduce using Theorem~\ref{thm-remove}:

\begin{corollary}\label{cor-remove-three}
Let $\Delta$ be a two-dimensional lattice polygon.
For three distinct lattice points $P, Q, R \in \Delta$, the following statements are equivalent:
\begin{enumerate}
\item\label{cor-remove-reg} $P,Q,R$ is a regular sequence for $\bigoplus_{i\geq 0}V_{i\Delta}$.
\item\label{cor-remove-reg-int} $P,Q,R$ is a regular sequence for $\bigoplus_{i\geq 1}V_{(i\Delta)^{(1)}}$.
\item\label{cor-remove-triangle} $\Delta$ is a triangle with vertices $P$, $Q$ and $R$.
\end{enumerate}
\end{corollary}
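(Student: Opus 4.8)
The plan is to bootstrap from the two-point case. First I would apply the lemma immediately preceding the corollary: for $M$ equal to either $\bigoplus_{i\ge 0} V_{i\Delta}$ or $\bigoplus_{i\ge 1} V_{(i\Delta)^{(1)}}$, the triple $P,Q,R$ is a regular sequence for $M$ if and only if each of the three pairs $\{P,Q\}$, $\{Q,R\}$, $\{P,R\}$ is a regular sequence for $M$. Feeding the equivalence \ref{thm-remove-reg}.\,$\Leftrightarrow$\,\ref{thm-remove-reg-int}. of Theorem~\ref{thm-remove} into each of these three pairs shows at once that statements \ref{cor-remove-reg}. and \ref{cor-remove-reg-int}. are equivalent, since each of them is just the conjunction of the three (module-independent) pair conditions of Theorem~\ref{thm-remove}. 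So the whole corollary reduces to proving \ref{cor-remove-reg}.\,$\Leftrightarrow$\,\ref{cor-remove-triangle}., and for this I may freely use the combinatorial characterization \ref{thm-remove-quadrangle}. (equivalently \ref{thm-remove-halfplane}.) of when a pair is regular.

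For \ref{cor-remove-triangle}.\,$\Rightarrow$\,\ref{cor-remove-reg}.: if $\Delta$ is a triangle with vertices $P,Q,R$, then any two of these points form a pair of vertices of $\Delta$, which is precisely the degenerate case of item \ref{thm-remove-quadrangle}. of Theorem~\ref{thm-remove} — equivalently, the line through two vertices of a nondegenerate triangle leaves the whole triangle on one side and only the segment joining the two vertices on the other, so item \ref{thm-remove-halfplane}. holds. Hence all three pairs are regular, and by the preceding lemma so is the triple, in either module.

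For the converse, assume $P,Q,R$ is a regular sequence, so by the lemma all three pairs are regular. Applying \ref{thm-remove-quadrangle}. to $\{P,Q\}$ puts us in one of two cases: either $\Delta$ is a nondegenerate convex quadrangle with $P$ and $Q$ as opposite vertices, or $\Delta$ is a triangle with $P$ and $Q$ among its vertices. The first case cannot occur: in a nondegenerate quadrangle $Q$ has a unique opposite vertex, which is $P$, but regularity of $\{Q,R\}$ together with \ref{thm-remove-quadrangle}. would force $R$ to be that opposite vertex, giving $R=P$, contradicting distinctness. Hence $\Delta$ is a triangle; then regularity of $\{P,Q\}$ and of $\{Q,R\}$ forces each of $P,Q,R$ to be a vertex of $\Delta$, and since a triangle has exactly three vertices and $P,Q,R$ are distinct, they are precisely its vertices. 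This yields \ref{cor-remove-triangle}. and completes the proof.

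There is no genuinely hard step here: all of the geometry has already been packaged into Theorem~\ref{thm-remove}. The only place to be a little careful is the bookkeeping over the degenerate alternatives in \ref{thm-remove-quadrangle}. (a ``quadrangle'' collapsed to a triangle, or the ``triangle'' $H\cap\Delta$ collapsed to the segment $PQ$) together with the short pigeonhole argument that simultaneously excludes the honest-quadrangle case and identifies $\{P,Q,R\}$ with the vertex set of $\Delta$ — but both of these are routine.
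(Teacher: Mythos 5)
Your proposal is correct and follows essentially the same route the paper intends: it combines the lemma reducing a regular triple to its three regular pairs with the combinatorial characterization (item \emph{\ref{thm-remove-quadrangle}}.) of Theorem~\ref{thm-remove}, which is exactly how the paper deduces the corollary (stated there as immediate). Your extra bookkeeping ruling out the honest-quadrangle case and identifying $P,Q,R$ with the vertices is just the detail the paper leaves to the reader.
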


\subsection{Example: the case of Veronese embeddings}
Let us apply the foregoing to $\Delta=d\Sigma$ for $d \geq 2$, whose corresponding toric surface
is the Veronese surface $\nu_d(\PP^2)$ with coordinate ring
\begin{equation} \label{gradedcoordinateringdsigma} 
 S_{d\Sigma} \cong k \oplus V_{d\Sigma} \oplus V_{2d\Sigma} \oplus V_{3d\Sigma} \oplus V_{4d\Sigma} \oplus V_{5d\Sigma} \oplus \ldots 
\end{equation}
By the foregoing corollary the sequence of points $(0,d)$, $(d,0)$, $(0,0)$ is regular for $S_{d\Sigma}$.
When one removes these points along the above guidelines, the resulting graded module is
\[ k \oplus V_{d\Sigma \setminus \{(0,d),(d,0),(0,0)\}} \oplus V_{\conv\{(d-1,d-1),(2,d-1),(d-1,2)\}} \oplus 0 \oplus 0 \oplus 0 \oplus \ldots \]
 which can be rewritten as
\begin{equation} \label{gradedcoordinateringdsigmaremoved}
k \oplus V_{d\Sigma \setminus \{(0,d),(d,0),(0,0)\}} \oplus V_{(d,d) - (d\Sigma)^{(1)}} \oplus 0 \oplus 0 \oplus 0 \oplus \ldots
\end{equation}
We recall from the end of Section~\ref{section_quotient} that multiplication is defined by lattice addition, with the convention that the product is zero whenever the sum falls outside the indicated range. In order to find the graded Betti table of $\nu_d(\PP^2)$, it therefore suffices to compute the cohomology of complexes of the following type:
\begin{multline} \label{veronesecomplexdualremoved}
\wedgepow{\ell+1} V_{d\Sigma \setminus \{(0,d),(d,0),(0,0)\}}\longrightarrow\wedgepow{\ell} V_{d \Sigma \setminus \{(0,d),(d,0),(0,0)\}}\otimes V_{d \Sigma \setminus\{(0,d),(d,0),(0,0)\}}\\\longrightarrow\wedgepow{\ell-1} V_{d \Sigma \setminus \{(0,d),(d,0),(0,0)\}}\otimes V_{(d,d)-(d \Sigma)^{(1)}}
\end{multline}
Indeed, the cohomology in the middle has dimension $\dim K_{\ell,1}(X,L) = b_\ell$ and the cokernel of the second morphism has 
dimension $\dim K_{\ell-1,2}(X,L) = c_{N_\Delta - 1 - \ell}$. 

We can carry out the same procedure in the twisted case. The resulting graded module is 
\[ k \oplus V_{(d\Sigma)^{(1)}} \oplus V_{(d,d) - d\Sigma \setminus \{(0,d),(d,0),(0,0)\} } \oplus V_{\{(d,d)\}} \oplus 0 \oplus 0 \oplus \ldots \]
For instance, one finds that $K_{\ell,1}^\vee(X,L) \cong K_{N_\Delta-3-\ell,2}(X;K,L)$ is the cohomology in the middle of
\begin{multline*}
\wedgepow{N_\Delta-\ell-2} V_{(d\Sigma) \setminus \{(0,d),(d,0),(0,0)\}}\otimes V_{(d \Sigma)^{(1)}}\longrightarrow \\ \wedgepow{N_\Delta-\ell-3} V_{d \Sigma \backslash\{(0,d),(d,0),(0,0)\}}\otimes V_{(d,d)-d \Sigma \backslash\{(0,d),(d,0),(0,0)\}} \\ \longrightarrow\wedgepow{N_\Delta-\ell-4} V_{d \Sigma \backslash\{(0,d),(d,0),(0,0)\}}\otimes V_{(d,d)}.
\end{multline*}
As a side remark, note that this complex is isomorphic to the dual of (\ref{veronesecomplexdualremoved}). Thus this gives a combinatorial proof of the duality 
formula $K_{\ell,1}^\vee(X,L) \cong K_{N_\Delta-3-\ell,2}(X;K,L)$ for Veronese surfaces.


Let us conclude with a visualization of the point removal procedure in the case where $d = 3$ (in the non-twisted setting). Figure~\ref{figure_3Sigma} shows how the coordinate ring 
gradually shrinks upon removal of
$(0,3)$, then of $(3,0)$, and finally of $(0,0)$. The left column shows the graded parts of the original coordinate ring \eqref{gradedcoordinateringdsigma} in degrees $0,1,2,3$, 
while the right column does the same for the eventual graded module described in
\eqref{gradedcoordinateringdsigmaremoved}.
\begin{figure}[thb]
\centering
\begin{minipage}{.23\textwidth}
  \centering
  \includegraphics[width=.18\linewidth]{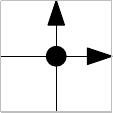}
\end{minipage}
\begin{minipage}{.23\textwidth}
  \centering
  \includegraphics[width=.18\linewidth]{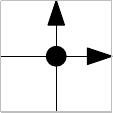}
\end{minipage}
\begin{minipage}{.23\textwidth}
  \centering
  \includegraphics[width=.18\linewidth]{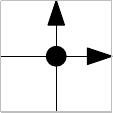}
\end{minipage}
\begin{minipage}{.23\textwidth}
  \centering
  \includegraphics[width=.18\linewidth]{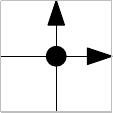}
\end{minipage}
 \\ \vspace{3mm}
\begin{minipage}{.23\textwidth}
  \centering
  \includegraphics[width=.45\linewidth]{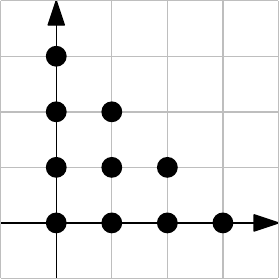}
\end{minipage}
\begin{minipage}{.23\textwidth}
  \centering
  \includegraphics[width=.45\linewidth]{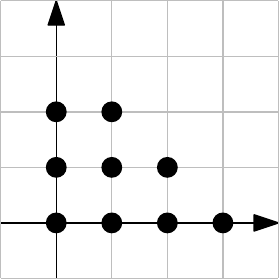}
\end{minipage}
\begin{minipage}{.23\textwidth}
  \centering
  \includegraphics[width=.45\linewidth]{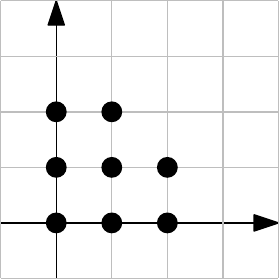}
\end{minipage}
\begin{minipage}{.23\textwidth}
  \centering
  \includegraphics[width=.45\linewidth]{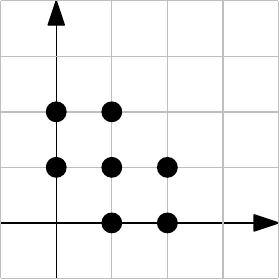}
\end{minipage} \\ \vspace{3mm}
\begin{minipage}{.23\textwidth}
  \centering
  \includegraphics[width=.72\linewidth]{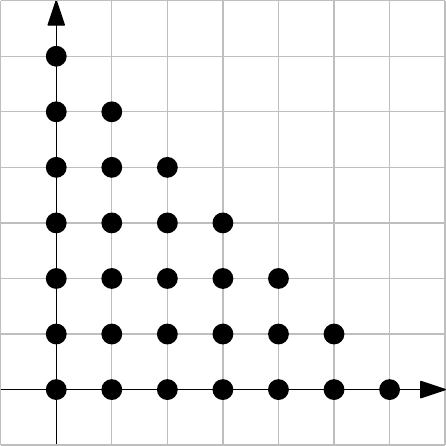}
\end{minipage}
\begin{minipage}{.23\textwidth}
  \centering
  \includegraphics[width=.72\linewidth]{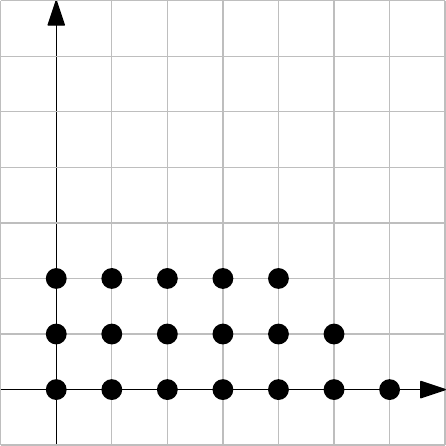}
\end{minipage}
\begin{minipage}{.23\textwidth}
  \centering
  \includegraphics[width=.72\linewidth]{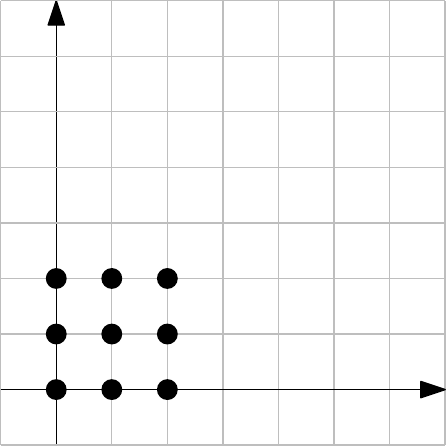}
\end{minipage} 
\begin{minipage}{.23\textwidth}
  \centering
  \includegraphics[width=.72\linewidth]{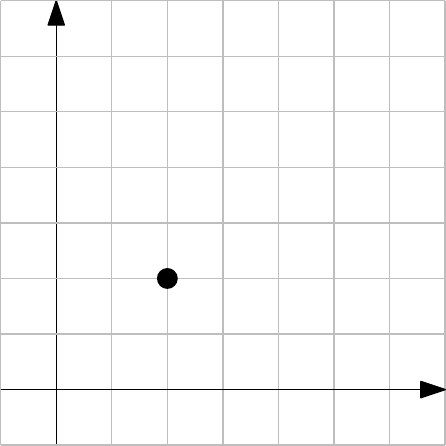}
\end{minipage} \\ \vspace{3mm}
\begin{minipage}{.23\textwidth}
  \centering
  \includegraphics[width=.99\linewidth]{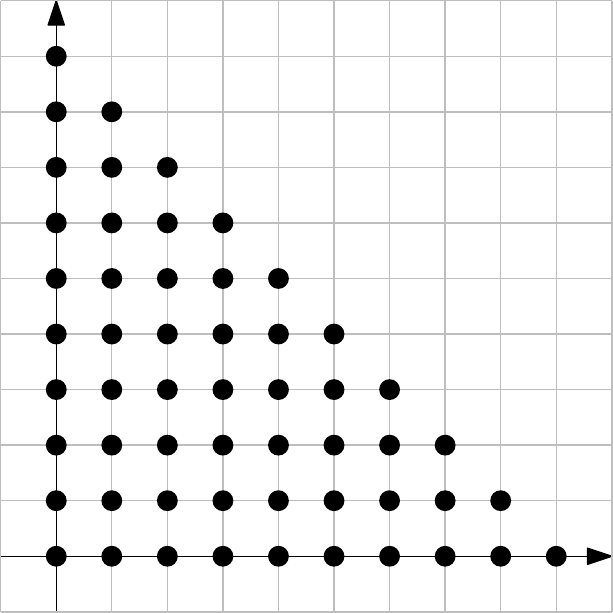}
\end{minipage}
\begin{minipage}{.23\textwidth}
  \centering
  \includegraphics[width=.99\linewidth]{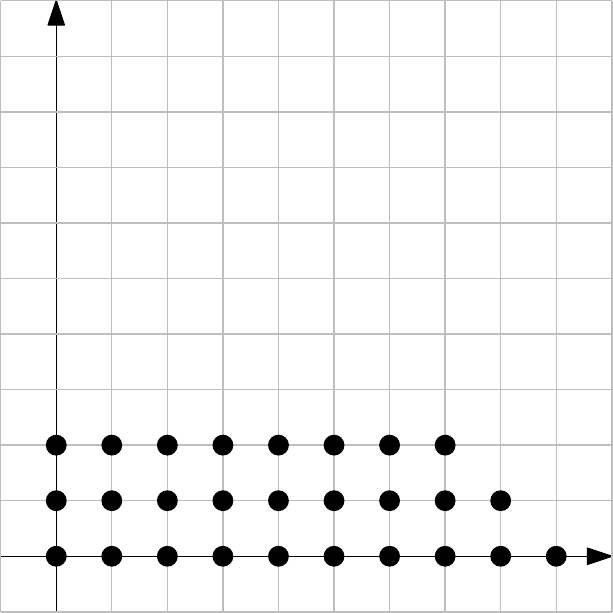}
\end{minipage}
\begin{minipage}{.23\textwidth}
  \centering
  \includegraphics[width=.99\linewidth]{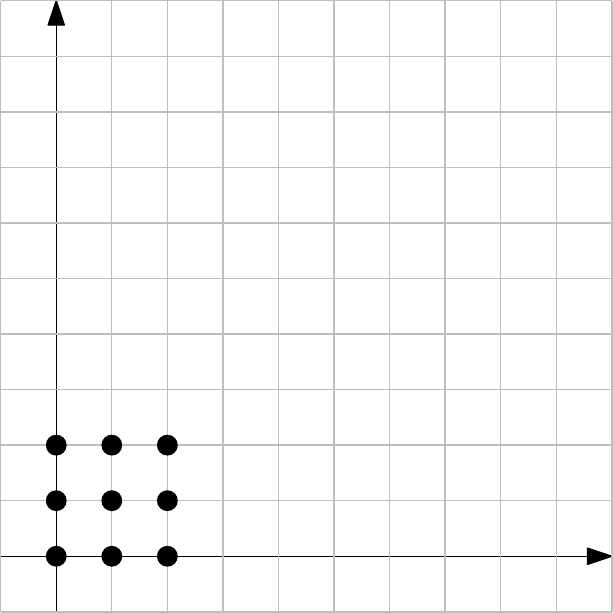}
\end{minipage}
\begin{minipage}{.23\textwidth}
  \centering
  \includegraphics[width=.99\linewidth]{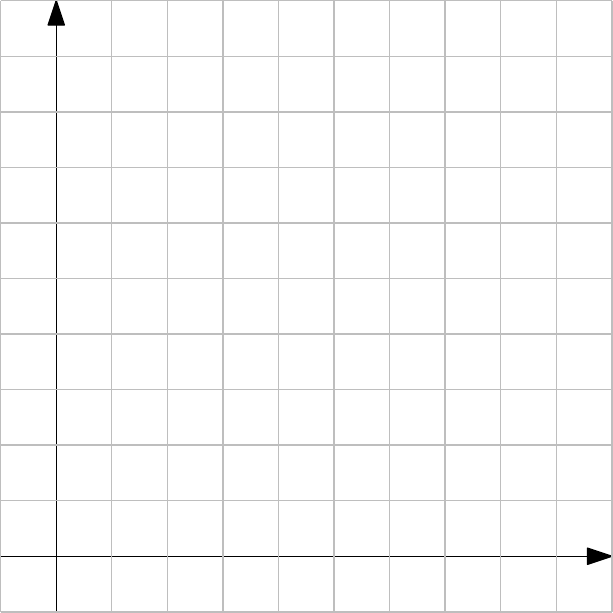}
\end{minipage}
\caption{Removing three points for $\Delta=3\Sigma$}
\label{figure_3Sigma} 
\end{figure}

\section{Computing graded Betti numbers} \label{section_computing}

\subsection{The algorithm}

To compute the entries $b_\ell$ and $c_\ell$
of the graded Betti table \eqref{toricbetti} of $X_\Delta \subseteq \PP^{N_\Delta - 1}$
we use the formulas \eqref{cohombell} and 
\eqref{cohomcelldual}.
In other words, we determine the $b_\ell$'s as  
\[
    \dim \ker \left(     \wedgepow{\ell} V_\Delta \otimes V_{\Delta} 
    \rightarrow
    \wedgepow{\ell-1} V_\Delta \otimes V_{2\Delta} \right) - 
    \dim  \wedgepow{\ell+1} V_\Delta
,\]
while the $c_\ell$'s are computed as
\[
    \dim \ker \left( \wedgepow{\ell-1} V_\Delta \otimes V_{\Delta^{(1)}} 
    \rightarrow
    \wedgepow{\ell-2} V_\Delta \otimes V_{(2\Delta)^{(1)}} \right)
.\]
Essentially, this requires writing down a matrix of the respective linear map and computing its rank.
As explained in Section~\ref{section_bigrading} we can consider these expressions
for each bidegree $(a,b)$ independently, and then just sum the contributions $c_{\ell,(a,b)}^\vee$ resp.\ $b_{\ell,(a,b)}$.
This greatly reduces the dimensions of the vector spaces and hence of the matrices that we need to deal with. 

\begin{remark}
The subtracted term in the formula for $b_\ell$ can be made explicit:
\[ \dim  \wedgepow{\ell+1} V_\Delta = { N_\Delta \choose \ell + 1}. \]
However we prefer to compute its contribution in each bidegree separately
(which is easily done, see Section~\ref{computingdimensions}),
the reason being that the $b_{\ell, (a,b)}$'s are interesting in their own right;
see also Remark~\ref{remark_each_bidegree} below.
\end{remark}

\subsubsection*{Speed-ups}

Lemma~\ref{diagonalterms} allows us to obtain $b_{N_\Delta - 1 - \ell}$ from $c_\ell$
and $c_{N_\Delta - 1 - \ell}$ from $b_\ell$, so 
we only compute one of both. In practice we make an educated guess for what we think will be the easiest option,
based on the dimensions of the spaces involved. Moreover, using Hering and Schenck's Theorem~\ref{heringschenckthm} we find that
$c_\ell$ vanishes as soon as $\ell \geq N_\Delta + 1 - | \partial \Delta \cap \ZZ^2 |$. For this reason
the computation of $b_1, \ldots, b_{| \partial \Delta \cap \ZZ^2 | - 2}$ can be omitted,
which is particularly interesting in the case of the Veronese polygons $d\Sigma$, which have many lattice points on the boundary.

\begin{remark} \label{remark_each_bidegree} 
From the proof of Lemma~\ref{diagonalterms} 
we can extract the formula
\begin{equation} \label{diagonaltermsgraded}
 b_{\ell,(a,b)} - c_{N_\Delta - 1 - \ell, (a,b)} =
 \sum_{j=0}^{\ell+1}(-1)^{j+1}\dim\left(\wedgepow{\ell+1-j} V_\Delta \otimes V_{j\Delta} \right)_{(a,b)}
\end{equation}
for each bidegree $(a,b) \in \mathbb{Z}^2$ and each $\ell = 1, \dots, N_\Delta - 2$. 
Here the subscript on the right hand side indicates
that we consider the subspace of elements having bidegree $(a,b)$.
As explained in Section~\ref{computingdimensions},
we can easily compute the dimensions of the spaces on the right hand side in practice.
Together with \eqref{dualgradingformulas} this allows one to obtain the bigraded parts of the entire Betti table, using essentially the same method.
As an illustration, bigraded versions of some of the data gathered in Appendix~\ref{appendix_data} 
have been made available on \url{http://sage.ugent.be/www/jdemeyer/betti/}.
\end{remark}

We use the material from Section~\ref{section_alexandertruck} to 
reduce the dimensions further. As soon as we are dealing with an $n$-gon with $n \geq 5$, then
by Theorem~\ref{thm-remove} we can remove one lattice point only. In the case of a quadrilateral 
we can remove two opposite vertices. In the case of a triangle we can remove its three vertices. 
For simple computations we just make a random amenable choice.
For larger computations it makes sense to spend a little time on
optimizing the point(s) to be removed, by computing the dimensions of the resulting quotient spaces.

\begin{remark}
As we have mentioned before, from a practical point of view the effect of removing lattice points is somewhat unpredictable. In certain cases we even observed that, although the resulting
matrices are of considerably lower dimension, computing the rank takes more time. We currently have no explanation for this.
\end{remark}

Another useful optimization is to take into account symmetries of $\Delta$,
which naturally induce symmetries of multiples of $\Delta$ and $\Delta\interior$.
For example for $b_\ell$, consider a symmetry $\psi \in \AGL_2(\ZZ)$ of $(\ell+1) \Delta$
and let $(a,b)$ be a bidegree.
Then $b_{\ell, (a,b)} = b_{\ell, \psi(a,b)}$.
The analogous remark holds for $c_\ell$, using symmetries $\psi$ of $(\ell-1) \Delta+\Delta^{(1)}$.

A final speed-up comes from computing in finite characteristic, thereby avoiding inflation of coefficients
when doing rank computations. We believe that this does not affect the outcome, even when computing modulo very small primes such as $2$, but we
have no proof of this fact. Therefore this speed-up comes at the cost of ending up with conjectural graded Betti tables. However recall
from Remark~\ref{remark_semicontinuity} that the graded Betti numbers can never decrease, so the zero entries are rigorous (and 
because of Lemma~\ref{diagonalterms}
the other entry on the corresponding antidiagonal is rigorous as well).

\subsubsection*{Writing down the matrices}

The maps we need to deal with are of the form
\begin{equation}\label{compute-map}
    \wedgepow{p} V_A \otimes V_B
    \stackrel{\delta}{\longrightarrow}
    \wedgepow{p-1} V_A \otimes V_C,
\end{equation}
where $A$, $B$ and $C$ are finite sets of lattice points and $\delta$ is as in \eqref{delta},
subject to the additional rule mentioned in Remark~\ref{additionalrule}.
For a given bidegree $(a,b)$, as a basis of the left hand side of \eqref{compute-map} 
we make the obvious choice
\begin{multline*}
    \{ \, x^{i_1} y^{j_1} \wedge \ldots \wedge x^{i_p} y^{j_p} \otimes x^{i'} y^{j'} \, | \, (i',j') = (a,b) - (i_1, j_1) - \ldots - (i_p, j_p) \text{ and }  \\
        \{(i_1, j_1), \ldots, (i_p, j_p)\} \subseteq A \text{ and } (i',j') \in B \},
\end{multline*}
where $\{(i_1, j_1), \ldots, (i_p, j_p)\}$ runs over all $p$-element subsets of $A$.
In the implementation, we equip $A$ with a total order $<$ and take subsets
such that $(i_1, j_1) < \ldots < (i_p, j_p)$.
We do not need to store the part $x^{i'} y^{j'}$ since that
is completely determined by the rest (for a fixed bidegree).
We use the analogous basis for the right hand side of \eqref{compute-map}.
We then compute the transformation matrix corresponding to the map $\delta$
in a given bidegree, and determine its rank.

Note that the resulting matrix is very sparse: it has at most $p$ non-zero entries in every column,
while the non-zero entries are $1$ or $-1$.
Therefore we use a sparse data structure to store this matrix.

\subsubsection*{Implementation}

We have implemented all this in Python and Cython,
using SageMath~\cite{sagemath} with LinBox~\cite{linbox} for the linear algebra.
In principle the algorithm should 
work equally fine in characteristic zero (at the cost of some efficiency) but for technical reasons
our current implementation does not support this.
For the implementation details we refer to the programming code,
which is made available at \url{https://github.com/jdemeyer/toricbetti}.

\subsection{Computing the dimensions of the spaces}\label{computingdimensions}
Given finite subsets $A, B \subseteq \ZZ^2$, computing the dimension of the space $\wedgepow{p} V_A \otimes V_B$ in each bidegree
can be done efficiently without explicitly constructing a basis.
These dimensions determine the sizes of the matrices involved.
Knowing this size allows to estimate the amount of time and memory
needed to compute the rank. We use this to decide whether to compute $b_\ell$ or $c_{N_\Delta - 1 - \ell}$,
and which point(s) we remove when applying the material from Section~\ref{section_alexandertruck}.

Namely, consider the generating function (which is actually a polynomial)
\begin{equation}
    f_{A}(X,Y,T) = \prod_{(i,j) \in A}(1 + X^i Y^j T).
\end{equation}
Then the coefficient of $X^a Y^b T^p$ is the dimension of the component
in bidegree $(a,b)$ of $\wedgepow{p} V_A$.
The generating function for $\wedgepow{p} V_A \otimes V_B$ then becomes
\begin{equation}
    f_{A,B}(X,Y,T) = \prod_{(i,j) \in A}(1 + X^i Y^j T) \cdot \sum_{(i,j) \in B} X^i Y^j.
\end{equation}
If we are only interested in a fixed $p$, we can compute modulo $T^{p+1}$,
throwing away all higher-order terms in $T$.

\subsection{Applications} \label{section_application}

As a first application we have verified Conjecture~\ref{Kp1conjecture}
for all lattice polygons containing at most $32$ lattice points with at least one lattice point in the interior 
(namely we used the list of polygons from \cite{movingout} and took those polygons for which $N_\Delta \leq 32$).
There are $583\,095$ such polygons; the maximal lattice width that occurs is $8$.
Apart from the ten exceptional polygons
$3 \Sigma, \ldots, 6 \Sigma, \Upsilon_2, \ldots, \Upsilon_6$ and $2 \Upsilon$,
we verified that the entry $b_{N_\Delta - \lw(\Delta) - 1}$ indeed equals zero. In the exceptional
cases, whose graded Betti tables are gathered in Appendix~\ref{appendix_data}, we found that $b_{N_\Delta - \lw(\Delta)}$ equals zero.
Together with Theorem~\ref{Kp1conjectureoneinequality} this proves that Conjecture~\ref{Kp1conjecture} is satisfied for each of these lattice polygons.
The computation was carried out modulo $40\,009$ and
took $1006$ CPU core-days on an Intel Xeon E5-2680~v3.

As a second application we have computed the graded Betti table of the $6$-fold Veronese surface $X_{6\Sigma}$, which can be found in Appendix~\ref{appendix_data}.
Currently the computation was done in finite characteristic only (again $40\,009$) and therefore some of the non-zero entries are conjectural.
The computation took $12$ CPU core-days on an IBM POWER8.
This new data leads to the guesses stated in Conjecture~\ref{veronese_conjecture}, predicting certain entries
of the graded Betti table of $X_{d\Sigma} = \nu_d(\PP^2)$ for arbitrary $d \geq 2$.
\begin{itemize}
\item The first guess states that the last non-zero entry on the row $q = 1$ is given by $d^3(d^2-1)/8$. This is true for $d=2, 3,4,5$ and has been verified in characteristic
$40\,009$ for $d = 6, 7$. 
\item The second guess is about the first non-zero entry 
on the row $q=2$, which we believe to be
\[ { N_{(d\Sigma)\interior} + 8 \choose 9 }.\]
 Here we have less supporting data: it is true for $d=3,4,5$ and has been verified in characteristic $40\,009$ for $d=6$. On the other hand our guess naturally fits 
 within the more widely applicable formula
\[ {  N_{\Delta^{(1)}} - 1 + \left| \{ \, v \in \ZZ^2 \setminus \{ (0,0) \} \, | \, \Delta^{(1)} + v \subseteq \Delta \,  \} \right| \choose N_{\Delta^{(1)}} - 1 }, \]
which we have verified for a large number of small polygons. 
It was discovered and proven to be a lower bound by the fourth author, in the framework
of his Ph.D.\ research; we refer to his upcoming thesis for a proof.
\end{itemize}

\appendix
\section{Some explicit graded Betti tables} \label{appendix_data}

This appendix contains the graded Betti tables of $X_\Delta \subseteq \PP^{N_\Delta - 1}$ for the instances of $\Delta$
that are the most relevant to this paper. The largest of these Betti tables were computed
using the algorithm described in Section~\ref{section_computing}. Because these computations were carried out modulo $40\,009$
the resulting tables are conjectural, except for the zero entries and the entries on the corresponding antidiagonal.
The smaller Betti tables have been verified independently in characteristic zero using the Magma intrinsic~\cite{magma}, along the lines of~\cite[\S2]{canonical}.
For the sake of clarity, we have indicated the conjectural entries by an asterisk. The question marks `???' mean that the corresponding entry has not been computed.

\newcommand{\tableheader}[2]{#1 \mbox{\hspace{0.5em}($N_\Delta=#2$)}:}

\begin{flalign*}
& \tableheader{\Sigma}{3}
&
& \tableheader{2 \Sigma}{6}
&
& \tableheader{3 \Sigma}{10}
\\
& \footnotesize\begin{array}{r|c}
  & 0 \\
\hline
0&
1 \\
1&
0 \\
2&
0
\end{array}
&
& \footnotesize\begin{array}{r|cccc}
  & 0 & 1 & 2 & 3 \\
\hline
0&
1 & 0 & 0 & 0 \\
1&
0 & 6 & 8 & 3 \\
2&
0 & 0 & 0 & 0
\end{array}
&
& \footnotesize\begin{array}{r|cccccccc}
  & 0 & 1 & 2 & 3 & 4 & 5 & 6 & 7 \\
\hline
0&
1 & 0 & 0 & 0 & 0 & 0 & 0 & 0 \\
1&
0 & 27 & 105 & 189 & 189 & 105 & 27 & 0 \\
2&
0 & 0 & 0 & 0 & 0 & 0 & 0 & 1
\end{array}
\end{flalign*}

\begin{flalign*}
& \tableheader{4 \Sigma}{15}
\\
& \footnotesize\begin{array}{r|ccccccccccccc}
  & 0 & 1 & 2 & 3 & 4 & 5 & 6 & 7 & 8 & 9 & 10 & 11 & 12 \\
\hline
0&
1 & 0 & 0 & 0 & 0 & 0 & 0 & 0 & 0 & 0 & 0 & 0 & 0 \\
1&
0 & 75 & 536 & 1947 & 4488 & 7095 & 7920 & 6237 & 3344 & 1089 & 120 & 0 & 0 \\
2&
0 & 0 & 0 & 0 & 0 & 0 & 0 & 0 & 0 & 0 & 55 & 24 & 3
\end{array}
&&
\end{flalign*}

\begin{flalign*}
& \tableheader{5 \Sigma}{21}
\\
& \footnotesize\begin{array}{r|cccccccccl}
  & 0 & 1 & 2 & 3 & 4 & 5 & 6 & 7 & 8 & \cdots \\
\hline
0&
1 & 0 & 0 & 0 & 0 & 0 & 0 & 0 & 0 \\
1&
0 & 165 & 1830 & 10710 & 41616 & 117300 & 250920 & 417690 & 548080 & \cdots \\
2&
0 & 0 & 0 & 0 & 0 & 0 & 0 & 0 & 0
\end{array}
\\
& \footnotesize\begin{array}{r|lcccccccccc}
  & \cdots & 9 & 10 & 11 & 12 & 13 & 14 & 15 & 16 & 17 & 18 \\
\hline
0&&
0 & 0 & 0 & 0 & 0 & 0 & 0 & 0 & 0 & 0 \\
1&\cdots&
568854 & 464100 & 291720 & 134640 & 39780 & 4858 & 375 & 0 & 0 & 0 \\
2&&
0 & 0 & 0 & 0 & 2002 & 4200 & 2160 & 595 & 90 & 6
\end{array}
&&
\end{flalign*}

\begin{flalign*}
& \tableheader{6 \Sigma}{28}
\\
& \footnotesize\begin{array}{r|cccccccccl}
  & 0 & 1 & 2 & 3 & 4 & 5 & 6 & 7 & 8 & \cdots \\
\hline
0&
1 & 0 & 0 & 0 & 0 & 0 & 0 & 0 & 0 \\
1&
0 & 315 & 4950 &  41850 & 240120 & 1024650 & 3415500 & 9164925 & 20189400 & \cdots \\
2&
0 & 0 & 0 & 0 & 0 & 0 & 0 & 0 & 0
\end{array}
\\
& \footnotesize\begin{array}{r|lcccccccl}
  & \cdots & 9 & 10 & 11 & 12 & 13 & 14 & 15 & \cdots \\
\hline
0&&
0 & 0 & 0 & 0 & 0 & 0 & 0 \\
1&\cdots&
36989865 & 56831850 & 73547100 & 80233200 & 73547100 & 56163240 & 35102025 & \cdots \\
2&&
0 & 0 & 0 & 0 & 0 & 0 & 0
\end{array}
\\
& \footnotesize\begin{array}{r|lcccccccccc}
  & \cdots & 16 & 17 & 18 & 19 & 20 & 21 & 22 & 23 & 24 & 25 \\
\hline
0&&
0 & 0 & 0 & 0 & 0 & 0 & 0 & 0 & 0 & 0 \\
1&\cdots&
17305200 & 6177545^\ast & 1256310^\ast & 160398^\ast & 17890^\ast & 945^\ast & 0 & 0 & 0 & 0 \\
2&&
48620^\ast & 231660^\ast & 593028^\ast & 473290^\ast & 218295^\ast & 69300 & 15525 & 2376 & 225 & 10
\end{array}
&&
\end{flalign*}

\begin{flalign*}
& \tableheader{7 \Sigma}{36}
\\
& \footnotesize\begin{array}{r|lcccccccc}
  & \cdots & 26 & 27 & 28 & 29 & 30 & 31 & 32 & 33 \\
\hline
0&&
0 & 0 & 0 & 0 & 0 & 0 & 0 & 0 \\
1&\cdots&
??? & 53352^\ast & 2058^\ast & 0 & 0 & 0 & 0 & 0 \\
2&&
27821664^\ast & 8824410^\ast & 2215136 & 434280 & 64449 & 6832 & 462 & 15
\end{array}
&&
\end{flalign*}

\begin{flalign*}
& \tableheader{\Upsilon = \Upsilon_1}{4}
&
& \tableheader{2 \Upsilon}{10}
\\
& \footnotesize\begin{array}{r|cc}
  & 0 & 1 \\
\hline
0&
1 & 0 \\
1&
0 & 0 \\
2&
0 & 1
\end{array}
&
& \footnotesize\begin{array}{r|cccccccc}
  & 0 & 1 & 2 & 3 & 4 & 5 & 6 & 7 \\
\hline
0&
1 & 0 & 0 & 0 & 0 & 0 & 0 & 0 \\
1&
0 & 24 & 84 & 126 & 84 & 20 & 0 & 0 \\
2&
0 & 0 & 0 & 0 & 20 & 36 & 21 & 4
\end{array}
&&
\end{flalign*}

\begin{flalign*}
& \tableheader{\Upsilon_2}{7}
&
& \tableheader{\Upsilon_3}{11}
\\
& \footnotesize\begin{array}{r|ccccc}
  & 0 & 1 & 2 & 3 & 4 \\
\hline
0&
1 & 0 & 0 & 0 & 0 \\
1&
0 & 7 & 8 & 3 & 0 \\
2&
0 & 0 & 6 & 8 & 3
\end{array}
&
& \footnotesize\begin{array}{r|ccccccccc}
  & 0 & 1 & 2 & 3 & 4 & 5 & 6 & 7 & 8 \\
\hline
0&
1 & 0 & 0 & 0 & 0 & 0 & 0 & 0 & 0 \\
1&
0 & 30 & 120 & 210 & 189 & 105 & 27 & 0 & 0 \\
2&
0 & 0 & 0 & 21 & 105 & 147 & 105 & 40 & 6
\end{array}
&&
\end{flalign*}

\begin{flalign*}
& \tableheader{\Upsilon_4}{16}
\\
& \footnotesize\begin{array}{r|cccccccccccccc}
  & 0 & 1 & 2 & 3 & 4 & 5 & 6 & 7 & 8 & 9 & 10 & 11 & 12 & 13 \\
\hline
0&
1 & 0 & 0 & 0 & 0 & 0 & 0 & 0 & 0 & 0 & 0 & 0 & 0 & 0 \\
1&
0 & 81 & 598 & 2223 & 5148 & 7920 & 8172 & 6237 & 3344 & 1089 & 120 & 0 & 0 & 0 \\
2&
0 & 0 & 0 & 0 & 55 & 450 & 2376 & 4488 & 4950 & 3630 & 1859 & 612 & 117 & 10 \\
\end{array}
&&
\end{flalign*}

\begin{flalign*}
& \tableheader{\Upsilon_5}{22}
\\
& \footnotesize\begin{array}{r|ccccccccccl}
  & 0 & 1 & 2 & 3 & 4 & 5 & 6 & 7 & 8 & 9 & \cdots \\
\hline
0&
1 & 0 & 0 & 0 & 0 & 0 & 0 & 0 & 0 & 0 \\
1&
0 & 175 & 1995 & 11970 & 47481 & 135660 & 290820^\ast & 476385^\ast & 597415^\ast & 581724^\ast & \cdots \\
2&
0 & 0 & 0 & 0 & 0 & 120^\ast & 1575^\ast & 9555^\ast & 52650^\ast & 172172^\ast
\end{array}
\\
& \footnotesize\begin{array}{r|lcccccccccc}
  & \cdots & 10 & 11 & 12 & 13 & 14 & 15 & 16 & 17 & 18 & 19 \\
\hline
0&&
0 & 0 & 0 & 0 & 0 & 0 & 0 & 0 & 0 & 0 \\
1&\cdots&
466102^\ast & 291720^\ast & 134640^\ast & 39780^\ast & 4858^\ast & 375^\ast & 0 & 0 & 0 & 0 \\
2&&
291720^\ast & 338130^\ast & 291720^\ast & 194782^\ast & 102120^\ast & 39900 & 11305 & 2205 & 266 & 15
\end{array}
&&
\end{flalign*}

\begin{flalign*}
& \tableheader{\Upsilon_6}{29}
\\
& \footnotesize\begin{array}{r|lccccccccc}
  & \cdots & 18 & 19 & 20 & 21 & 22 & 23 & 24 & 25 & 26 \\
\hline
0&&
0 & 0 & 0 & 0 & 0 & 0 & 0 & 0 & 0 \\
1&\cdots&
??? & 160398^\ast & 17890^\ast & 945^\ast & 0 & 0 & 0 & 0 & 0 \\
2&&
16095603^\ast & 7911490^\ast & 3140445^\ast & 995280 & 246675 & 46176 & 6150 & 520 & 21
\end{array}
&&
\end{flalign*}

\footnotesize

\vspace{5mm}
\noindent \textsc{Laboratoire Paul Painlev\'e, Universit\'e de Lille-1}\\
\noindent \textsc{Cit\'e Scientifique, 59655 Villeneuve d'Ascq Cedex, France}\\
\vspace{-0.4cm}

\noindent \textsc{Departement Elektrotechniek, KU Leuven and imec}\\
\noindent \textsc{Kasteelpark Arenberg 10/2452, 3001 Leuven, Belgium}\\
\vspace{-0.4cm}

\noindent \emph{E-mail address:} \href{mailto:wouter.castryck@gmail.com}{wouter.castryck@gmail.com}\\

\noindent \textsc{Departement Wiskunde, KU Leuven}\\
\noindent \textsc{Celestijnenlaan 200B, 3001 Leuven, Belgium}\\
\vspace{-0.4cm}

\noindent \emph{E-mail address:} \href{mailto:filip.cools@wis.kuleuven.be}{filip.cools@wis.kuleuven.be}\\

\noindent \textsc{Vakgroep Wiskunde, Universiteit Gent}\\
\noindent \textsc{Krijgslaan 281, 9000 Gent, Belgium}\\
\vspace{-0.4cm}

\noindent \textsc{Laboratoire de Recherche en Informatique, Universit\'e Paris-Sud}\\
\noindent \textsc{B\^at.\ 650 Ada Lovelace, F-91405 Orsay Cedex, France}\\
\vspace{-0.4cm}

\noindent \emph{E-mail address:} \href{mailto:jdemeyer@cage.ugent.be}{jdemeyer@cage.ugent.be}\\

\noindent \textsc{Departement Wiskunde, KU Leuven}\\
\noindent \textsc{Celestijnenlaan 200B, 3001 Leuven, Belgium}\\
\vspace{-0.4cm}

\noindent \emph{E-mail address:} \href{mailto:alexander.lemmens@wis.kuleuven.be}{alexander.lemmens@wis.kuleuven.be}\\

\end{document}